\newtheorem{proposition}{Proposition}[section]
\newtheorem{lemma}[proposition]{Lemma}
\newtheorem{convention}[proposition]{Convention}
\newtheorem{corollary}[proposition]{Corollary}
\newtheorem{theorem}[proposition]{Theorem}
\newtheorem{remark}[proposition]{Remark}
\theoremstyle{definition}
\newcommand{\selabel}[1]{\label{se:#1}}
\newcommand{\seref}[1]{Section~\ref{se:#1}}
\def\<{\leqslant}
\def\>{\geqslant}
\def\a{\alpha}
\def\b{\beta}
\def\d{\delta}
\def\g{\gamma}
\def\om{\omega}
\def\ol{\overline}
\def\e{\varepsilon}
\def\oo{\infty}
\def\l{\lambda}
\def\s{\sigma}
\def\ti{\times}
\def\ot{\otimes}
\def\ra{\rightarrow}
\date{}
\begin{document}
\title{Representations of Hopf-Ore extensions of group algebras}
\author{Hua Sun}
\address{School of Mathematical Science, Yangzhou University,
	Yangzhou 225002, China}
\email{huasun@yzu.edu.cn}
\author{Hui-Xiang Chen}
\address{School of Mathematical Science, Yangzhou University,
	Yangzhou 225002, China}
\email{hxchen@yzu.edu.cn}
\author{Yinhuo Zhang}
\address{Department of Mathematics $\&$  Statistics, University of Hasselt, Universitaire Campus, 3590 Diepeenbeek, Belgium}
\email{yinhuo.zhang@uhasselt.be}
\thanks{2010 {\it Mathematics Subject Classification}. 16E05, 16G99, 16T99}
\keywords{Hopf-Ore extension, simple module, indecomposable module, dihedral group, Green ring}
\begin{abstract}
In this paper, we study the representations of the Hopf-Ore extensions $kG(\chi^{-1}, a, 0)$ of group algebra $kG$, where $k$ is an algebraically closed field. We classify all finite dimensional simple $kG(\chi^{-1}, a, 0)$-modules under the assumption $|\chi|=\infty$ and $|\chi|=|\chi(a)|<\infty$ respectively, and all finite dimensional indecomposable $kG(\chi^{-1}, a, 0)$-modules under the assumption that $kG$ is finite dimensional and semisimple, and $|\chi|=|\chi(a)|$. Moreover, we investigate the decomposition rules for the tensor product modules over $kG(\chi^{-1}, a, 0)$ when char$(k)$=0. Finally, we consider the representations of some Hopf-Ore extension of the dihedral group algebra $kD_n$, where $n=2m$, $m>1$ odd, and char$(k)$=0. The Grothendieck ring and the Green ring of the Hopf-Ore extension are described  respectively in terms of  generators and relations.
\end{abstract}
\maketitle

\section{\bf Introduction}\selabel{1}

During the past years, the classification of Hopf algebras has made great progress.  Andruskiewitsch-Schneider and Angiono-Garcia Iglesias \cite{AndSch2010, AngGar} classified the  finite dimensional pointed Hopf algebras over an algebraically closed field of characteristic zero such that their coradicals are commutative. Beattie et al \cite{MS1, MS2} constructed many pointed Hopf algebras by means of Ore extensions, and answered the tenth Kaplansky's conjecture in the negative. Panov \cite{A.N} introduced Hopf-Ore extensions, and classified the Hopf-Ore extensions of group algebras and the enveloping algebras of Lie algebras. Krop and Radford \cite{LKD} defined the rank of a Hopf algebra to measure the complexity of the Hopf algebras $H$ generated by $H_1$,  and showed that a finite dimensional  rank one pointed Hopf algebra over an algebraically closed field $k$ with char$(k)$=0 is isomorphic to a quotient of a Hopf-Ore extension of its coradical. Scherotzke \cite{SS} proved such a result for the case of char$(k)=p>0$. Wang et al \cite{WangYouChen} generalized the result to the case that $k$ is an arbitrary field. Brown et al \cite{BrOHZhangZhang} studied the connected Hopf algebras and iterated Ore extensions. You et al \cite{YouWangChen} studied generalized Hopf-Ore extension, and classified the generalized Hopf-Ore extensions of the enveloping algebras of some Lie algebras. Zhou et al \cite{ZhouShenLu} proved that every connected graded Hopf algebra with finite GK-dimension over a field $k$ of characteristic zero is an iterated Ore extensions of $k$.

In \cite{A.N}, Panov proved that every Hopf-Ore extension $kG[x; \tau, \d]$ of a group algebra $kG$ is of the form $kG(\chi,a,\delta)$, where $a$ is a central element of the group $G$ and $\chi$ is a linear character of $G$ over the ground field $k$.
If $\chi(a)\neq 1$ then one can assume $\d=0$ by replacing the variable $x$ with $x-\g(1-a)$ for some scalar $\g\in k$, i.e. $kG(\chi, a, \d)\cong kG(\chi, a,0)$, see  \cite{WangYouChen}.  Wang et al  \cite{WangYouChen} also studied the representations of $kG(\chi^{-1}, a, 0)$ and its rank one quotient Hopf algebra $kG(\chi^{-1}, a, 0)/I$. They constructed  finite dimensional indecomposable weight modules over $kG(\chi^{-1}, a, 0)$ and $kG(\chi^{-1}, a, 0)/I$ and classified them. It was shown that there is a simple weight $kG(\chi^{-1}, a, 0)/I$-module $M$ with dim$(M)>1$ only if $|\chi|=|\chi(a)|<\infty$. It is well-known that the finite dimensional representation category mod$H$ of a Hopf algebra $H$ is a tensor category. In \cite{H.H, H.C}, we investigated the decomposition rules for the tensor products of finite dimensional indecomposable weight $kG(\chi^{-1}, a, 0)$-modules and described the structure of the Green ring of the category of finite dimensional weight modules over $kG(\chi^{-1}, a, 0)$ for the case that $k$ is an algebraically closed field of characteristic zero.  This gives rise to the natural questions: How to classify the finite dimensional indecomposable modules over $kG(\chi^{-1}, a, 0)$? How to describe the Green ring of $kG(\chi^{-1}, a, 0)$?

In this paper, we study the finite dimensional representations of $H=kG(\chi^{-1}, a, 0)$,
a Hopf-Ore extension of a group algebra $kG$, where $k$ is an algebraically closed field. The paper is organized as follows. In Section 2, we recall some notions and notations including Grothendieck ring and Green ring, and the Hopf algebra structure of $H$.  Section 3  deals with the finite dimensional irreducible representations of $H$.  We describe and classify the finite dimensional simple modules over $H$ in  two cases:  $|\chi|=\infty$ and $|\chi|=|\chi(a)|<\infty$. In Section 4, we construct and classify the finite dimensional indecomposable $H$-modules under the assumptions that the group algebra $kG$ is  semisimple and $|\chi|=|\chi(a)|$. In Section 5,  we investigate the decomposition rules for tensor product modules over $H$  under the assumptions: ${\rm char}(k)=0$, $|G|<\infty$ and  $|\chi(a)|=|\chi|$. In Section 6,  we apply the obtained results to some Hopf-Ore extension of the group algebra $kD_n$, where $D_n$ is the dihedral group of order $2n$, $n=2m$ with $m>1$  odd, and char$(k)$=0. The Grothedieck ring and the Green ring of the Hopf-Ore extension are described by means of  generators and relations respectively.

\section{\bf Preliminaries}\selabel{2}

Throughout, let $k$ be an algebraically closed field. Unless
otherwise stated, all algebras and Hopf algebras are
defined over $k$; all modules are finite dimensional and left modules;
dim and $\otimes$ denote ${\rm dim}_k$ and $\otimes_k$,
respectively. We refer to \cite{ARS, Ka, Mon} for the basic concepts and notations  of Hopf algebras or those  in the representation theory.  We use $\varepsilon$, $\Delta$ and $S$ to denote the counit,
comultiplication and antipode of a Hopf algebra respectively. Let $k^{\ti}=k\setminus\{0\}$.
For a group $G$, let $\hat{G}$ denote the group of the linear characters of $G$ over $k$, and let $Z(G)$ denote the center of $G$. Let $\mathbb Z$ denote all integers.  $\mathbb N$  stands for  all nonnegative integers, and $\mathbb{N}^+$  stands for all positive integers.  Denote by $\sharp X$  the number of the elements in a set $X$.

\subsection{Grothendieck ring and Green ring}\selabel{2.1}
~~

For an algebra $A$, we denote by mod$A$ the category of finite dimensional $A$-modules.
For a module $M\in{\rm mod}A$ and an element  $n\in\mathbb N$, let $nM$  be the
direct sum of $n$ copies of $M$. Thus  $nM=0$ if $n=0$.

The {\it Grothendieck ring} $G_0(H)$ of a Hopf algebra $H$ is defined to be the abelian group generated by the
isomorphism classes $[V]$ of $V$ in mod$H$ modulo the relations $[V]=[U]+[W]$ for all short exact sequences
$0\ra U\ra V\ra W\ra 0$ in ${\rm mod}H$. The multiplication of $G_0(H)$ is defined by $[U][V]=[U\ot V]$, the tensor product of $H$-modules.
The ring  $G_0(H)$ is  associative  and has  identity. $G_0(H)$ has a $\mathbb Z$-basis $\{[V_i]|i\in I\}$, where $\{V_i|i\in I\}$
are all non-isomorphic simple modules. Moreover, for each $V\in $ mod$A$, we have
$[V]=\sum_i[V:V_i][V_i]$ in $G_0(H)$, where $[V:V_i]$ denotes the multiplicity of $V_i$ in a composition series of $V$.

The {\it Green ring} $r(H)$ of a Hopf algebra $H$ is defined to be the abelian group generated by the
isomorphism classes $[V]$ of $V$ in mod$H$
modulo the relations $[U\oplus V]=[U]+[V]$, $U, V\in{\rm mod}H$. The multiplication of $r(H)$
is determined by $[U][V]=[U\ot V]$, the tensor product of $H$-modules. Then $r(H)$ is an associative ring
with identity.
Notice that $r(H)$ is a free abelian group with a $\mathbb Z$-basis
$\{[V]|V\in{\rm ind}(H)\}$, where ${\rm ind}(H)$ denotes the category
of indecomposable objects in mod$H$.

Note that there is a canonical ring epimorphism $r(H)\ra G_0(H)$, $[V]\mapsto [V]$. If $H$ is a finite dimensional
semisimple Hopf algebra, then the epimorphism is a ring isomorphism, i.e., $r(H)=G_0(H)$.

\subsection{Hopf-Ore extensions of a group algebra $kG$}\selabel{1.2}

Let $G$ be a group and $a\in Z(G)$. Let $\chi\in\hat{G}$ with $\chi(a)\neq1$ and let $q=\chi(a)$.
The Hopf-Ore extension $H=kG(\chi^{-1}, a, 0)$ of the group
algebra $kG$ can be described as follows. $H$ is generated, as an algebra, by $G$ and $x$ subject to
the relations $xg=\chi^{-1}(g)gx$ for all $g\in G$. The coalgebra structure and the antipode are given
by
$$\begin{array}{lll}
\Delta (x)=x\otimes a+1\ot x,& \varepsilon(x)=0,& S(x)=-xa^{-1},\\
\Delta(g)=g\otimes g,& \varepsilon(g)=1,& S(g)=g^{-1},\\
\end{array}$$
where $g\in G$.
$H$ has a $k$-basis $\{gx^i\mid g\in G,i\in\mathbb{N}\}$.

\section{\bf Simple modules}\selabel{3}

In this and the next two sections,  we fix  $H=kG(\chi^{-1}, a, 0)$,  a Hopf-Ore extension of a group algebra $kG$ as defined in the previous section. Let $q=\chi(a)$.

Let $V$ be a $kG$-module. Then $V$ becomes an $H$-module by setting $x\cdot v=0$, $v\in V$ (see \cite[Page 812]{WangYouChen}). Thus, one obtains an embedding functor $F: {\rm mod}kG\ra{\rm mod}H$. Obviously, $F$ is a tensor functor. Hence
mod$kG$ can be regarded as a tensor subcategory of mod$H$.

Let $\{ V_i|i \in I\}$ be all non-isomorphic simple $kG$-module.
For any $i\in I$, $V_i$ becomes a simple $H$-module as above.
For any $\lambda\in\hat{G}$, there is a one-dimensional $H$-module $V_{\lambda}$ defined by $g\cdot v=\l(g)v$ and $x\cdot v=0$ for any $g\in G$ and $v\in V_{\l}$ (see \cite{WangYouChen}). $V_{\l}$ is also a simple $kG$-module. Hence we may regard $\hat{G}\subseteq I$. Thus, $V_{\e}$ is the trivial $H$-module, where $\e$ is the identity of the group $\hat{G}$. One can easily check that $V_i\ot V_{\l}\cong V_{\l}\ot V_i$ is a simple module as well for any $i\in I$ and $\l\in\hat{G}$. Hence there exists a permutation $\s$ of $I$ such that $V_{\chi}\ot V_i\cong V_{\s(i)}$, $i\in I$. Consequently, $V_{\s^t(i)}\cong V_{\chi^t}\ot V_i$, $t\in\mathbb Z$. Define a binary relation $\sim$ on $I$ as follows: $i\sim j$ if $i=\s^t(j)$, or equivalently, $V_i=V_{\sigma^t(j)}$, for some $t\in\mathbb Z$, where $i,j\in I$. Obviously, $\sim$ is an equivalent relation. Denote by $[i]$ the equivalence class containing $i$. Let  $I_0$ be the set of all equivalence classes of $I$ with respect to $\sim$.

Clearly, if $|\chi|=s<\infty$ then $\s^s(i)=i$ for any $i\in I$. Conversely, we have the following lemma.

\begin{lemma}\label{3.1}
If $\s^t(i)=i$ for some $i\in I$ and $t\in\mathbb Z$ with $t\neq 0$, then $|\chi|<\infty$.
\end{lemma}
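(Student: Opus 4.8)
The plan is to unwind the definition of the permutation $\s$ in terms of the linear character $\chi$ and then use the fact that $V_\e$ (the trivial module) is one of the simple modules $V_i$ lying in $I$. First I would recall that $V_{\chi^t}\ot V_i\cong V_{\s^t(i)}$ for all $t\in\mathbb Z$ and $i\in I$, so the hypothesis $\s^t(i)=i$ says precisely that $V_{\chi^t}\ot V_i\cong V_i$ as $H$-modules (equivalently, as $kG$-modules, since the $H$-action on these simples is determined by the $kG$-action together with $x$ acting as $0$).

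The key step is to pass from $V_{\chi^t}\ot V_i\cong V_i$ to a statement purely about characters. Taking (Brauer) characters, or simply comparing the action of a group element $g\in G$, the module $V_{\chi^t}\ot V_i$ has $g$ acting as $\chi^t(g)$ times the action of $g$ on $V_i$; if this is isomorphic to $V_i$ then, comparing traces, $\chi^t(g)\,\mathrm{tr}(g|_{V_i})=\mathrm{tr}(g|_{V_i})$ for every $g\in G$. In particular it suffices to exhibit one $g$ with $\mathrm{tr}(g|_{V_i})\neq 0$ to conclude $\chi^t(g)=1$. The cleanest way to get such a $g$ is to use $g=e$, the identity of $G$: then $\mathrm{tr}(e|_{V_i})=\dim V_i\neq 0$, but $\chi^t(e)=1$ automatically, so that choice is vacuous. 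Instead I would argue as follows: since $V_i$ is a nonzero $kG$-module, there is some $g\in G$ acting nontrivially, but more to the point, the determinant gives a cleaner invariant — from $V_{\chi^t}\ot V_i\cong V_i$ one gets $\chi^t(g)^{\dim V_i}\det(g|_{V_i})=\det(g|_{V_i})$, hence $\chi^t(g)^{\dim V_i}=1$ for all $g\in G$, so $\chi^{t\dim V_i}=\e$ in $\hat G$. Since $t\neq 0$ and $\dim V_i\geq 1$, this forces $|\chi|$ to divide $t\dim V_i$, which is a nonzero integer, so $|\chi|<\infty$.

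The main obstacle is making sure the isomorphism $V_{\chi^t}\ot V_i\cong V_i$ genuinely yields a character (or determinant) identity valid over the algebraically closed field $k$ regardless of characteristic: in positive characteristic ordinary character theory is unavailable, which is exactly why I would rely on the determinant (a one-dimensional invariant that behaves well under tensoring and is an isomorphism invariant of any finite-dimensional module) rather than on traces. One should also double-check the bookkeeping that $V_{\chi^t}\ot V_i$ really is the module on which $g$ acts by $\chi^t(g)(g\cdot-)$; this is immediate from $\Delta(g)=g\ot g$ and the definition of $V_{\chi^t}$. Once the determinant identity $\chi^{t\dim V_i}=\e$ is in hand, the conclusion $|\chi|<\infty$ is immediate since $\chi$ has finite order dividing the nonzero integer $t\dim V_i$. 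Alternatively, one can avoid determinants entirely by noting that an isomorphism $V_{\chi^t}\ot V_i\cong V_i$ of $kG$-modules, iterated, gives $V_{\chi^{nt}}\ot V_i\cong V_i$ for all $n$; but the finitely many isomorphism types of one-dimensional characters appearing (there are at most $\dim\mathrm{End}_{kG}$-many, or simply: the set $\{\chi^{nt}\mid n\in\mathbb Z\}$ must be finite because otherwise the infinitely many distinct characters $V_{\chi^{nt}}$ would give infinitely many pairwise non-isomorphic composition factors of the fixed finite-dimensional object $V_i\ot V_i^\ast$), forcing $|\chi^t|<\infty$ and hence $|\chi|<\infty$. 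I would present the determinant argument as the primary one for its brevity.
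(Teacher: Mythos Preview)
Your proof is correct and takes essentially the same approach as the paper: both extract the identity $\chi^{t\dim V_i}=\e$ by comparing determinants across the isomorphism $V_{\chi^t}\ot V_i\cong V_i$. The paper phrases this by writing the isomorphism as $v\mapsto v_0\ot f(v)$ for a linear automorphism $f$ of $V_i$, deducing $g^{-1}f(gv)=\chi^t(g)f(v)$, and taking determinants of both sides to get $\det(f)=\chi^{t\dim V_i}(g)\det(f)$; you instead compare the determinant of $g$ acting on each side directly, which is the same computation in slightly different clothing.
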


\begin{proof}
Assume $\s^t(i)=i$ for some $i\in I$ and $t\in\mathbb{N}^+$. Then $V_{\chi^t}\ot V_i\cong V_i$. Let $\phi: V_i\ra V_{\chi^t}\ot V_i$ be a $kG$-module isomorphism. Let $0\neq v_0\in V_{\chi^t}$. Since ${\rm dim}(V_{\chi^t})=1$, there exists a linear automorphism $f$ of $V_i$ such that $\phi(v)=v_0\ot f(v)$, $v\in V_i$. From $\phi(g\cdot v)=g\cdot\phi(v)$, one gets $f(g\cdot v)=\chi^t(g)g\cdot f(v)$, and so $g^{-1}\cdot f(g\cdot v)=\chi^t(g)f(v)$, where $g\in G$ and $v\in V_i$. This implies ${\rm det}(f)=\chi^{t{\rm dim}(V_i)}(g){\rm det}(f)$, $g\in G$. It follows that $\chi^{t{\rm dim}(V_i)}(g)=1$ for all $g\in G$, and so $|\chi|<\infty$.
\end{proof}

Let $\langle\chi\rangle$ be the subgroup of $\hat{G}$ generated by $\chi$ and $\hat{G}/\langle\chi\rangle$
the corresponding quotient group. By \cite[Proposition 3.17(a)]{WangYouChen}, one can see that $\hat{G}/\langle\chi\rangle\subseteq I_0$.

\begin{lemma}\label{3.2}
Let $i\in I$ and $l, r\in\mathbb{Z}$. Then $\s^l(i)\neq\s^r(i)$ if $|\chi|=\infty$ and $l\neq r$, or $|q|=s<\infty$ and $s\nmid l-r$.
\end{lemma}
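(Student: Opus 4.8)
The plan is to reduce both assertions to an application of Lemma~\ref{3.1}, using the fact that $\s^l(i)=\s^r(i)$ exactly means $\s^{l-r}(i)=i$ (since $\s$ is a permutation of $I$, hence invertible). So it suffices to analyze when $\s^t(i)=i$ can occur for $t=l-r$.

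First I would dispose of the case $|\chi|=\infty$. Suppose, for contradiction, that $l\neq r$ but $\s^l(i)=\s^r(i)$. Applying $\s^{-r}$ gives $\s^{t}(i)=i$ with $t=l-r\neq 0$. By Lemma~\ref{3.1} this forces $|\chi|<\infty$, a contradiction. Hence $\s^l(i)\neq\s^r(i)$ whenever $l\neq r$.

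Next I would treat the case $|q|=|\chi(a)|=s<\infty$ with $s\nmid l-r$. Again suppose $\s^l(i)=\s^r(i)$, so $\s^t(i)=i$ with $t=l-r$; we may assume $t>0$ (replacing $t$ by $-t$ if necessary, which does not affect divisibility by $s$). As in the proof of Lemma~\ref{3.1}, writing $V_{\chi^t}\ot V_i\cong V_i$ and comparing determinants of the resulting automorphism $f$ of $V_i$ yields $\chi^{t\,{\rm dim}(V_i)}(g)=1$ for all $g\in G$; in particular, evaluating at $g=a$, we get $\chi(a)^{t\,{\rm dim}(V_i)}=q^{t\,{\rm dim}(V_i)}=1$, so $s\mid t\,{\rm dim}(V_i)$. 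This is not yet the desired $s\mid t$, so the extra input I need is a sharper version of the argument that tracks the $x$-action rather than only the $kG$-structure: since $V_i$ is a simple $H$-module with $x$ acting as zero (it lies in the image of the embedding functor $F$), one should instead use the $H$-module isomorphism $V_{\chi^t}\ot V_i\cong V_i$ together with the comultiplication $\Delta(x)=x\ot a+1\ot x$ to pin down the scalar by which $a$ acts. Concretely, I expect that on a simple module one can choose a common eigenvector and read off that the relevant eigenvalue of $a$ on $V_i$, raised to the $t$-th power, must equal $1$; combined with the fact (from Section~3, via \cite[Prop.~3.17]{WangYouChen}) that the eigenvalues of $a$ on the simple modules in a given $\sim$-class are controlled by powers of $q$, this gives $q^t=1$, i.e. $s\mid t$, contradicting $s\nmid l-r$.

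The main obstacle is precisely this last point: upgrading the determinant identity $\chi^{t\,{\rm dim}(V_i)}(g)=1$ to the stronger conclusion $q^t=1$. The determinant trick alone only yields divisibility of $t\,{\rm dim}(V_i)$ by $s$, which is weaker than $s\mid t$ when $\gcd({\rm dim}(V_i),s)>1$. Resolving this requires exploiting that we are comparing $H$-modules (not merely $kG$-modules) and that the action of the central grouplike $a$ on a simple $H$-module is by a single scalar on a suitable generalized eigenspace, so that the isomorphism $V_{\chi^t}\ot V_i\cong V_i$ forces that scalar to be a $t$-th root of unity equal to an appropriate power of $q$. Everything else is bookkeeping with the permutation $\s$ and the already-established Lemma~\ref{3.1}.
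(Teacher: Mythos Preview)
Your treatment of the case $|\chi|=\infty$ is correct and matches the paper's argument exactly.

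For the case $|q|=s<\infty$, you correctly identify that the determinant argument from Lemma~\ref{3.1} only yields $s\mid t\,{\rm dim}(V_i)$, which is too weak. However, your proposed remedy---invoking $H$-module structure and the comultiplication $\Delta(x)=x\ot a+1\ot x$---cannot work: since $x$ acts as zero on both $V_{\chi^t}$ and $V_i$, it also acts as zero on $V_{\chi^t}\ot V_i$, so the $H$-module structure carries no information beyond the $kG$-module structure.

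The paper's fix is much simpler and purely about $kG$-modules. Recall from the proof of Lemma~\ref{3.1} that the automorphism $f$ of $V_i$ satisfies $f(g\cdot v)=\chi^t(g)\,g\cdot f(v)$ for all $g\in G$. Now specialize to $g=a$: one gets $f(a\cdot v)=q^t\,a\cdot f(v)$. Here is the key point you missed: since $a\in Z(G)$ and $V_i$ is a simple $kG$-module, Schur's lemma says $a$ acts on $V_i$ by a scalar $\a\in k^{\ti}$. Substituting $a\cdot v=\a v$ and $a\cdot f(v)=\a f(v)$ gives $\a f(v)=q^t\a f(v)$, hence $q^t=1$ and $s\mid t$, the desired contradiction. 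No determinants, no $H$-structure, no eigenvector-hunting beyond this single application of Schur's lemma.
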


\begin{proof}
If $|\chi|=\infty$ and $l\neq r$, then $\s^{l-r}(i)\neq i$ by Lemma \ref{3.1}, and hence $\s^l(i)\neq\s^r(i)$. Now assume $|q|=s<\infty$ and $s\nmid l-r$. If $\s^l(i)=\s^r(i)$, then $\s^{l-r}(i)=i$. By the proof of Lemma \ref{3.1}, there is a linear automorphism $f$ of $V_i$ such that $f(av)=\chi^{l-r}(a)af(v)=q^{l-r}af(v)$, $v\in V_i$.
Since $a$ is a central element of $G$ and $V_i$ is a simple $kG$-module, there exists an $\a\in k^{\ti}$ such that $av=\a v$ for all $v\in V_i$. Hence $\a f(v)=q^{l-r}\a f(v)$, $v\in V_i$. This implies $q^{l-r}\a=\a$, and hence $q^{l-r}=1$, a contradiction. This completes the proof.
\end{proof}

For any $H$-module $M$, the subspace $M^x=\{m\in M|xm=0\}$ is a submodule of $M$. If $M^x=M$ then $M$ is called $x$-{\it torsion}. If $M^x=0$ then $M$ is called $x$-{\it torsionfree}. Obviously, if $M$ is a simple $H$-module, then $M$ is either $x$-{\it torsion} or $x$-{\it torsionfree}.

\begin{lemma}\label{3.3}
If there exists a nonzero $x$-{\it torsionfree} $H$-module, then $|\chi|<\infty$.
\end{lemma}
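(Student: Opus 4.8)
The plan is to argue by contraposition: assuming $|\chi|=\infty$, I would show every $H$-module is $x$-torsion, so no nonzero $x$-torsionfree module exists. Let $M$ be a nonzero $x$-torsionfree $H$-module; I must derive a contradiction. The key structural fact is the twisted commutation relation $xg=\chi^{-1}(g)gx$, which says that left multiplication by $x$ intertwines the $kG$-action with its twist by $\chi^{-1}$: if $v$ lies in a weight space (or more generally if we track how $G$ acts), then $x\cdot v$ lives in the $\chi^{-1}$-twisted space. Concretely, the map $x\colon M\to M$ is not a $kG$-module map but satisfies $g\cdot(x\cdot v)=\chi(g)\,x\cdot(g\cdot v)$, so $x$ carries any $kG$-submodule isomorphic to a simple $V_i$ into one isomorphic to $V_{\chi}\otimes V_i=V_{\sigma(i)}$.

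First I would reduce to the case where $M$ is a simple $H$-module: if there is a nonzero $x$-torsionfree $H$-module at all, then since $M^x$ is always a submodule, any simple subquotient $N$ of $M$ on which $x$ acts nonzero is $x$-torsionfree (a simple module is $x$-torsion or $x$-torsionfree, as noted just before the lemma), and such an $N$ exists because $M\ne M^x$. So assume $M$ is simple and $x$-torsionfree, hence finite dimensional. Now consider $M$ as a $kG$-module and pick a simple $kG$-submodule $V\cong V_i$ of $M$. Then $x\cdot V$ is a $kG$-submodule of $M$, and by the twisted relation above $x\cdot V$ is a quotient (hence, by simplicity and finite dimension, isomorphic image) of $V_{\sigma(i)}$; iterating, $x^t\cdot V$ is a homomorphic image of $V_{\sigma^t(i)}$ inside $M$.

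The crucial step is then a dimension/linear-independence argument. Since $M$ is $x$-torsionfree, $x$ acts injectively, so $x^t\cdot V\ne 0$ for all $t\ge0$, giving infinitely many nonzero $kG$-submodules $x^t\cdot V\cong V_{\sigma^t(i)}$ of the finite-dimensional module $M$. By \leref{3.2}, when $|\chi|=\infty$ the classes $\sigma^t(i)$ are pairwise distinct for distinct $t$, so the $V_{\sigma^t(i)}$ are pairwise non-isomorphic simple $kG$-modules; distinct simple submodules of a finite-dimensional module span an internal direct sum, forcing $\dim M\ge\sum_{t\ge 0}\dim V_{\sigma^t(i)}=\infty$, a contradiction. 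I expect the main obstacle to be the bookkeeping in this step: one must be careful that $x^t\cdot V$ really is a full copy of the simple $V_{\sigma^t(i)}$ (not a proper quotient — here finite-dimensionality and simplicity of $V_{\sigma^t(i)}$ save us, since a nonzero quotient of a simple module is the module itself) and that these copies are genuinely independent. An alternative, perhaps cleaner, route avoiding the choice of a single simple $kG$-submodule: decompose $M=\bigoplus_{[j]\in I_0}M_{[j]}$ into its $\langle\sigma\rangle$-isotypic $kG$-components; the twisted relation shows $x$ maps $M_{[j]}$ into itself while permuting the isotypic pieces within the orbit, and if $|\chi|=\infty$ each orbit is infinite, so a finite-dimensional $M_{[j]}$ can only involve finitely many members of the orbit, on which an injective $x$ would have to act as a "shift up the orbit" forever — impossible. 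Either way the contradiction is the collision between $|\chi|=\infty$ (infinite $\sigma$-orbits, via \leref{3.2}) and $\dim_k M<\infty$.
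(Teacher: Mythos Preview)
Your argument is essentially the paper's: choose a simple $kG$-submodule $V\cong V_i$ of $M$, note that $x^tV\cong V_{\sigma^t(i)}$ is nonzero for all $t\ge 0$, and use finite-dimensionality to force $\sigma^l(i)=\sigma^n(i)$ for some $l\ne n$, whence $|\chi|<\infty$ by \leref{3.1} (you phrase this contrapositively via \leref{3.2}, but it is the same step). Your preliminary reduction to a simple $H$-module is unnecessary---the paper never makes it, and the rest of your argument does not use simplicity of $M$ as an $H$-module---and your one-line justification for it (``such an $N$ exists because $M\ne M^x$'') is incomplete as stated; you can simply drop that reduction.
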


\begin{proof}
Suppose that $M$ is a nonzero $x$-{\it torsionfree} $H$-module. Let $V$ be a simple $kG$-submodule of $M$. Then $V\cong V_i$ for some $i\in I$. Without loss of generality, we may assume that $V_i$ is a simple $kG$-submodule of $M$. Since $M$ is $x$-{\it torsionfree},  $x^jV_i\neq 0$ for any $j\>1$. It is easy to check that $x^jV_i$ is a $kG$-submodule of $M$ and $x^jV_i\cong V_{\s^j(i)}$. Since $M$ is finite dimensional, there is a positive integer $n$ such that $x^nV_i\subseteq\sum_{j=0}^{n-1}x^jV_i$. Since $V_i, xV_i, \cdots, x^nV_i$ are all simple $kG$-modules, $x^nV_i\cong x^lV_i$ as $kG$-modules for some $0\<l\<n-1$. This implies $V_{\s^n(i)}\cong V_{\s^l(i)}$, and hence $\s^n(i)=\s^l(i)$. By Lemma \ref{3.1}, $|\chi|<\infty$.
\end{proof}

Let $i\in I$. Then one can define a module $M(V_i)=H\ot_{kG} V_i$. Note that $H$ is a free right $kG$-module with a basis $\{x^l|l\>0\}$. Hence
$M(V_i)=\oplus_{l=0}^{\infty}x^l\otimes_{kG}V_i$ as vector spaces.
For any $v\in V_i$, still denote by $v$ the element $1\ot_{kG}v$ of $M(V_i)$ for simplicity. Then we may view  $V_i$ as $1\ot_{kG}V_i$  in $M(V_i)$.

Assume $|\chi|=s<\oo$. Let $i\in I$. For any $\b\in k$ and a monic polynomial $f_n(y)=(y-\b)^n=y^n-\sum_{j=0}^{n-1}\a_jy^j$ with $n\> 1$, let $N^n_{\b}(i)$ be the submodule of $M(V_i)$ generated by $f_n(x^s)M(V_i)$, and define $V_n(i,\b)=M(V_i)/N^n_{\b}(i)$ to be the corresponding quotient module. Since $f(x^s)$ is a central element of $H$, $N^n_{\b}(i)=f_n(x^s)M(V_i)$ and hence dim$V_n(i,\b)$=$ns$dim$V_i$. For any $m\in M(V_i)$, denote still by $m$ the image of $m$ under the canonical epimorphism $M(V_i)\ra V_n(i,\b)$. Then it is easy to see that $V_n (i,\b)$ is generated, as an $H$-module, by $V_i$, and $V_n (i,\b)=\oplus_{j=0}^{ns-1}x^jV_i$ as vector spaces. Moreover, we have
$$x^{ns}v=\sum_{j=0}^{n-1}\a_jx^{js}v,\ v\in V_n(i,\b).$$
If $n=1$, then $x^sv=\b v$ for all $v\in V_1(i,\b)$.  Let $V (i,\b)$ be the module  $V_1(i,\b)$.
Obviously, $V(i, \b)$ is $x$-{\it torsionfree} for any $i\in I$ and $\b\in k^{\ti}$.

\begin{proposition}\label{3.4}
Assume $|\chi|=|q|=s<\oo$. Let $i, j\in I$ and $\a, \b\in k^{\ti}$. Then  $V (i,\b)$ is simple and $V (i,\b)\cong V (j,\a)$ if and only if $[i]=[j]$ and $\b=\a$.
\end{proposition}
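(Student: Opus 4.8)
The plan is to prove the three assertions in turn: simplicity of $V(i,\b)$; that $[i]=[j]$ and $\a=\b$ imply $V(i,\b)\cong V(j,\a)$; and the converse. The common starting point is to analyse $V(i,\b)$ as a module over the subalgebra $kG$. By construction $V(i,\b)=\bigoplus_{l=0}^{s-1}x^lV_i$ as a vector space, with $x^lV_i\cong V_{\s^l(i)}$ as $kG$-modules, and $\s^s={\rm id}$ on $I$ since $|\chi|=s$. For $0\<l\neq r\<s-1$ we have $0<|l-r|<s$, hence $s\nmid l-r$, so Lemma~\ref{3.2} shows that $V_i,xV_i,\dots,x^{s-1}V_i$ are pairwise non-isomorphic simple $kG$-modules. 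Consequently every $kG$-submodule of $V(i,\b)$ is the direct sum of a subcollection of these $s$ simple summands.

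For simplicity, let $N\neq0$ be an $H$-submodule and write $N=\bigoplus_{l\in S}x^lV_i$ with $\emptyset\neq S\subseteq\{0,\dots,s-1\}$. Since $x\cdot x^lV_i=x^{l+1}V_i$ for $0\<l<s-1$, while $x\cdot x^{s-1}V_i=\b V_i=V_i$ because $\b\neq0$, the set $S$ is stable under $l\mapsto l+1\pmod{s}$; being nonempty it equals $\{0,\dots,s-1\}$, so $N=V(i,\b)$ and $V(i,\b)$ is simple. For the ``if'' direction, assume $[i]=[j]$, say $j=\s^t(i)$ with $0\<t\<s-1$ (using $\s^s={\rm id}$), and $\a=\b$. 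The simple $kG$-submodule $x^tV_i\cong V_{\s^t(i)}=V_j$ of $V(i,\b)$ provides a $kG$-isomorphism $\phi\colon V_j\to x^tV_i$, which by the restriction/induction adjunction along $kG\hookrightarrow H$ extends uniquely to a nonzero $H$-module map $\widetilde\phi\colon M(V_j)\to V(i,\b)$. Since $x^s$ is central in $H$ and acts as the scalar $\b=\a$ on $V(i,\b)$, we get $\widetilde\phi((x^s-\a)M(V_j))=(\b-\a)\widetilde\phi(M(V_j))=0$, so $\widetilde\phi$ annihilates $N^1_\a(j)=(x^s-\a)M(V_j)$ and factors through a map $\overline\phi\colon V(j,\a)\to V(i,\b)$, which is nonzero because $\widetilde\phi$ is. As $V(j,\a)$ is simple, $\overline\phi$ is injective, and since $\dim V(j,\a)=s\dim V_j=s\dim V_i=\dim V(i,\b)$ (using $\dim V_{\chi^t}=1$), $\overline\phi$ is an isomorphism.

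For the ``only if'' direction, let $\theta\colon V(j,\a)\to V(i,\b)$ be an $H$-isomorphism. Applying $\theta$ to $x^sv=\a v$ and using $\theta(x^sv)=x^s\theta(v)=\b\theta(v)$ gives $\a\theta(v)=\b\theta(v)$ for all $v$, whence $\a=\b$. Viewing $\theta$ as an isomorphism of $kG$-modules and comparing simple constituents, the sets $\{V_{\s^l(i)}\mid 0\<l\<s-1\}$ and $\{V_{\s^l(j)}\mid 0\<l\<s-1\}$, which by the first paragraph are precisely the $kG$-composition factors of $V(i,\b)$ and $V(j,\a)$, must coincide; in particular $V_j\cong V_{\s^l(i)}$ for some $l$, i.e.\ $j=\s^l(i)$ and $[i]=[j]$.

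The only mildly delicate point is the book-keeping behind ``a $kG$-submodule of $V(i,\b)$ is the sum of a subcollection of the $x^lV_i$'', which rests on those constituents being pairwise non-isomorphic. This is exactly where the hypothesis $|\chi|=|q|=s$ enters, through Lemma~\ref{3.2}; without it the $kG$-constituents of $V(i,\b)$ could repeat, the submodule lattice over $kG$ would no longer be Boolean, and $V(i,\b)$ would in general fail to be simple.
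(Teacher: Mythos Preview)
Your proof is correct and follows essentially the same strategy as the paper's: both arguments hinge on Lemma~\ref{3.2} to see that the $kG$-constituents $x^lV_i\cong V_{\sigma^l(i)}$ are pairwise non-isomorphic, then use the $x$-action to force any nonzero $H$-submodule to be everything, and both obtain $\alpha=\beta$ from the action of $x^s$ and $[i]=[j]$ from the $kG$-composition factors. The only differences are in packaging. For simplicity, you first observe that the $kG$-submodule lattice of $V(i,\beta)$ is Boolean (multiplicity-free semisimple) and then argue via shift-stability of the index set~$S$; the paper instead picks a single simple $kG$-constituent $x^jV_i$ of $N$ and hits it with $x^{s-j}$ to land in $V_i$. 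For the ``if'' direction you invoke the induction--restriction adjunction to produce a map $M(V_j)\to V(i,\beta)$ that factors through $V(j,\alpha)$, while the paper writes down the map $\phi_0(x^lv)=x^l\phi(v)$ by hand and checks it is an $H$-homomorphism; your version is a bit more conceptual, the paper's a bit more explicit, but they are the same map.
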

\begin{proof}
Let $N$ be a nonzero submodule of $V (i,\b)$ and $V$ a simple $kG$-submodule of $N$. Since $V(i,\b)=\oplus_{j=0}^{s-1}x^jV_i$ is $x$-{\it torsionfree}, $x^jV_i$ is a $kG$-submodule of $V(i, \b)$ and $x_jV_i\cong V_{\s^j(i)}$, $0\<j\<s-1$. Then by Lemma \ref{3.2}, $V_i, xV_i, \cdots, x^{s-1}V_i$ are pairwise non-isomorphic simple $kG$-submodules of $V(i,\b)$. It follows that $V=x^jV_i$ for some $0\<j\<s-1$. Thus, $N\supset x^{s-j}V=x^sV_i=\b V_i=V_i$. Since the $H$-module $V(i,\b)$ is generated by $V_i$, $N=V(i,\b)$ and so $V(i,\b)$ is a simple $H$-module.

Assume $V (i,\b)\cong V (j,\a)$. Then dim$V_i$=dim$V_j$. Let $\phi: V (i,\b)\ra V (j,\a)$ be an $H$-module isomorphism. Pick up a nonzero element $v\in V_j$. Then there exist an $m\in V(i,\b)$ such that $\phi(m)=v$. In this case, we have $\phi(x^sm)=\phi(\b m)=\b v$ and $\phi(x^sm)=x^s\phi(m)=x^sv=\a v$. Hence $\b=\a$. Since $\phi$ is an $H$-module isomorphism, $\phi(V_i)$ is a $kG$-submodule of $V (j,\a)$ and $\phi(V_i)\cong V_i$ as $kG$-modules.
By the discussion above, $V(j,\a)=\oplus_{t=0}^{s-1}x^tV_j$ and $V_j, xV_j, \cdots, x^{s-1}V_j$ are non-isomorphic simple $kG$-submodules of $V(j,\a)$. Hence
there exists an integer $t$ with $0\<t\<s-1$ such that $x^tV_j=\phi(V_i)$ since $\phi(V_i)$ is a simple $kG$-submodule of $V(j,\a)$. Therefore, $V_i\cong V_{\sigma^t(j)}$ as $kG$-modules, which implies  $[i]=[j]$.

Conversely, assume that $[i]=[j]$ and $\b=\a$. Then there exists an integer $t$ with $0\<t\<s-1$ such that $V_i=V_{\sigma^t(j)}$. Note that $V_i\subset V(i, \b)$ and $V_j\subset V(j, \b)$ as stated before. Since $x^tV_j$ is a $kG$-submodule of $V(j, \b)$ and $x^tV_j\cong V_{\sigma^t(j)}$, we have $V_i\cong x^tV_j$ as $kG$-modules. Let $\phi: V_i\ra x^tV_j$ be a $kG$-module isomorphism.
Since $V(i, \b)=\oplus_{l=0}^{s-1}x^lV_i$, we may extend $\phi$ to a linear map $\phi_0$ from $V(i, \b)$ to $V(j, \b)$ by letting $\phi_0(x^lv)=x^l\phi(v)$ for all $0\<l\<s-1$ and $v\in V_i$.
It is easy to check that $\phi_0$ is an $H$-module homomorphism.
Since both $V(i, \b)$ and $V(j, \b)$ are simple, it follows from $\phi_0\neq 0$
that $\phi_0$ is an $H$-module isomorphism.
\end{proof}

\begin{theorem}\label{3.5}
Let $M$ be a simple $H$-module.
\begin{enumerate}
\item[(1)] If $M$ is $x$-{\it torsion}, then $M\cong V_i$ for some $i\in I$.
\item[(2)] If $|\chi|=|q|=s$ and $M$ is $x$-{\it torsionfree}, then $s<\infty$ and $M\cong V(i,\b)$ for some $i\in I$ and $\b\in k^{\ti}$.
    \end{enumerate}
\end{theorem}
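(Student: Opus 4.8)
My plan is to treat the two parts separately, reducing each to results already in place.

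For part (1) the point is that $x$-torsion means $x$ acts as zero on $M$, so the $H$-action on $M$ factors through the quotient algebra $H/Hx\cong kG$ (here $Hx$ is a two-sided ideal because $xg=\chi^{-1}(g)gx$ for $g\in G$ and $x\cdot x\in Hx$); equivalently, every subspace of $M$ is automatically $x$-stable, so the $kG$-submodules and the $H$-submodules of $M$ coincide. Hence $M$ is a simple $kG$-module, and by the construction of the family $\{V_i\mid i\in I\}$ we conclude $M\cong V_i$ for some $i\in I$. This part is immediate.

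For part (2) I would proceed in steps. (i) Since $M$ is a nonzero $x$-torsionfree module, Lemma \ref{3.3} gives $|\chi|<\infty$, and together with the hypothesis $|\chi|=|q|=s$ this yields $s<\infty$ and $\chi^s=\e$. (ii) From $xg=\chi^{-1}(g)gx$ and $\chi^s=\e$ it follows that $x^s$ is central in $H$, so $x^s$ acts on the simple module $M$ as an $H$-module endomorphism; as $k$ is algebraically closed, Schur's Lemma gives a scalar $\b\in k$ with $x^sm=\b m$ for all $m\in M$. (iii) Observe $\b\neq 0$: if $x^sM=0$, then for any $m\in M$ the element $x^{s-1}m$ lies in $M^x=0$, so $x^{s-1}M=0$, and descending we obtain $xM=0$, contradicting that the nonzero module $M$ is $x$-torsionfree; hence $\b\in k^{\ti}$. (iv) Choose a simple $kG$-submodule $V_i\subseteq M$ (possible since $M$ is finite dimensional), and consider the canonical $H$-module map $\pi\colon M(V_i)=H\ot_{kG}V_i\ra M$, $h\ot v\mapsto hv$; it is nonzero, hence surjective because $M$ is simple. (v) Since $f_1(x^s)=x^s-\b$ is central and annihilates $M$, we have $N^1_{\b}(i)=(x^s-\b)M(V_i)\subseteq\ker\pi$, so $\pi$ descends to a surjection $V(i,\b)=M(V_i)/N^1_{\b}(i)\ra M$. (vi) By Proposition \ref{3.4}, $V(i,\b)$ is simple, so this surjection is an isomorphism, giving $M\cong V(i,\b)$ with $i\in I$ and $\b\in k^{\ti}$.

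I do not expect a serious obstacle here: the argument is essentially an assembly of Lemma \ref{3.3} (to force $s<\infty$), Schur's Lemma (to produce the central eigenvalue $\b$), and Proposition \ref{3.4} (whose conclusion that $V(i,\b)$ is simple is exactly what turns the surjection of step (v) into an isomorphism). The only points that need an explicit word are step (iii), that the eigenvalue of $x^s$ is nonzero, and, in step (v), that centrality of $x^s-\b$ is precisely what identifies the submodule $N^1_{\b}(i)$ with the plain image $(x^s-\b)M(V_i)$, so that $V(i,\b)$ really is the quotient through which $\pi$ factors.
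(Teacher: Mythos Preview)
Your proposal is correct and follows essentially the same approach as the paper's proof: both reduce part~(1) to the simple $kG$-modules via the vanishing $x$-action, and for part~(2) both use Lemma~\ref{3.3}, centrality of $x^s$ with Schur's Lemma to obtain the eigenvalue $\b$, the induced map $M(V_i)\to M$, and Proposition~\ref{3.4} to identify the quotient $V(i,\b)$ with $M$. Your step~(iii) spelling out why $\b\neq 0$ via the descending chain $x^{s-1}M\subseteq M^x=0$, etc., is a welcome elaboration of a point the paper leaves implicit.
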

\begin{proof}
(1) If $M$ is $x$-{\it torsion}, then $M$ is a simple $kG$-submodule. Hence there is an $i\in I$ such that $M\cong V_i$ as $kG$-modules, and so $M\cong V_i$ as $H$-modules.

(2) Assume that $|\chi|=|q|=s$ and $M$ is $x$-{\it torsionfree}. Then $s<\infty$ by Lemma \ref{3.3}. Let $V$ be a simple $kG$-submodule of $M$. Then $V\cong V_i$ for some $i\in I$. Without loss of generality, we may assume that $V_i$ is a simple $kG$-submodule of $M$. Define a liner map $\phi: M(V_i)=H\otimes_{kG}  V_i\rightarrow M$ by $\phi(h\otimes v)=hv$ for any $h\in H$ and $v\in V_i$. Since $M$ is a simple $H$-module, it is easy to see that $\phi$ is an $H$-module epimorphism.
Since $x^s$ is central element in $H$ and $M$ is $x$-{\it torsionfree}, there exists a $\b\in k^{\ti}$ such that $x^sm=\b m$ for any $m\in M$, i.e., $(x^s-\b)M=0$. Hence  $\phi(N_{\b}^1(i))=\phi((x^s-\b)M(V_i))=(x^s-\b)\phi(M(V_i))=(x^s-\b)M=0$. Thus, $\phi$ induces an $H$-module epimorphism $\widetilde{\phi}: V(i, \b)=M(V_i)/N_{\b}^1(i)\ra M$, which must be an isomorphism since $V(i, \b)$ and $M$ are both simple $H$-modules.
\end{proof}

\begin{corollary}\label{3.6}
The following statements hold.
\begin{enumerate}
\item[(1)] If $|\chi|=\oo$, then $\{V_i|i\in I\}$ is a representative set of isomorphic classes of simple $H$-modules.
\item[(2)] If $|\chi|=|q|=s<\oo$, then $\{V_i, V(j,\b)|i\in I,[j]\in I_0, \b\in k^{\ti}\}$ is a representative set of isomorphic classes of simple $H$-modules.
\end{enumerate}
\end{corollary}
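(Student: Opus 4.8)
The plan is to deduce Corollary~\ref{3.6} from the classification of simple modules in Theorem~\ref{3.5} together with the structural results of Propositions and Lemmas already established, so that essentially no new computation is required; the work is in assembling the pieces and checking that the list has no redundancies.

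For part (1), suppose $|\chi|=\oo$. By \leref{3.3} there is no nonzero $x$-torsionfree $H$-module, so every simple $H$-module is $x$-torsion (recall the dichotomy observed just before \leref{3.3}: a simple $H$-module is either $x$-torsion or $x$-torsionfree). By \thref{3.5}(1), every such simple module is isomorphic to $V_i$ for some $i\in I$. Conversely each $V_i$ is a simple $H$-module via the functor $F$ discussed at the start of the section. It remains to note that the $V_i$ are pairwise non-isomorphic as $H$-modules: two $V_i$ are isomorphic as $H$-modules iff they are isomorphic as $kG$-modules (since $x$ acts as zero on both), and by construction $\{V_i\mid i\in I\}$ is a set of representatives of the isomorphism classes of simple $kG$-modules. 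This gives (1).

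For part (2), suppose $|\chi|=|q|=s<\oo$. Every simple $H$-module is $x$-torsion or $x$-torsionfree; by \thref{3.5}(1) the $x$-torsion ones are exactly the $V_i$, $i\in I$, and by \thref{3.5}(2) the $x$-torsionfree ones are exactly the $V(i,\b)$, $i\in I$, $\b\in k^{\ti}$. So the displayed set certainly exhausts all isomorphism classes; the only real point is to show it is irredundant as written. First, $V_i$ is $x$-torsion while $V(j,\b)$ is $x$-torsionfree (noted right before \prref{3.4}), so no $V_i$ is isomorphic to any $V(j,\b)$. Second, the $V_i$, $i\in I$, are pairwise non-isomorphic as $H$-modules, exactly as in case (1). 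Third, by \prref{3.4}, $V(i,\b)\cong V(j,\a)$ iff $[i]=[j]$ and $\b=\a$; hence listing one representative $j$ per class $[j]\in I_0$ and letting $\b$ range over $k^{\ti}$ yields each $x$-torsionfree simple module exactly once. Combining these three observations shows $\{V_i,\,V(j,\b)\mid i\in I,\ [j]\in I_0,\ \b\in k^{\ti}\}$ is a set of representatives of the isomorphism classes of simple $H$-modules, proving (2).

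The only mild subtlety — and the step I would be most careful about — is the reduction "isomorphic as $H$-modules $\Rightarrow$ isomorphic as $kG$-modules" for $x$-torsion modules, and the symmetric remark that an $H$-module isomorphism $V(i,\b)\cong V(j,\a)$ restricts/descends appropriately; but the former is immediate since an $H$-linear map is in particular $kG$-linear and $x$ acts as $0$, and the latter is precisely what \prref{3.4} records. Everything else is bookkeeping, so the corollary follows.
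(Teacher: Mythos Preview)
Your proof is correct and takes essentially the same approach as the paper, which simply cites Lemmas~\ref{3.2}--\ref{3.3}, Proposition~\ref{3.4} and Theorem~\ref{3.5}; you have merely unpacked how those results combine (the dichotomy $x$-torsion/$x$-torsionfree, exhaustion via \thref{3.5}, and irredundancy via \prref{3.4}). The reference to \leref{3.2} in the paper's proof is already absorbed into \prref{3.4}, so your omission of it is harmless.
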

\begin{proof}
It follows from Lemmas \ref{3.2}-\ref{3.3}, Proposition \ref{3.4} and Theorem \ref{3.5}.
\end{proof}
\section{\bf Indecomposable modules}\selabel{4}
Throughout this section, we assume that the group algebra $kG$ is finite dimensional and semisimple. We will use the notations of last section and let $|\chi|=s$. In this case, $1<|q|\<s<\infty$. Moreover, $I$ and $I_0$ are finite sets.

Let $A$ be a $k$-algebra, and $M$ an $A$-module. Then the smallest nonnegative integer $l$ with ${\rm rad}^l(M)=0$ is called the {\it radical length} of $M$, denoted by ${\rm rl}(M)$, and $0\subset{\rm rad}^{l-1}(M)\subset\cdots\subset{\rm rad}^2(M)\subset{\rm rad}(M)\subset M$ is called the {\it radical series} of $M$. By \cite[Proposition II.4.7]{ARS}, ${\rm rl}(M)={\rm sl}(M)$, the socle length of $M$, which is sometimes called the {\it Loewy length} of $M$. Let ${\rm l}(M)$ denote the length of $M$.

For any $t\in\mathbb{N}^+$ and $i\in I$, let $J_t(i)$ be the submodule of $M(V_i)$ generated by $x^tV_i$, and define $V_t(i)= M ( V_i)/J_t(i)$ to be the corresponding quotient module. Note that $J_t(i)=x^tM(V_i)=\oplus_{j=t}^{\infty}x^jV_i$. For simplicity, denote still by $z$ the image of an element $z\in M(V_i)$ under the canonical epimorphism $M(V_i)\ra V_t(i)$. Then $V_i\subseteq V_t(i)$, $V_t(i)=\oplus_{j=0}^{t-1}x^jV_i$ as vector spaces, dim$V_t(i)=t {\rm dim}V_i$ and $x^tV_t(i)=0$. Moreover, $x^jV_i$ is a $kG$-submodule of $V_t(i)$ and $x^jV_i\cong V_{\s^j(i)}$, $0\<j\<t-1$.

\begin{remark}\label{4.1}
$V_1(i)\cong V_i$ and $V_t(i,0)=V_{ts}(i)$ as $H$-modules, where $i\in I$ and $t\in\mathbb{N}^+$.

\end{remark}

The following lemma is obvious.

\begin{lemma}\label{4.2}
Let $i\in I$ and $t\in\mathbb{N}^+$. Then for any $0\<j\<t-1$ and $0\neq v\in V_i$, $x^jv\neq 0$ in $V_t(i)$.
\end{lemma}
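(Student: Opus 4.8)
The plan is to read the lemma off the vector-space decomposition of $M(V_i)$ that has already been recorded. Since $H$ is free as a right $kG$-module with basis $\{x^l\mid l\>0\}$, tensoring over $kG$ gives
$$M(V_i)=H\ot_{kG}V_i=\bigoplus_{l\>0}x^l\ot_{kG}V_i$$
as $k$-vector spaces, and for each fixed $l$ the map $V_i\ra M(V_i)$, $v\mapsto x^l\ot_{kG}v$, is an injective $k$-linear map onto the $l$-th summand (again because $\{x^l\}$ is a $kG$-basis of $H$). Writing $x^lv$ for $x^l\ot_{kG}v$ as in the text, we conclude that $x^lv\neq 0$ in $M(V_i)$ whenever $0\neq v\in V_i$, and that $x^lv$ lies in the $l$-th summand above.

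First I would note that $J_t(i)=x^tM(V_i)=\bigoplus_{l\>t}x^l\ot_{kG}V_i$, so the kernel of the canonical epimorphism $\pi\colon M(V_i)\ra V_t(i)$ is the sum of the summands with index $\>t$. Hence $\pi$ restricts to a $k$-linear isomorphism of the complementary subspace $\bigoplus_{l=0}^{t-1}x^l\ot_{kG}V_i$ onto $V_t(i)$. For $0\<j\<t-1$ and $0\neq v\in V_i$, the element $x^jv$ sits in the $j$-th summand with $j<t$, so it does not lie in $\ker\pi$; therefore its image $x^jv$ in $V_t(i)$ is nonzero, which is exactly the assertion.

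Alternatively one can argue only from the facts already stated in the text: $V_t(i)=\bigoplus_{j=0}^{t-1}x^jV_i$ and $\dim V_t(i)=t\,{\rm dim}V_i$, while each $x^jV_i$ is the image of the surjective $k$-linear map $V_i\ra x^jV_i$, $v\mapsto x^jv$, so ${\rm dim}\,x^jV_i\<{\rm dim}V_i$. Comparing the dimensions of the $t$ summands with their sum $t\,{\rm dim}V_i$ forces ${\rm dim}\,x^jV_i={\rm dim}V_i$ for every $j$, whence each such map is injective and $x^jv\neq 0$ for all $0\neq v\in V_i$. There is essentially no obstacle in this argument; the only point that needs any care is the injectivity of $v\mapsto x^l\ot_{kG}v$ on $M(V_i)$, which is just the freeness of $H$ as a right $kG$-module on the basis $\{x^l\mid l\>0\}$.
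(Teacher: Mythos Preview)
Your proof is correct, and it is exactly the argument the paper has in mind: the paper states the lemma without proof (``The following lemma is obvious''), relying on the decomposition $V_t(i)=\oplus_{j=0}^{t-1}x^jV_i$ with $\dim V_t(i)=t\,\dim V_i$ recorded just above, which is precisely what your two arguments unpack.
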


\begin{proposition}\label{4.3}
Let $i\in I$ and $t\in\mathbb{N}^+$. Then $V_t(i)$ is an indecomposable uniserial $H$-module. Moreover, ${\rm l}(V_t(i))=t$.
\end{proposition}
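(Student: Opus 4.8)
The plan is to analyze the submodule structure of $V_t(i)$ directly using the filtration by the subspaces $x^jV_i$. First I would observe that since $kG$ is semisimple, a submodule $N \subseteq V_t(i)$ is in particular a $kG$-submodule, so $N$ decomposes as a direct sum of simple $kG$-modules drawn from the composition factors of $V_t(i) = \bigoplus_{j=0}^{t-1} x^jV_i$. The key structural fact to establish is that the only $H$-submodules of $V_t(i)$ are the subspaces $W_l := \bigoplus_{j=l}^{t-1} x^jV_i$ for $0 \le l \le t$. One inclusion is easy: each $W_l$ is visibly $x$-stable (as $x \cdot x^jV_i = x^{j+1}V_i$, which is zero in $V_t(i)$ once $j+1 \ge t$) and $G$-stable, hence an $H$-submodule. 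For the converse, I would take a nonzero $H$-submodule $N$, let $l$ be minimal such that $N \cap x^lV_i \neq 0$ — or more carefully, pick a nonzero $m \in N$ and write $m = \sum_{j\ge l} x^j v_j$ with $v_l \neq 0$, where $l$ is the smallest index appearing. Applying $x^{t-1-l}$ kills all terms with $j > l$ and yields $x^{t-1}v_l \in N$, which is nonzero in $V_t(i)$ by Lemma~\ref{4.2}. So $N \cap x^{t-1}V_i \neq 0$; but $x^{t-1}V_i \cong V_{\sigma^{t-1}(i)}$ is a simple $kG$-module, forcing $x^{t-1}V_i \subseteq N$.

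The heart of the argument is then a downward induction: having shown $x^{t-1}V_i \subseteq N$, I want to climb back up. This is where the torsion-free-type behavior matters. Given any $v \in V_i$, since the map $V_i \to x^{t-1}V_i$, $v \mapsto x^{t-1}v$, is a $kG$-isomorphism and $V_i$ is simple, I can realize every element of $x^{t-1}V_i$ as $x^{t-1}v$. Now suppose we know $W_{l+1} \subseteq N$; to show $x^lV_i \subseteq N$ I would use that $N$ is a $kG$-submodule containing a nonzero element of the form $x^lv + (\text{higher terms in } W_{l+1})$ — such an element exists by repeating the minimality argument above relative to the quotient $V_t(i)/W_{l+1} \cong V_{l+1}(i)$ — and subtract off the higher terms, which lie in $N$ by the inductive hypothesis, leaving $x^lv \in N$; since $x^lV_i$ is a simple $kG$-module this gives $x^lV_i \subseteq N$. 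Iterating down to $l=0$ shows the submodules are totally ordered as $0 = W_t \subset W_{t-1} \subset \cdots \subset W_1 \subset W_0 = V_t(i)$, so $V_t(i)$ is uniserial; in particular it has a unique maximal submodule, hence is indecomposable, and its composition factors are $W_j/W_{j+1} \cong x^jV_i \cong V_{\sigma^j(i)}$ for $j = 0,\dots,t-1$, giving ${\rm l}(V_t(i)) = t$.

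An alternative, cleaner route for indecomposability alone: show $\operatorname{End}_H(V_t(i))$ is local. Since $V_t(i)$ is generated as an $H$-module by $V_i$, an endomorphism is determined by its restriction to $V_i$, which must land in $(V_t(i))^x \cap (\text{isotypic component } V_i)$; one checks $(V_t(i))^x = x^{t-1}V_i$ when the $V_{\sigma^j(i)}$ are pairwise distinct, but in general the socle could be larger, so the filtration argument above is more robust and I would go with it. The main obstacle I anticipate is precisely the inductive climbing step: making rigorous that a submodule meeting $W_{l+1}$ "trivially modulo lower terms" must actually contain a clean copy of $x^lV_i$ — this requires using simplicity of each $kG$-layer together with the fact (from Lemma~\ref{4.2}) that $x^j$ does not annihilate $V_i$ prematurely, and care is needed because a priori a submodule could project onto $x^lV_i$ only "diagonally" together with contributions in $W_{l+1}$; semisimplicity of $kG$ is what lets us split this off.
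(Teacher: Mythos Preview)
Your proposal is correct and takes essentially the same route as the paper: define the chain $W_l = \bigoplus_{j \ge l} x^jV_i$, show any nonzero submodule $N$ contains $x^{t-1}V_i$ by pushing down with a suitable power of $x$ (using Lemma~\ref{4.2}), then climb back up by an inductive subtraction argument to conclude $N = W_l$ for some $l$. The only cosmetic differences are that the paper fixes a single element $z \in N$ with $x^{l-1}z \neq 0$ and applies decreasing powers of $x$ to it rather than passing to quotients at each stage, and your phrase ``iterating down to $l=0$'' should read ``iterating until $N$ coincides with some $W_l$.''
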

\begin{proof}
For any $0\<l\<t-1$, let $M_l=\sum_{j=l}^{t-1}x^jV_i\subseteq V_t(i)$. Then $M_j$ is an $H$-submodule of $V_t(i)$. Obviously,
$$V_t(i)=M_0\supset M_1\supset\cdots\supset M_{t-1}\supset M_t=0$$
is a composition series of $V_t(i)$ and $M_l/M_{l+1}\cong V_{\s^l(i)}$, $0\<l\<t-1$. Hence ${\rm l}(V_t(i))=t$.

%If $s\>t$ then $V_i, xV_i, \cdots, x^{t-1}V_i$ are non-isomorphic simple $kG$-submodules of $V_t(i)$. Hence $N$ is the sum of those $x^jV_i$ %with $x^jV_i\subseteq N$. Let $l={\rm min}\{j|0\<j\<t-1, x^jV_i\subseteq N\}$.
%Then obviously, $N=\sum_{j=l}^{t-1}x^jV_i=M_l$. Hence $V_t(i)$ is uniserial, and so it is indecomposable.

%Now assume $s<t$.

Let $N$ be a nonzero submodule of $V_t(i)$. Since $x^tN\subseteq x^tV_t(i)=0$, there is an integer $l$ with $1\<l\<t$ such that $x^lN=0$ but $x^{l-1}N\neq 0$. If $l=t$ then $N\subseteq V_t(i)=M_0=M_{t-l}$. If $l<t$ and $z\in N$, then $z=\sum_{j=0}^{t-1}x^jv_j$ for some $v_j\in V_i$. Since $x^lz=\sum_{j=0}^{t-1}x^{l+j}v_j=\sum_{j=0}^{t-1-l}x^{l+j}v_j=0$,
$x^{l+j}v_j=0$ for any $0\<j\<t-1-l$. By Lemma \ref{4.2}, $v_j=0$ for any $0\<j\<t-1-l$. Hence $z=\sum_{j=t-l}^{t-1}x^jv_j\in M_{t-l}$, and so $N\subseteq M_{t-l}$. Thus, we have proven $N\subseteq M_{t-l}$. Since $x^{l-1}N\neq 0$, we may choose an element $z\in N$ such that $x^{l-1}z\neq 0$. From $N\subseteq M_{t-l}$, we have $z=\sum_{j=t-l}^{t-1}x^jv_j$ for some $v_j\in V_i$. Hence $0\neq x^{l-1}z=x^{t-1}v_{t-l}\in N\cap(x^{t-1}V_i)$. This implies $v_{t-l}\neq 0$ and  $x^{t-1}V_i\subseteq N$ since $x^{t-1}V_i$ is simple as a $kG$-module. Now suppose that $1\<r<l$ and $x^jV_i\subseteq N$ for all $t-r\<j\<t-1$. Then $x^{l-r-1}z=\sum_{j=t-l}^{t-1}x^{l-r-1+j}v_j=\sum_{j=t-l}^{t-l+r}x^{l-r-1+j}v_j\in N$. Hence $0\neq  x^{t-r-1}v_{t-l}=x^{l-r-1}z-\sum_{j=t-l+1}^{t-l+r}x^{l-r-1+j}v_j\in N\cap(x^{t-(r+1)}V_i)$, and  so  $x^{t-(r+1)}V_i\subseteq N$.
Thus, we have shown that $x^jV_i\subseteq N$ for all $t-l\<j\<t-1$. Therefore, $M_{t-l}\subseteq N$, and so $N=M_{t-l}$. It follows that $V_t(i)$ is uniserial and indecomposable.
\end{proof}

\begin{corollary}\label{4.4}
Let $i, j\in I$ and $n, t\in\mathbb{N}^+$. Then $V_t(i)\cong V_n(j)$ if and only if $t=n$ and $i=j$.
\end{corollary}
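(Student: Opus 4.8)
The plan is to derive both directions from structural invariants of the modules $V_t(i)$ established in Proposition~\ref{4.3}. The easy direction is the ``if'' part: if $t=n$ and $i=j$ then the modules are literally the same, so there is nothing to prove. For the ``only if'' direction, suppose $\phi\colon V_t(i)\to V_n(j)$ is an $H$-module isomorphism. First I would compare lengths: since $\phi$ is an isomorphism, ${\rm l}(V_t(i))={\rm l}(V_n(j))$, and by Proposition~\ref{4.3} this gives $t=n$ immediately. So the remaining task is to show $i=j$, i.e. that the two simple $kG$-modules sitting at the bottom (or top) of the uniserial modules agree.

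To pin down $i$, I would identify it via a canonical composition factor. Recall from the proof of Proposition~\ref{4.3} that $V_t(i)$ is uniserial with composition series $M_0\supset M_1\supset\cdots\supset M_t=0$ where $M_l/M_{l+1}\cong V_{\s^l(i)}$; in particular the unique simple submodule (the socle) is $M_{t-1}=x^{t-1}V_i\cong V_{\s^{t-1}(i)}$, and the head $V_t(i)/M_1\cong V_i$. An isomorphism $\phi$ must carry socle to socle and head to head, so $V_{\s^{t-1}(i)}\cong V_{\s^{t-1}(j)}$ and $V_i\cong V_j$ as $kG$-modules. The latter already says $i=j$ by the convention that the $V_i$ are pairwise non-isomorphic. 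That closes the argument. (Alternatively, and perhaps more cleanly, one can argue directly: $\phi$ restricts to an isomorphism between the $x$-torsion submodules, but actually the simplest hook is the head, since $V_i$ is generated by $1\otimes_{kG} V_i$ and $x$ kills the head; $\phi$ induces an isomorphism $V_t(i)/xV_t(i)\to V_n(j)/xV_n(j)$, i.e. $V_i\cong V_j$.)

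There is no serious obstacle here: the corollary is a direct bookkeeping consequence of Proposition~\ref{4.3}, which already did the real work of establishing uniseriality and computing the length and composition factors. The only point requiring a line of care is that an isomorphism of $H$-modules respects the $H$-submodule structure and hence sends the maximal submodule ${\rm rad}(V_t(i))=M_1$ to ${\rm rad}(V_n(j))=M_1'$, so that it descends to an isomorphism of heads; this is standard. I would write the proof in three short steps: (i) lengths force $t=n$; (ii) any isomorphism descends to an isomorphism on heads; (iii) the heads are $V_i$ and $V_j$, which are isomorphic only if $i=j$.
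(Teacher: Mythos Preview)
Your proposal is correct and follows essentially the same route as the paper: compare lengths via Proposition~\ref{4.3} to obtain $t=n$, then use that an isomorphism induces an isomorphism on heads $V_t(i)/{\rm rad}(V_t(i))\cong V_i$ and $V_n(j)/{\rm rad}(V_n(j))\cong V_j$ to conclude $i=j$. The paper's proof is just the one-line version of your steps (i)--(iii).
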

\begin{proof}
By Proposition \ref{4.3} and its proof,  we have ${\rm l}(V_t(i))=t$, ${\rm l}(V_n(j))=n$,  $V_t(i)/{\rm  rad}(V_t(i))\cong V_i$ and $V_n(j)/{\rm rad}(V_n(j))\cong V_j$. Hence the corollary follows.
\end{proof}

\begin{lemma}\label{4.5}
Let $M$ be an $H$-module. If each composition factor of $M$ is isomorphic  to $V_i$ for some  $i\in I$, then $xM={\rm rad}(M)$ and $M^x={\rm soc}(M)$.
\end{lemma}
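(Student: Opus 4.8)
The plan is to prove the two equalities $xM = \operatorname{rad}(M)$ and $M^x = \operatorname{soc}(M)$ by exploiting the hypothesis that every composition factor of $M$ lies among $\{V_i \mid i \in I\}$, i.e. is an $x$-torsion simple module. First I would observe that for any such composition factor $W \cong V_i$, the element $x$ acts as zero; hence $x$ maps into every submodule in a composition series in a controlled way. More precisely, choose a composition series $0 = M_0 \subset M_1 \subset \cdots \subset M_r = M$ with $M_\ell/M_{\ell-1} \cong V_{i_\ell}$. Since $x$ kills each quotient $M_\ell/M_{\ell-1}$, we get $x M_\ell \subseteq M_{\ell-1}$ for every $\ell$, and in particular $x M \subseteq M_{r-1} \subsetneq M$, so $xM$ is a proper submodule and $M/xM$ is a (nonzero) $kG$-module on which $x$ acts as $0$; thus $M/xM$ is semisimple as an $H$-module. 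This shows $\operatorname{rad}(M) \subseteq xM$.

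For the reverse inclusion $xM \subseteq \operatorname{rad}(M)$, the cleanest route is to note that $M/\operatorname{rad}(M)$ is a semisimple $H$-module, hence a direct sum of simple modules; each of these simples is a composition factor of $M$, hence isomorphic to some $V_i$, hence $x$-torsion. Therefore $x$ annihilates $M/\operatorname{rad}(M)$, which says exactly $xM \subseteq \operatorname{rad}(M)$. Combining the two inclusions gives $xM = \operatorname{rad}(M)$.

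The statement $M^x = \operatorname{soc}(M)$ I would handle by a dual argument. On one hand $\operatorname{soc}(M)$ is semisimple, and every simple submodule is a composition factor of $M$, hence $x$-torsion, so $x \cdot \operatorname{soc}(M) = 0$, giving $\operatorname{soc}(M) \subseteq M^x$. On the other hand, $M^x = \{m \in M \mid xm = 0\}$ is a submodule on which $x$ acts as zero; thus, as an $H$-module, $M^x$ is just a $kG$-module with trivial $x$-action, and since $kG$ is \emph{not} assumed semisimple here I cannot immediately conclude $M^x$ is semisimple. However — and this is the key point — $M^x$ is still an $H$-submodule, so its socle (as an $H$-module) equals $\operatorname{soc}(M) \cap M^x = \operatorname{soc}(M)$, and what I actually need is that $M^x$ contains no element outside $\operatorname{soc}(M)$; equivalently that every nonzero submodule of $M^x$ meets $\operatorname{soc}(M)$, which is automatic, but to get $M^x \subseteq \operatorname{soc}(M)$ I should argue that any $H$-submodule $N \subseteq M^x$ is semisimple \emph{as an $H$-module}: indeed $x$ acts as $0$ on $N$, so $H$-submodules of $N$ coincide with $kG$-submodules, and any composition factor of $N$ is a composition factor of $M$, hence of the form $V_i$; but a $kG$-module all of whose composition factors are simple need not be semisimple unless... — so the correct phrasing is that we do \emph{not} need semisimplicity of $kG$ for the inclusion we want. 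Let me restate: since $x$ annihilates $M/xM = M/\operatorname{rad}(M)$ and $x$ annihilates $M^x$ by definition, and since $\operatorname{soc}(M)$ is the largest semisimple submodule, it suffices to show $M^x$ is semisimple; if $kG$ is not semisimple this can fail, so the lemma as stated presumably relies on the standing hypothesis of Section 4 that $kG$ \emph{is} finite dimensional semisimple, under which a $kG$-module with all composition factors simple is semisimple, hence $M^x$ (a $kG$-module with trivial $x$-action) is a semisimple $H$-module, giving $M^x \subseteq \operatorname{soc}(M)$ and completing the proof.

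The main obstacle, then, is bookkeeping about \emph{which} category hypothesis is in force: the proof of $xM = \operatorname{rad}(M)$ goes through for arbitrary $kG$ just from the composition-factor hypothesis, but the equality $M^x = \operatorname{soc}(M)$ genuinely uses that $kG$ is semisimple (so that $M^x$, being a $kG$-module with all composition factors simple, is itself semisimple as a $kG$-module and hence as an $H$-module). Once that is pinned down, both halves are short. A cleaner unified way to present $M^x = \operatorname{soc}(M)$: show $\operatorname{soc}(M) \subseteq M^x$ as above; conversely $M^x$ is an $H$-submodule with $xM^x = 0$, so by the semisimplicity of $kG$ the module $M^x$ is a direct sum of $kG$-simples each of which is an $H$-simple $V_i$, hence $M^x$ is a semisimple $H$-module and therefore $M^x \subseteq \operatorname{soc}(M)$.
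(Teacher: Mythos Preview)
Your argument is essentially the paper's: show $xM\subseteq\operatorname{rad}(M)$ because $x$ kills the semisimple quotient $M/\operatorname{rad}(M)$, show $\operatorname{rad}(M)\subseteq xM$ because $M/xM$ has trivial $x$-action and is therefore semisimple, and run the dual argument for $M^x=\operatorname{soc}(M)$. The final cleaned-up version you give for the socle is exactly what the paper does.

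One correction to your commentary, though: you assert that ``the proof of $xM=\operatorname{rad}(M)$ goes through for arbitrary $kG$,'' in contrast to the socle statement. This is not right. The inclusion $xM\subseteq\operatorname{rad}(M)$ indeed needs only the composition-factor hypothesis, but your argument for $\operatorname{rad}(M)\subseteq xM$ in the first paragraph concludes that $M/xM$ is semisimple solely from the fact that it is a $kG$-module with trivial $x$-action --- and that step uses semisimplicity of $kG$ just as much as the $M^x\subseteq\operatorname{soc}(M)$ step does. (Concretely: if $G$ is a $p$-group in characteristic $p$ and $M=kG$ with $x$ acting as zero, every composition factor is the trivial module $V_\varepsilon$, yet $xM=0\neq\operatorname{rad}(kG)=\operatorname{rad}(M)$.) So both halves of the lemma rely symmetrically on the standing hypothesis of Section~4 that $kG$ is semisimple; once you acknowledge that, your proof matches the paper's.
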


\begin{proof}
Assume that each composition factor of $M$ is isomorphic to some $V_i$. Then $x(M/{\rm rad}(M))=0$, and hence $xM\subseteq{\rm rad}(M)$. On the other hand, it is easy to see that $xM$ is a submodule of $M$. Let $\overline{M}=M/xM$. Then $x\overline{M}=0$, and hence each $kG$-submodule of $\overline{M}$ is an $H$-submodule of $\overline{M}$. So $\overline{M}$ is a semisimple $H$-module, which implies that ${\rm rad}(M)\subseteq xM$. Therefore $xM={\rm rad}(M)$. Similarly, from $xM^x=0$, one gets $M^x\subseteq{\rm soc}(M)$. By the assumption on $M$, one knows that each simple submodule of $M$ is contained in $M^x$. Hence ${\rm soc}(M)\subseteq M^x$, and so $M^x={\rm soc}(M)$.
\end{proof}

\begin{theorem}\label{4.6}
Let $M$ be an indecomposable $H$-module. If each composition factor of $M$ is isomorphic  to $V_j$ for some $j\in I$. Then $M$ is isomorphic to some $V_t(i)$, where $i \in I$ and $t\in\mathbb{N}^+$.
\end{theorem}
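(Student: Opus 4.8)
The plan is to induct on the length $\mathrm{l}(M)=n$, the case $n=1$ being immediate since then $M$ is simple with $M^x=M$ (as $x$ acts nilpotently on any finite dimensional module all of whose composition factors lie in $\{V_j\}$), so $M\cong V_i=V_1(i)$ for some $i\in I$ by Theorem \ref{3.5}(1) and Remark \ref{4.1}. For the inductive step, first I would use Lemma \ref{4.5}: since each composition factor of $M$ is some $V_j$, we have $xM=\mathrm{rad}(M)$ and $M^x=\mathrm{soc}(M)$. Because $M$ is indecomposable and finite dimensional over an algebraically closed field, its endomorphism ring is local, and the standard Fitting-type argument shows $M/\mathrm{rad}(M)$ is simple, say $M/xM\cong V_i$. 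Pick $v\in M$ with $V_i\hookrightarrow M$ mapping onto $M/xM$ — more precisely, choose a simple $kG$-submodule of $M$ generating $M$ modulo $xM$; call its isomorphism type $V_i$ and fix an embedding realizing a $kG$-section of $M\twoheadrightarrow M/xM$. This gives a $kG$-linear splitting of the surjection onto the top, hence $M$ is generated as an $H$-module by this copy of $V_i$.

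Next I would transfer this to the universal object $M(V_i)=H\otimes_{kG}V_i=\bigoplus_{l\ge 0}x^l\otimes_{kG}V_i$. Sending the chosen copy of $V_i$ into $M$ and extending $H$-linearly produces an $H$-module epimorphism $\psi\colon M(V_i)\to M$, since the image is a submodule containing a generating set. Let $K=\ker\psi$, so $M\cong M(V_i)/K$. Because $x$ acts nilpotently on $M$, there is a least $t\in\mathbb{N}^+$ with $x^tM=0$; then $x^t\otimes_{kG}V_i=x^t M(V_i)\subseteq K$, so $\psi$ factors through $V_t(i)=M(V_i)/J_t(i)$, yielding an $H$-epimorphism $\bar\psi\colon V_t(i)\to M$. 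The point of choosing $t$ minimal is that $x^{t-1}M\ne 0$, which forces $\bar\psi$ restricted to the bottom layer $x^{t-1}V_i$ (a simple $kG$-module) to be nonzero, hence injective on it.

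Now the key step: I claim $\bar\psi$ is an isomorphism. By Proposition \ref{4.3}, $V_t(i)$ is uniserial with composition series $V_t(i)=M_0\supset M_1\supset\cdots\supset M_t=0$ where $M_l=\bigoplus_{j\ge l}x^jV_i$ and $M_l/M_{l+1}\cong V_{\sigma^l(i)}$. Since $V_t(i)$ is uniserial, its only submodules are the $M_l$, so $\ker\bar\psi=M_l$ for some $l$; but $\ker\bar\psi$ does not contain $M_{t-1}=x^{t-1}V_i$ (just shown), which forces $l\ge t$, i.e. $\ker\bar\psi=M_t=0$. Hence $\bar\psi$ is an isomorphism and $M\cong V_t(i)$ with $t=\mathrm{l}(M)$, as $\mathrm{l}(V_t(i))=t$ by Proposition \ref{4.3}. (In particular the induction hypothesis is not even strictly needed once the uniserial structure of $V_t(i)$ is in hand — the real content is packaging $M$ as a quotient of $V_t(i)$ and then invoking uniseriality.)

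The main obstacle I anticipate is the claim that $M/\mathrm{rad}(M)$ is simple, i.e. that an indecomposable $M$ of this type has simple top. One route is the observation that $M/xM=M/\mathrm{rad}(M)$ is semisimple, so if it were a direct sum of two nonzero $kG$-submodules one would want to lift a corresponding decomposition of $M$; this needs care because $M$ itself need not be a direct sum. The clean fix is to instead run the argument from the socle side: $M^x=\mathrm{soc}(M)$ by Lemma \ref{4.5}, and one shows dually that $\mathrm{soc}(M)$ is simple using that $M$ embeds, $H$-linearly, into a suitable coinduced/injective object, or more elementarily by noticing that if $\mathrm{soc}(M)=S_1\oplus S_2$ with both simple, then the $H$-submodules generated interact with the $x$-filtration in a way that splits $M$ — essentially the mirror of Proposition \ref{4.3}'s uniseriality proof applied to $M$ directly. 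Either way, once simplicity of the top (or socle) is secured, the rest is the formal quotient-of-$V_t(i)$ argument above, and I would present the proof in that order: reduce to simple top via Lemma \ref{4.5} and indecomposability, build $\bar\psi\colon V_t(i)\to M$, then kill the kernel using Proposition \ref{4.3}.
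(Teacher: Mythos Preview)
Your overall architecture—produce an $H$-epimorphism $\bar\psi\colon V_t(i)\twoheadrightarrow M$ and then kill its kernel using the uniseriality of $V_t(i)$ from Proposition~\ref{4.3}—is sound and matches the paper. The genuine gap is exactly where you flag it: the claim that $M/\mathrm{rad}(M)$ is simple. Your first justification, that a local endomorphism ring forces a simple top, is false in general (over the Kronecker quiver the indecomposable representation of dimension vector $(2,1)$ has endomorphism ring $k$ but top $S_1\oplus S_1$), and nothing in your setup rules this phenomenon out a priori. Your fallback sketches—``show dually that $\mathrm{soc}(M)$ is simple'' or ``the $H$-submodules generated interact with the $x$-filtration in a way that splits $M$''—are not arguments; note in particular that the $H$-submodule generated by a simple summand of $\mathrm{soc}(M)=M^x$ is just that summand itself, so no splitting of $M$ is produced this way.

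The paper closes this gap by proving directly that $M$ is uniserial (whence the top is automatically simple). The device is the linear map $\phi\colon M\to M$, $\phi(m)=xm$: by Lemma~\ref{4.5} one has $\phi(N)=\mathrm{rad}(N)$ for every submodule $N$ and $\ker\phi=\mathrm{soc}(M)$, and (since $xg=\chi^{-1}(g)gx$) both $\phi(N)$ and $\phi^{-1}(N)$ are submodules whenever $N$ is. These are precisely the hypotheses of \cite[Lemma~4.1]{WangYouChen}, which then yields uniseriality of the indecomposable $M$. After that the paper chooses a simple $kG$-submodule $V\cong V_i$ with $x^{t-1}V\neq 0$ (rather than a lift of the top, though for a uniserial module these coincide), checks $M=\sum_{j=0}^{t-1}x^jV$ by a length comparison, and finishes with the same quotient-of-$V_t(i)$ argument you give. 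So what your proposal is missing is a bona fide proof that $M$ has simple top—equivalently, that $M$ is uniserial; either invoke the external lemma as the paper does or supply its proof.
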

\begin{proof}
Assume that each composition factor of $M$ is isomorphic to some $V_j$.
Define a linear endomorphism $\phi$ of $M$ by $\phi(m)=xm$, $m\in M$. Then using the map $\phi$,  it follows from Lemma \ref{4.5} and \cite[Lemma 4.1]{WangYouChen} that $M$ is uniserial. Hence the radical series of $M$ is its unique composition series. Let $t={\rm l}(M)$. Then $t\>1$ and $x^tM=0$ but $x^{t-1}M\neq 0$ by Lemma \ref{4.5}. Since $M$ is semisimple as a $kG$-module, there is a simple $kG$-submodule $V$ of $M$ such that $x^{t-1}V\neq 0$. Let $N=\sum_{j=0}^{t-1}x^jV$. From $x^tV\subseteq x^tM=0$, it is easy to see that $N$ is an $H$-submodule of $M$, and consequently $N$ is also uniserial. Hence ${\rm l}(N)$ is equal to the radical length of $N$. Clearly, $x^{t-1}N=x^{t-1}V\neq 0$ and  $x^tN=0$. Thus, by Lemma \ref{4.5}, one knows that ${\rm l}(N)=t={\rm l}(M)$, which implies $M=N=\sum_{j=0}^{t-1}x^jV$. Since $V$ is a simple $kG$-submodule of $M$, there exists an $i\in I$ such that $V\cong V_i$ as $kG$-modules. Let $f: V_i\ra V$ be a $kG$-module isomorphism. Define a linear map $\psi: M(V_i)\ra M$ to be the composition
$$\psi: M(V_i)=H\ot_{kG}V_i\xrightarrow{{\rm id}\ot f}H\ot_{kG}V\xrightarrow{\cdot}M.$$
That is, $\psi(h\ot v)=hf(v)$ for any $h\in H$ and $v\in V_i$. Obviously, $\psi$ is an $H$-module epimorphism. Now we have $\psi(J_t(i))=\psi(x^tM(V_i))=x^t\psi(M(V_i))=x^tM=0$. Hence $\psi$ induces an $H$-module epimorphism $\ol{\psi}: V_t(i)=M(V_i)/J_t(i)\ra M$. Since ${\rm l}(V_t(i))=t={\rm l}(M)$, $\ol{\psi}$ is an $H$-module isomorphism.
This completes the proof.
\end{proof}

%\begin{corollary}\label{4.5}
%Assume that $|\chi|=|q|=\infty$. Let $M$ be an indecomposable $H$-module. Then $M$ %is isomorphic to some $V_t(i)$, where $t\>1$ and $i\in I$. Moreover, %$\{V_t(i)|t\>1, i\in I\}$ is a representative set of isomorphic classes of finite %dimensional indecomposable $H$-modules.
%\end{corollary}
%\begin{proof}
%It follows form Corollary \ref{3.6}, Corollary \ref{4.4} and Theorem \ref{4.6}.
%\end{proof}

Let $M$ be an arbitrary $H$-module. For any monic polynomial $f(y)\in k[y]$, put
$$M^{(f)}=\{m\in M|f(x^s)^rm=0 \text{ for some integer }r>0\}.$$
Note that \cite[Lemma 4.10, Theorem 4.11, Corollary 4.12, Lemma 4.13 ]{WangYouChen} still hold. When $f(y)=y-\b$ for some $\b\in k$, we denote $M^{(f)}$ by $M^{(\b)}$.

In the rest of this section, assume $|q|=|\chi|=s$.

\begin{lemma}\label{4.7}
Let $M$ be an indecomposable $H$-module. If there exists a scalar $\b\in k^{\ti}$ such that $(x^s-\b)M=0$, then $M$ is simple and isomorphic to $V(i,\b)$ for some $i\in I$.
\end{lemma}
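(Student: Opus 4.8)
The plan is to reduce to the simple module classification already obtained. Since $(x^s-\b)M = 0$ with $\b \neq 0$, the operator $x$ acts invertibly on $M$ (its $s$-th power is the scalar $\b$), so $M$ is $x$-torsionfree; in particular $M^x = 0$. First I would pick a simple $kG$-submodule $V$ of $M$ (possible since $kG$ is semisimple), say $V \cong V_i$, and without loss of generality treat $V_i$ as a $kG$-submodule of $M$. As in the proof of Theorem~\ref{3.5}(2), define the $H$-module map $\phi\colon M(V_i) = H\ot_{kG}V_i \to M$ by $\phi(h\ot v) = hv$. Because $(x^s - \b)M = 0$, the map $\phi$ kills $N_\b^1(i) = (x^s-\b)M(V_i)$, so it factors through an $H$-module map $\widetilde\phi\colon V(i,\b) = M(V_i)/N_\b^1(i) \to M$ whose image is the $H$-submodule generated by $V_i$.

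Next I would argue $\widetilde\phi$ is injective. By Proposition~\ref{3.4}, $V(i,\b)$ is a simple $H$-module, so any nonzero $H$-module map out of it is injective; since $\widetilde\phi \neq 0$ (it is nonzero on $V_i$ by Lemma~\ref{4.2}-type reasoning, or simply because $V_i \neq 0$ maps isomorphically), $\widetilde\phi$ embeds a copy of the simple module $V(i,\b)$ into $M$. Thus $M$ contains $V(i,\b)$ as an $H$-submodule. The remaining task is to show this submodule is all of $M$, and here indecomposability enters: I would show that $M$ decomposes as a direct sum of copies of such submodules, or more directly that the inclusion is an isomorphism.

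To finish, note that every simple $kG$-submodule $W$ of $M$ generates (via the analogous construction) a simple $H$-submodule of $M$ isomorphic to some $V(j,\b)$ — the same $\b$, since $x^s$ acts as $\b$ on all of $M$. Because $M$ is semisimple as a $kG$-module, $M = \sum_W W$ summing over simple $kG$-submodules, hence $M = \sum V(j,\b)$ over the corresponding simple $H$-submodules; so $M$ is a sum of simple $H$-modules, i.e. $M$ is a semisimple $H$-module. An indecomposable semisimple module is simple, so $M$ is simple, and being a nonzero $x$-torsionfree simple $H$-module with $(x^s-\b)M=0$, Theorem~\ref{3.5}(2) (or Corollary~\ref{3.6}(2) together with Proposition~\ref{3.4}) gives $M \cong V(i,\b)$ for some $i \in I$.

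The main obstacle is the step showing $M$ is a \emph{sum} of simple $H$-submodules: one must check that the $H$-submodule generated by a simple $kG$-submodule $W \cong V_j$ is genuinely simple (not just a quotient of the simple $V(j,\b)$ failing to embed), which is exactly the content of Proposition~\ref{3.4} applied after verifying $(x^s-\b)$ annihilates it — this is immediate since $x^s$ is central and acts as $\b$ on $M$. Once that is in hand, passing from "sum of simples" plus "indecomposable" to "simple" is routine, and the identification with a specific $V(i,\b)$ is just Theorem~\ref{3.5}(2).
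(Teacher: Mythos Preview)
Your proposal is correct and follows essentially the same approach as the paper: both show that each simple $kG$-submodule of $M$ generates a simple $H$-submodule isomorphic to some $V(j,\b)$ (via the induced map from $M(V_j)$ factoring through $V(j,\b)$, which is simple by Proposition~\ref{3.4}), conclude that $M$ is a sum of simple $H$-submodules hence semisimple, and then use indecomposability to force $M$ to be simple. The only cosmetic difference is that the paper builds an explicit direct-sum decomposition $M=\bigoplus_{j=1}^m M_j$ by induction and then sets $m=1$, whereas you invoke the standard fact ``sum of simples $\Rightarrow$ semisimple'' and then cite Theorem~\ref{3.5}(2) for the identification.
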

\begin{proof}
Clearly, $M^x=0$. Let $U_1$ be a simple $kG$-submodule of $M$ and $M_1=HU_1$ be the $H$-submodule of $M$ generated by $U_1$. Then there is an $i_1\in I$ such that $U_1\cong V_{i_1}$ as $kG$-modules. Let $f_1: V_{i_1}\ra U_1$ be a $kG$-module isomorphism. Then the composition map
$$\phi_1: M(V_{i_1})=H\ot_{kG}V_{i_1}\xrightarrow{{\rm id}\ot f_1}H\ot_{kG}U_1\xrightarrow{\cdot}M_1,\ h\ot v\mapsto hf_1(v)$$
is an $H$-module epimorphism. Since $(x^s-\b)M=0$, $\phi_1(N^1_{\b}(i_1))=\phi_1((x^s-\b)M(V_{i_1}))=(x^s-\b)\phi_1(M(V_{i_1}))=(x^s-\b)M_1=0$.
Hence $\phi_1$ induces an $H$-module epimorphism $\ol{\phi_1}: V(i_1, \b)=M(V_{i_1})/N^1_{\b}(i_1)\ra M_1$, which must be an isomorphism since $V(i_1, \b)$ is a simple $H$-module. Now let $l\>1$ and suppose that we have found simple $H$-submodules $M_1, \cdots, M_l$ of $M$ such that the sum $\sum_{j=1}^lM_j$ in $M$ is direct and $M_j\cong V(i_j, \b)$ for some $i_j\in I$, $1\<j\<l$. If $\sum_{j=1}^lM_j\neq M$, then there is a simple $kG$-submodule $U_{l+1}$ of $M$ such that $U_{l+1}\nsubseteqq\sum_{j=1}^lM_j$. Let $M_{l+1}=HU_{l+1}$ be the $H$-submodule of $M$ generated by $U_{l+1}$. Then a similar argument as above shows that $M_{l+1}\cong V(i_{l+1}, \b)$ for some $i_{l+1}\in I$. Thus, $M_{l+1}$ is simple and $M_{l+1}\nsubseteqq\sum_{j=1}^lM_j$. Hence the sum $\sum_{j=1}^{l+1}M_j$ in $M$ is direct. Since $M$ is finite dimensional, there are finitely many simple $H$-submodules $M_1, \cdots, M_m$ of $M$ such that $M=\oplus_{j=1}^mM_j$ and $M_j\cong V(i_j, \b)$ for some $i_j\in I$, $1\<j\<m$. Since $M$ is indecomposable, $m=1$. This completes the proof.
\end{proof}

\begin{lemma}\label{4.8}
Assume $\b\in k^{\ti}$. Let $M$ be an indecomposable $H$-module with $M=M^{(\b)}$. Then each composition factor of $M$ is isomorphic to $V(i,\b)$ for some $i\in I$.
\end{lemma}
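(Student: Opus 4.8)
The plan is to argue directly at the level of composition factors, using the classification of simple $H$-modules from Corollary \ref{3.6}; indecomposability of $M$ will actually play no role in the argument (it is presumably imposed only because the lemma is to be applied to indecomposable modules).

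First I would upgrade the pointwise hypothesis to a uniform one. Since $\dim M<\infty$ and $M=M^{(\beta)}$, each vector in a $k$-basis $m_1,\dots,m_d$ of $M$ is annihilated by some power $(x^s-\beta)^{r_l}$; setting $N=\max_l r_l$, the $k$-linear operator $(x^s-\beta)^N$ (well defined since $x^s\in H$ acts $k$-linearly on $M$) vanishes on a basis and hence on all of $M$, so $(x^s-\beta)^N M=0$.

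Next, because $x^s$ is central in $H$, multiplication by $(x^s-\beta)^N$ is an $H$-module endomorphism of $M$, so it annihilates every subquotient of $M$, in particular every composition factor $S$. Such an $S$ is a finite-dimensional simple module over the algebraically closed field $k$, so by Schur's lemma the central element $x^s$ acts on $S$ as a scalar $c\in k$; then $(c-\beta)^N$ acts as zero on the nonzero space $S$, which forces $c=\beta\neq 0$. By Corollary \ref{3.6}(2) every simple $H$-module is either some $V_i$, on which $x$ and hence $x^s$ act as $0$, or some $V(j,\gamma)$ with $\gamma\in k^{\ti}$, on which $x^s$ acts as $\gamma$. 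Since $c=\beta\neq 0$, $S$ cannot be any $V_i$ (there $x^s-\beta$ acts as the nonzero scalar $-\beta$), hence $S\cong V(j,\gamma)$ with $\gamma=c=\beta$, that is, $S\cong V(j,\beta)$ for some $j\in I$; as $S$ was an arbitrary composition factor, this is the claim.

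I do not expect a genuine obstacle: the substance is just Schur's lemma together with the list of simples. The only point needing a little care is the first step, passing from the defining condition of $M^{(\beta)}$ (a separate exponent for each vector) to a single exponent $N$, and that is immediate from $\dim M<\infty$.
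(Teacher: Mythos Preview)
Your argument is correct and follows the same logical outline as the paper: pass the torsion condition to a composition factor, deduce that $x^s$ acts there as the scalar $\beta$, and then identify the simple. The paper's version is terser: for a composition factor $N$ it notes $N=N^{(\beta)}$, invokes \cite[Lemma~4.13]{WangYouChen} to get $(x^s-\beta)N=0$, and then applies Lemma~\ref{4.7} to conclude $N\cong V(i,\beta)$. Your route is slightly more self-contained: you replace the external citation by the direct Schur's-lemma computation of the scalar, and you replace the appeal to Lemma~\ref{4.7} by the classification of simples in Corollary~\ref{3.6}(2). Both approaches are equally valid; yours avoids the machinery in the proof of Lemma~\ref{4.7}, while the paper's reuses results already in hand. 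Your observation that indecomposability of $M$ is not needed is also correct.
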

\begin{proof}
Let $N$ be the composition factor of $M$. Then $N=N^{(\b)}$ by $M=M^{(\b)}$. By \cite[Lemma 4.13 ]{WangYouChen}, one knows that $(x^{s}-\b)N=0$. Then it follows from Lemma \ref{4.7} that $N\cong V(i,\b)$ for some $i\in I$.
\end{proof}

\begin{lemma}\label{4.9}
Assume $\b\in k^{\ti}$. Let $M$ be an $H$-module with $M=M^{(\b)}$. Then ${\rm rad}(M)=(x^{s}-\b)M$.
\end{lemma}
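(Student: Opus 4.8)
The plan is to prove the two inclusions $(x^{s}-\b)M\subseteq{\rm rad}(M)$ and ${\rm rad}(M)\subseteq(x^{s}-\b)M$ separately. Throughout I will use that $x^{s}$ is a central element of $H$ (since $|\chi|=s$), so that $(x^{s}-\b)M$ is genuinely an $H$-submodule of $M$, and that, $M$ being finite dimensional, $(x^{s}-\b)^{r}M=0$ for some $r$; I will also use repeatedly the trivial fact that if $M=M^{(\b)}$ then every subquotient $N$ of $M$ satisfies $N=N^{(\b)}$.

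For the inclusion $(x^{s}-\b)M\subseteq{\rm rad}(M)$, I would pass to the semisimple module $M/{\rm rad}(M)$ and show that $x^{s}-\b$ annihilates it. Every simple summand $S$ of $M/{\rm rad}(M)$ is a composition factor of $M$, hence a simple subquotient of $M=M^{(\b)}$, so $S=S^{(\b)}$. Since $k$ is algebraically closed and $x^{s}$ is central, Schur's lemma forces $x^{s}$ to act on $S$ by a scalar $\l$, and $S=S^{(\b)}$ then forces $(\l-\b)^{r}S=0$, i.e. $\l=\b$; thus $(x^{s}-\b)S=0$. (Alternatively one may decompose $M$ into indecomposables and quote Lemma~\ref{4.8}.) Hence $(x^{s}-\b)(M/{\rm rad}(M))=0$, which is exactly $(x^{s}-\b)M\subseteq{\rm rad}(M)$.

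For the reverse inclusion, I would look at $\ol M=M/(x^{s}-\b)M$, which satisfies $(x^{s}-\b)\ol M=0$. Writing $\ol M=\oplus_{j}\ol M_{j}$ as a direct sum of indecomposable $H$-modules, each summand $\ol M_{j}$ still satisfies $(x^{s}-\b)\ol M_{j}=0$ with $\b\in k^{\ti}$, so Lemma~\ref{4.7} applies and shows each $\ol M_{j}$ is simple. Therefore $\ol M$ is semisimple, and consequently ${\rm rad}(M)\subseteq(x^{s}-\b)M$. Combining the two inclusions yields ${\rm rad}(M)=(x^{s}-\b)M$. The only step carrying real content is the appeal to Lemma~\ref{4.7} to deduce that any module killed by $x^{s}-\b$ (with $\b\neq0$) is semisimple; I expect this to be the crux, while the centrality of $x^{s}$, Schur's lemma, and the passage to the quotients $M/{\rm rad}(M)$ and $M/(x^{s}-\b)M$ are routine bookkeeping.
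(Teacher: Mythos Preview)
Your proposal is correct and follows essentially the same route as the paper: for $(x^{s}-\b)M\subseteq{\rm rad}(M)$ the paper invokes Lemma~\ref{4.8} to kill $x^{s}-\b$ on $M/{\rm rad}(M)$ (your Schur's-lemma argument is a valid alternative), and for the reverse inclusion the paper, like you, passes to $M/(x^{s}-\b)M$ and uses the content of Lemma~\ref{4.7} (its proof shows any module annihilated by $x^{s}-\b$ is semisimple) to conclude.
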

\begin{proof}
Since $M/{\rm rad}(M)$ is semisimple, $(x^{s}-\b)(M/{\rm rad}(M))=0$ by Lemma \ref{4.8}. Hence $(x^s-\b)M\subseteq{\rm rad}(M)$. On the other hand, we have $(x^s-\b)(M/(x^s-\b)M)=0$. Hence it follows from the proof of Lemma \ref{4.7} that $M/(x^s-\b)M$ is semisimple. This implies ${\rm rad}(M)\subseteq (x^s-\b)M$, and so ${\rm rad}(M)=(x^{s}-\b)M$.
\end{proof}

\begin{proposition}\label{4.10}
Let $i\in I$, $\b\in k^{\ti}$ and $r\in\mathbb{N}^+$. Then $V_r(i,\b)$ is uniserial and indecomposable. Moreover, ${\rm l}(V_r(i,\b))=r$ and the composition factors of $V_r(i,\b)$ are all isomorphic to $V(i,\b)$.
\end{proposition}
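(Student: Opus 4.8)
The plan is to imitate the proofs of Proposition~\ref{4.3} and Theorem~\ref{4.6}, with the nilpotent operator ``multiplication by $x$'' replaced by ``multiplication by the central element $x^s-\b$'', and to argue by induction on $r$. Write $M=V_r(i,\b)=M(V_i)/N^r_{\b}(i)$. Since $\chi^s=\e$, the element $x^s$ (hence $x^s-\b$) is central in $H$, so $N^r_{\b}(i)=(x^s-\b)^rM(V_i)$ and $(x^s-\b)^rM=0$; in particular $M=M^{(\b)}$, and Lemma~\ref{4.9} gives ${\rm rad}(M)=(x^s-\b)M$.

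The computational core is the isomorphism ${\rm rad}(M)\cong V_{r-1}(i,\b)$. Recall $M(V_i)=H\ot_{kG}V_i=\oplus_{l\>0}x^l\ot V_i$ as a vector space (since $H$ is free over $kG$ on $\{x^l\mid l\>0\}$), with $x^s$ acting by $x^l\ot v\mapsto x^{l+s}\ot v$. Comparing top-degree terms shows that multiplication by $x^s-\b$ is an injective $H$-module endomorphism $\theta$ of $M(V_i)$, with image $(x^s-\b)M(V_i)=N^1_{\b}(i)$. Being injective, $\theta$ maps $N^{r-1}_{\b}(i)$ bijectively onto $N^r_{\b}(i)$, hence descends to an $H$-module isomorphism
$$V_{r-1}(i,\b)=M(V_i)/N^{r-1}_{\b}(i)\ \xrightarrow{\ \sim\ }\ N^1_{\b}(i)/N^r_{\b}(i)=(x^s-\b)M={\rm rad}(M).$$
On the other hand $M/{\rm rad}(M)=M(V_i)/N^1_{\b}(i)=V(i,\b)$, which is simple by Proposition~\ref{3.4}.

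With these two facts the induction on $r$ is routine. For $r=1$, $M=V(i,\b)$ is simple and all assertions hold. Let $r\>2$. Applying the inductive hypothesis to $V_{r-1}(i,\b)\cong{\rm rad}(M)$, the submodule ${\rm rad}(M)$ is uniserial and indecomposable of length $r-1$, with every composition factor isomorphic to $V(i,\b)$. Since $M/{\rm rad}(M)$ is simple, any submodule $N$ of $M$ with $N\not\subseteq{\rm rad}(M)$ satisfies $N+{\rm rad}(M)=M$, whence $N=M$ by Nakayama's lemma (recall $M$ is finite dimensional). Thus every proper submodule of $M$ lies in ${\rm rad}(M)$, and since the submodules of ${\rm rad}(M)$ form a chain, $M$ is uniserial, hence indecomposable. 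Finally, as length is additive, ${\rm l}(M)={\rm l}({\rm rad}(M))+1=r$, and the unique composition series of $M$ has top $V(i,\b)$ and lower factors those of ${\rm rad}(M)\cong V_{r-1}(i,\b)$, all isomorphic to $V(i,\b)$.

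The one genuine obstacle is the second paragraph, namely establishing ${\rm rad}(V_r(i,\b))\cong V_{r-1}(i,\b)$; everything afterwards is formal bookkeeping with radicals and composition lengths. An alternative route, parallel to the proof of Theorem~\ref{4.6}, would avoid the induction and instead analyse the submodule lattice of $M=\oplus_{j=0}^{rs-1}x^jV_i$ directly through the central operator $x^s-\b$, applying \cite[Lemma~4.1]{WangYouChen} with $\phi$ the action of $x^s-\b$ once the socle and radical of $M$ have been identified via Lemmas~\ref{4.7} and~\ref{4.9}.
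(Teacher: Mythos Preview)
Your argument is correct. The difference from the paper's proof is organisational rather than substantive: you proceed by induction on $r$, exploiting the isomorphism ${\rm rad}(V_r(i,\b))\cong V_{r-1}(i,\b)$ obtained from the injectivity of multiplication by $x^s-\b$ on $M(V_i)$, and then use Nakayama to conclude uniseriality. The paper instead works directly: it observes (by iterating Lemma~\ref{4.9}) that ${\rm rad}^j(V_r(i,\b))=(x^s-\b)^jV_r(i,\b)$ for all $j$, and then identifies every radical layer ${\rm rad}^j/{\rm rad}^{j+1}$ with $V(i,\b)$ in one stroke, via the surjection $M(V_i)\to(x^s-\b)^jV_r(i,\b)$ given by $m\mapsto(x^s-\b)^j\pi(m)$. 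Both proofs rest on the same ingredients (centrality of $x^s-\b$, Lemma~\ref{4.9}, and Proposition~\ref{3.4}); yours packages the lower layers into a single inductive call, while the paper's avoids induction but repeats essentially the same computation $r$ times. Your closing remark about the alternative route through \cite[Lemma~4.1]{WangYouChen} is also apt, though neither you nor the paper actually needs it here.
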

\begin{proof}
Since $(x^s-\b)^rV_r(i,\b)=0$, $V_r(i,\b)=V_r(i,\b)^{(\b)}$. It follows from Lemma \ref{4.9} that ${\rm rad}^j(V_r(i,\b))=(x^s-\b)^jV_r(i,\b)$ for any $j\>0$. Clearly, $(x^s-\b)^{r-1}V_r(i,\b)\neq 0$. Hence the series
$$0\subset(x^s-\b)^{r-1}V_r(i,\b)\subset(x^s-\b)^{r-2}V_r(i,\b)\subset\cdots
\subset(x^s-\b)V_r(i,\b)\subset V_r(i,\b)$$
is the radical series of $V_r(i, \b)$, and so ${\rm rl}(V_r(i, \b))=r$. Let $\pi: M(V_i)\ra V_r(i, \b)$ be the canonical $H$-module epimorphism. Let $0\<j\<r-1$. Since $x^s-\b$ is a central element of $H$, the map $\psi: V_r(i, \b)\ra(x^s-\b)^jV_r(i, \b), \ v\mapsto (x^s-\b)^jv$ is an $H$-module epimorphism.
Hence the composition map $\phi=\psi\circ\pi$ is an $H$-module epimorphism from $M(V_i)$ to $(x^s-\b)^jV_r(i, \b)$. Since $\phi(N_{\b}^1(i))=\phi((x^s-\b)M(V_i))=(x^s-\b)\phi(M(V_i))=(x^s-\b)^{j+1}V_r(i, \b)$, $\phi$ induces an $H$-module epimorphism $\ol{\phi}: V(i, \b)=M(V_i)/N_{\b}^1(i)\ra (x^s-\b)^{j}V_r(i, \b)/(x^s-\b)^{j+1}V_r(i, \b)$.
By Proposition \ref{3.4}, $V(i,\b)$ is simple. Hence $\ol{\phi}$ must be an isomorphism. Thus, the above radical series of $V_r(i, \b)$ is a composition series. It follows that $V_r(i,\b)$ is uniserial and indecomposable. Moreover, ${\rm l}(V_r(i, \b))={\rm rl}(V_r(i, \b))=r$ and each composition factor of $V_r(i, \b)$ is isomorphic to $V(i, \b)$.
\end{proof}

\begin{theorem}\label{4.11}
Let $M$ be an indecomposable $H$-module. Then $M\cong V_t(i)$ for some $i\in I$ and $t\in\mathbb{N}^+$, or $M\cong V_r(i,\b)$ for some $i\in I$, $\b\in k^{\ti}$ and $r\in\mathbb{N}^+$. Moreover, $M$ is uniserial.
\end{theorem}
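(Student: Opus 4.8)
The plan is to use the centrality of $x^s$ in $H$ to reduce the classification to the two families of modules already constructed in this section. Since $M$ is finite dimensional and $x^s$ is central, we have the generalized eigenspace decomposition $M=\bigoplus_{\b\in k}M^{(\b)}$ with each $M^{(\b)}$ an $H$-submodule (by \cite[Lemma 4.10]{WangYouChen}, which still holds in the present setting); as $M$ is indecomposable, $M=M^{(\b)}$ for a single scalar $\b\in k$. It then suffices to treat the cases $\b=0$ and $\b\in k^{\ti}$ separately, and the case $\b\in k^{\ti}$ is where the real work lies.

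Assume first $\b=0$, so that $x$, and hence $x^s$, acts nilpotently on $M$. Being central, $x^s$ acts as zero on each composition factor $N$ of $M$, since the kernel of $x^s$ on the simple module $N$ is a nonzero submodule, hence all of $N$. A nonzero $x$-torsionfree module is never annihilated by $x^s$ (on such a module $x$, so also $x^s$, is injective), so $N$ is $x$-torsion and therefore $N\cong V_j$ for some $j\in I$ by Theorem~\ref{3.5}(1). Thus every composition factor of $M$ is of the form $V_j$, so Theorem~\ref{4.6} applies and gives $M\cong V_t(i)$ for some $i\in I$ and $t\in\mathbb{N}^+$, which is uniserial by Proposition~\ref{4.3}.

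Now assume $\b\in k^{\ti}$, so $M=M^{(\b)}$ and, by Lemma~\ref{4.8}, every composition factor of $M$ is isomorphic to some $V(i,\b)$. The first step is to show $M$ is uniserial by running the proof of Theorem~\ref{4.6} with the $H$-module endomorphism $\psi\colon M\ra M$, $m\mapsto(x^s-\b)m$ (which is $H$-linear because $x^s-\b$ is central) in place of multiplication by $x$. By Lemma~\ref{4.9}, $\psi M=(x^s-\b)M={\rm rad}(M)$. For the kernel, $\ker\psi$ is annihilated by $x^s-\b$, hence is a direct sum of simple modules $V(i,\b)$ by the construction in the proof of Lemma~\ref{4.7}, so $\ker\psi\subseteq{\rm soc}(M)$; conversely each simple submodule of $M$ is some $V(i,\b)$, on which $x^s-\b$ vanishes, whence ${\rm soc}(M)\subseteq\ker\psi$. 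With $\psi M={\rm rad}(M)$ and $\ker\psi={\rm soc}(M)$ in hand, \cite[Lemma 4.1]{WangYouChen}, applied to the indecomposable module $M$ exactly as in Theorem~\ref{4.6}, yields that $M$ is uniserial. Re-establishing these two socle/radical identities for the endomorphism $(x^s-\b)\cdot$ is the point requiring care, since the simple ``layers'' $V(i,\b)$ now have dimension $s\dim V_i>1$; once this is done the remainder only transcribes the proofs of Theorem~\ref{4.6} and Proposition~\ref{4.10}.

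It remains to identify $M$. Since $M$ is uniserial, ${\rm rad}^j(M)=\psi^jM=(x^s-\b)^jM$ (as in the proof of Proposition~\ref{4.10}), and multiplication by the central element $x^s-\b$ induces surjections ${\rm rad}^j(M)/{\rm rad}^{j+1}(M)\ra{\rm rad}^{j+1}(M)/{\rm rad}^{j+2}(M)$ between simple modules, which are isomorphisms whenever the target is nonzero; hence all composition factors of $M$ are isomorphic to the simple module $M/{\rm rad}(M)$, which by Lemma~\ref{4.8} is $V(i,\b)$ for a single $i\in I$. Setting $r={\rm l}(M)={\rm rl}(M)$ then gives $\dim M=rs\dim V_i=\dim V_r(i,\b)$. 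Since $kG$ is semisimple, write $M={\rm rad}(M)\oplus C$ as $kG$-modules with $C\cong V(i,\b)$ as a $kG$-module, pick a $kG$-submodule $V\cong V_i$ of $C$, and set $N=HV$; then $V$ maps injectively into the simple $H$-module $M/{\rm rad}(M)$, so $N+{\rm rad}(M)=M$, and Nakayama's lemma forces $N=M$. A $kG$-isomorphism $V_i\cong V$ induces an $H$-epimorphism $M(V_i)\ra M$ which annihilates $(x^s-\b)^rM(V_i)=N^r_{\b}(i)$, hence factors through an $H$-epimorphism $V_r(i,\b)\ra M$; comparing the dimensions computed above, this is an isomorphism, and $M$ is uniserial by Proposition~\ref{4.10}.
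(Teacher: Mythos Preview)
Your proof is correct and follows essentially the same route as the paper: split into the cases $\b=0$ and $\b\in k^{\ti}$ via the generalized eigenspace decomposition for the central element $x^s$, invoke Theorem~\ref{4.6} in the first case, and in the second use the endomorphism $(x^s-\b)\cdot$ together with \cite[Lemma 4.1]{WangYouChen} to establish uniseriality before constructing the isomorphism with $V_r(i,\b)$. The only cosmetic difference is in the final identification step: you select the generating $kG$-submodule $V$ inside a $kG$-complement to ${\rm rad}(M)$ and conclude via Nakayama's lemma and a dimension count, whereas the paper picks $V$ with $(x^s-\b)^{r-1}V\neq0$ and concludes by comparing lengths.
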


\begin{proof}
By \cite[Corollary 4.12]{WangYouChen}, there exists a monic irreducible polynomial $f(y)\in k[y]$ such that $M=M^{(f)}$. Since $k$ is an algebraically closed field, $f(y)=y$ or $f(y)=y-\b$ for some $\b\in k^{\ti}$.

Case 1: $f(y)=y$. Since $M$ is finite dimensional, $x^{rs}M=0$ for some integer $r\>1$. Let $V$ be a composition factor of $M$. Then $x^{rs}V=0$, and hence $V^x\neq 0$. Since $V$ is simple, $V^x=V$. By Theorem \ref{3.5}, $V\cong V_i$ for some $i\in I$. It follows from Theorem \ref{4.6} that $M\cong V_t(i)$ for some integer $t\>1$ and $i\in I$. In this case, $M$ is uniserial by Proposition \ref{4.3}.

Case 2: $f(y)=y-\b$. In this case, $M=M^{(\b)}$. It follows from Lemma \ref{4.8} that each composition factor of $M$ is isomorphic to $V(i,\b)$ for some $i\in I$.
If $rl(M)=1$, then $M$ is simple, and so $M\cong V(i,\b)$ for some $i\in I$.

Now assume ${\rm rl}(M)=r>1$. Then $(x^s-\b)^rM=0$ and $(x^s-\b)^{r-1}M\neq 0$ by Lemma \ref{4.9}. Define a liner map $\phi: M\rightarrow M$ by $\phi(m)=(x^s-\b)m$, $m\in M$. Then $\phi$ is a module endomorphism of $M$ since $x^s-\b$ is a central element of $H$. For any submodule $N$ of $M$, $\phi(N)={\rm rad}(N)$ by Lemma \ref{4.9}, and $\phi^{-1}(N)$ is obviously a submodule of $M$.
If $V$ is a simple submodule of $M$, then $(x^s-\b)V={\rm rad}(V)=0$ by Lemma \ref{4.9}, and hence $V\subseteq{\rm Ker}(\phi)$. Thus, ${\rm soc}(M)\subseteq{\rm Ker}(\phi)$. On the other hand, by Lemma \ref{4.7}, ${\rm Ker}(\phi)$ is semisimple, and hence ${\rm Ker}(\phi)\subseteq{\rm soc}(M)$. Therefore, ${\rm Ker}(\phi)={\rm soc}(M)$. It follows from \cite[Lemma 4.1(c)]{WangYouChen} that $M$ is uniserial. Hence ${\rm l}(M)={\rm rl}(M)=r$. Since $M$ is semisimple as a $kG$-module, $M$ is equal to a direct sum of some simple $kG$-submodules of $M$. Then from $(x^s-\b)^{r-1}M\neq 0$, one knows that there is a simple $kG$-submodule $V$ such that $(x^s-\b)^{r-1}V\neq 0$. From $(x^s-\b)^rM=0$, one gets $(x^s-\b)^rV=0$. Let $N=HV$ be the $H$-submodule of $M$ generated by $V$. Then $(x^s-\b)^{r-1}N\neq 0$ and $(x^s-\b)^rN=H(x^s-\b)^rV=0$. By Lemma \ref{4.9}, ${\rm rl}(N)=r$. Since $M$ is uniserial, so is $N$. Hence ${\rm l}(N)={\rm rl}(N)=r={\rm l}(M)$, and so $M=N=HV$. Since $V$ is a simple $kG$-module, $V\cong V_i$ as $kG$-modules for some $i\in I$. Let $f: V_{i}\ra V$ be a $kG$-module isomorphism. Then one gets an $H$-module epimorphism
$$\phi: M(V_{i})=H\ot_{kG}V_{i}\xrightarrow{{\rm id}\ot f}H\ot_{kG}V\xrightarrow{\cdot}M,\ h\ot v\mapsto hf(v).$$
Since $\phi((x^s-\b)^rM(V_i))=(x^s-\b)^rM=0$, $\phi$ induces an $H$-module epimorphism $\ol{\phi}$ from $V_r(i, \b)=M(V_i)/N_{\b}^r(i)$ to $M$. By Proposition \ref{4.10}, ${\rm l}(V_r(i, \b))=r={\rm l}(M)$, and hence $\ol{\phi}$ is an isomorphism.
\end{proof}

\begin{proposition}\label{4.12}
Let $i, j\in I$, $\a, \b\in k^{\ti}$ and $r, t\in\mathbb{N}^+$. Then $V_r(i,\a)\cong V_t(j,\b)$ if and only if $r=t$, $\a=\b$ and $[i]=[j]$.
\end{proposition}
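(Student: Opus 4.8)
The plan is to prove both implications by extracting invariants from the uniserial structure established in Proposition~\ref{4.10}. The ``if'' direction is the easier one: assuming $r=t$, $\a=\b$ and $[i]=[j]$, I would first use $[i]=[j]$ to fix an integer $u$ with $0\<u\<s-1$ and $V_i=V_{\s^u(j)}$, hence a $kG$-module isomorphism $\phi\colon V_i\ra x^uV_j\subseteq V_t(j,\b)$. Then I would lift $\phi$ through the presentation $M(V_i)=H\ot_{kG}V_i$ to an $H$-module map $\psi\colon M(V_i)\ra V_t(j,\b)$, $h\ot v\mapsto h\phi(v)$, and check that $\psi(N^r_\b(i))=\psi((x^s-\b)^rM(V_i))=(x^s-\b)^r x^u V_j=x^u(x^s-\b)^rV_j=0$ (using $\a=\b$ and that $x^s-\b$ is central), so $\psi$ descends to $\ol\psi\colon V_r(i,\b)\ra V_t(j,\b)$. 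Since $r=t$ gives $\mathrm{l}(V_r(i,\b))=\mathrm{l}(V_t(j,\b))$ by Proposition~\ref{4.10}, it suffices to see $\ol\psi\neq 0$; but $\psi$ restricted to $V_i$ is $\phi$, which is injective, so $\ol\psi$ is nonzero and hence an isomorphism between uniserial modules of equal length.

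For the ``only if'' direction, suppose $\theta\colon V_r(i,\a)\xrightarrow{\sim}V_t(j,\b)$. The length invariant from Proposition~\ref{4.10} immediately forces $r=\mathrm{l}(V_r(i,\a))=\mathrm{l}(V_t(j,\b))=t$. For $\a=\b$, I would argue exactly as in the second paragraph of the proof of Proposition~\ref{3.4} applied to the top composition factors, or more directly: the scalar by which $x^s$ acts is not constant when $r>1$, so instead I would pass to the socle (or head). By Proposition~\ref{4.10} every composition factor of $V_r(i,\a)$ is $V(i,\a)$ and every composition factor of $V_t(j,\b)$ is $V(j,\b)$; since $\theta$ carries composition factors to composition factors, $V(i,\a)\cong V(j,\b)$, and then Proposition~\ref{3.4} gives $\a=\b$ and $[i]=[j]$ in one stroke.

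Thus the whole ``only if'' part reduces to the two facts ``length is $r$'' and ``all composition factors are $V(i,\a)$'', both already packaged in Proposition~\ref{4.10}, combined with the classification of the simple torsionfree modules in Proposition~\ref{3.4}. The only genuinely computational point, and the one I expect to be the main obstacle, is the verification in the ``if'' direction that the lifted map $\psi$ kills $N^r_\b(i)=(x^s-\b)^rM(V_i)$ and that the induced map is nonzero --- i.e., bookkeeping that $\psi$ is well defined as an $H$-map out of $M(V_i)$, that it sends the generating subspace $V_i$ isomorphically onto $x^uV_j$, and that centrality of $x^s-\b$ lets the power $(x^s-\b)^r$ slide past $x^u$. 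None of this is deep, but it is where care is needed; everything else follows formally from the uniserial structure and the earlier classification results.
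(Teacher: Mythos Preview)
Your approach is essentially the paper's, and the ``only if'' direction is exactly what the paper does (length via Proposition~\ref{4.10}, then composition factors plus Proposition~\ref{3.4}). There is, however, a genuine gap in your ``if'' direction: the claim that a nonzero $H$-map between uniserial modules of equal length is automatically an isomorphism is false in general (e.g.\ multiplication by $x$ on $k[x]/(x^2)$ over $k[x]$). Knowing that $\ol\psi|_{V_i}=\phi$ is injective only gives $\ol\psi\neq 0$; it does not force $\ol\psi$ to be injective or surjective.

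What is missing, and what the paper actually checks, is \emph{surjectivity}: the image of $\ol\psi$ is the $H$-submodule of $V_t(j,\b)$ generated by $x^uV_j$, i.e.\ $Hx^uV_j=x^uHV_j=x^uV_t(j,\b)$; since $\b\in k^{\ti}$ the module $V_t(j,\b)$ is $x$-torsionfree, so $x$ acts bijectively on this finite-dimensional space and $x^uV_t(j,\b)=V_t(j,\b)$. Surjectivity together with $\mathrm{l}(V_r(i,\b))=\mathrm{l}(V_t(j,\b))$ then yields the isomorphism. A related minor imprecision: $\psi((x^s-\b)^rM(V_i))=(x^s-\b)^r\psi(M(V_i))$, and $\psi(M(V_i))=Hx^uV_j$, not $x^uV_j$; your conclusion $=0$ is nonetheless correct because $(x^s-\b)^r=(x^s-\b)^t$ kills all of $V_t(j,\b)$.
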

\begin{proof}
If $V_r(i,\a)\cong V_t(j,\b)$, then $r={\rm l}(V_r(i, \a))={\rm l}(V_t(j, \b))=t$ and $V(i,\a)\cong V(j,\b)$ by Proposition \ref{4.10}, and consequently $\a=\b$ and $[i]=[j]$ by Proposition \ref{3.4}. Conversely, assume that $r=t$, $\a=\b$ and $[i]=[j]$. We need to show $V_t(i,\b)\cong V_t(j,\b)$. By $[i]=[j]$, $i=\s^n(j)$ for some $0\<n\<s-1$. Note that $V_t(j, \b)$ is generated, as an $H$-module, by $V_j$, and $V_t(j, \b)=\oplus_{l=0}^{ts-1}x^lV_j$. From $(x^s-\b)^tV_t(j, \b)=0$, one gets $V_t(j, \b)^x=0$. Hence $x^nV_j$ is a nonzero $kG$-submodule of $V_t(j, \b)$ and $x^nV_j\cong V_{\s^n(j)}=V_i$. Let $M$ be the $H$-submodule of $V_t(j, \b)$ generated by $x^nV_j$. Then $M=H(x^nV_j)=x^nHV_j=x^nV_t(j, \b)=V_t(j, \b)$ by $V_t(j, \b)^x=0$. Let $f: V_i\ra x^nV_j$ be a $kG$-module isomorphism. Then an argument similar to the proof of Theorem \ref{4.11} shows that $f$ can be extended to an $H$-module isomorphism from $V_t(i, \b)$ to $M=V_t(j, \b)$.
\end{proof}

\begin{corollary}\label{4.13}
Assume that $|q|=|\chi|=s$. Then
$$\{V_t(i),V_t(j,\b)|i \in I,[j]\in I_0,\b\in k^{\ti}, t\in\mathbb{N}^+\}$$ is a representative set of isomorphic classes of finite dimensional indecomposable $H$-modules.
\end{corollary}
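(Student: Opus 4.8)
The plan is to assemble the statement from the three structural results of this section: the classification of finite-dimensional indecomposable $H$-modules in Theorem~\ref{4.11}, and the isomorphism criteria of Corollary~\ref{4.4} and Proposition~\ref{4.12}. Concretely, there will be three things to check: every module appearing in the displayed set is indecomposable; this set exhausts all finite-dimensional indecomposable $H$-modules up to isomorphism; and distinct members of it are pairwise non-isomorphic.

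First I would dispose of indecomposability and exhaustiveness. Proposition~\ref{4.3} gives that each $V_t(i)$ ($i\in I$, $t\in\mathbb{N}^+$) is indecomposable (in fact uniserial), and Proposition~\ref{4.10} gives the same for each $V_t(j,\b)$ ($j\in I$, $\b\in k^{\ti}$, $t\in\mathbb{N}^+$); so every entry of the list is a genuine indecomposable. Conversely, given an arbitrary finite-dimensional indecomposable $H$-module $M$, Theorem~\ref{4.11} yields $M\cong V_t(i)$ for some $i\in I$, $t\in\mathbb{N}^+$, or $M\cong V_r(i,\b)$ for some $i\in I$, $\b\in k^{\ti}$, $r\in\mathbb{N}^+$. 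In the second case I would replace $i$ by the chosen representative $j$ of its class $[i]\in I_0$; since $[i]=[j]$, Proposition~\ref{4.12} gives $V_r(i,\b)\cong V_r(j,\b)$, which lies in the list. Hence the list is exhaustive.

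Finally I would verify non-redundancy. If $V_t(i)\cong V_n(j)$ with $i,j\in I$, then Corollary~\ref{4.4} forces $t=n$ and $i=j$, i.e.\ the same entry. If $V_t(i,\a)\cong V_n(j,\b)$ with $[i],[j]\in I_0$, then Proposition~\ref{4.12} forces $t=n$, $\a=\b$ and $[i]=[j]$; since the second family is indexed by one representative per class in $I_0$, this is again the same entry. And a module of the form $V_t(i)$ can never be isomorphic to one of the form $V_r(j,\b)$: the operator $x$ acts nilpotently on $V_t(i)$ because $x^tV_t(i)=0$, whereas on $V_r(j,\b)$ the relation $(x^s-\b)^rV_r(j,\b)=0$ with $\b\neq0$ shows that $x^s$, hence $x$, acts invertibly, so $V_r(j,\b)$ is $x$-{\it torsionfree} (equivalently, by Propositions~\ref{4.3} and~\ref{4.10} the composition factors of the two modules are of the incompatible types $V_i$ and $V(j,\b)$, which differ by Theorem~\ref{3.5}). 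Combining the three points proves the corollary. I do not expect a serious obstacle here; the only point requiring a little care is the bookkeeping that passes from the index set $I$ used in Theorem~\ref{4.11} to the equivalence-class set $I_0$ appearing in the statement, together with the observation that the $x$-torsion family $\{V_t(i)\}$ and the $x$-torsionfree family $\{V_t(j,\b)\}$ never overlap.
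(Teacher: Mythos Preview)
Your argument is correct and is precisely the approach the paper takes: its proof simply cites Proposition~\ref{4.3}, Corollary~\ref{4.4}, Proposition~\ref{4.10}, Theorem~\ref{4.11} and Proposition~\ref{4.12}, and you have spelled out exactly how these combine. Your extra remark separating the two families via $x$-torsion versus $x$-torsionfree is a clean way to justify the one point the paper leaves implicit.
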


\begin{proof}
It follows from Proposition \ref{4.3}, Corollary \ref{4.4}, Proposition \ref{4.10}, Theorem \ref{4.11} and Proposition \ref{4.12}.
\end{proof}

\section{\bf Decomposition rules for tensor product modules}\selabel{5}

Throughout this section, assume that $k$ is of characteristic zero and $G$ is a finite group.
We also assume $|q|=|\chi|=s$. In this case, the group algebra $kG$ is finite dimensional and semisimple,
and  $1<s<\infty$.
In this section, we investigate the decomposition rules for tensor product modules over $H$.

By Corollary \ref{4.13}, one knows that
$$\{V_t(i),V_t(j,\b)|i \in I,[j]\in I_0,\b\in k^{\ti}, t\in\mathbb{N}^+\}$$
is a representative set of isomorphic classes of finite dimensional indecomposable $H$-modules.

As stated in \seref{3}, mod$kG$ is a tensor subcategory of mod$H$.

Recall from \cite{WangYouChen} that an $H$-module $M$ is a {\it weight module} if $M=\oplus_{\l\in\hat{G}}M_{(\l)}$, where $M_{(\l)}=\{m\in M|gm=\l(g)m, \forall g\in G\}$ for any $\l\in\hat{G}$. Let wmod$H$ be the full subcategory of mod$H$ consisting of all finite dimensional weight $H$-modules. Then wmod$H$ is a tensor subcategory of mod$H$ \cite{WangYouChen}. By \cite[Corollary 4.20]{WangYouChen}, $$\{V_t(\l),V_t(\theta,\b)|\l\in\hat{G}, [\theta]\in\hat{G}/\langle\chi\rangle,\b\in k^{\ti}, t\in\mathbb{N}^+\}$$
is a representative set of isomorphic classes of finite dimensional indecomposable weight modules over $H$.

\begin{convention}\label{5.1}
For any $i\in I$, there is a scalar $\om_i\in k^{\ti}$ such that $av=\om_iv$ for all $v\in V_i$ since $a\in Z(G)$ and $V_i$ is a simple $kG$-module. When $i=\l\in\hat{G}$, $\om_{\l}=\l(a)$.

Let $N_{i,j}^l=[V_i\ot V_j:V_l]\in\mathbb N$ be the multiplicity of $V_l$ in a composition series of $V_i\ot V_j$, $i,j, l\in I$. Then $N_{j,i}^l=N_{i,j}^l$ and $V_i\ot V_j\cong\oplus_{l\in I}N_{i,j}^lV_l$ in {\rm mod}$kG$ (or equivalently, in {\rm mod}$H$) since $kG$ is semisimple.
\end{convention}

\begin{lemma}\label{5.2}
Let $i\in I$, $t\in \mathbb{N}^+$ and $\b \in k^{\ti}$. Then
\begin{enumerate}
\item[(1)] $V_i\otimes V_t(\e)\cong V_t(\e)\otimes V_i\cong V_t(i)$;
\item[(2)] $V_i\otimes V_t(\e,\b)\cong V_t(i,\b)$;
\item[(3)] $V_t(\e,\b)\otimes V_i\cong V_t(i,\om^s_i\b)$.
\end{enumerate}
\end{lemma}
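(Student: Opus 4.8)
The plan is to prove all three isomorphisms by exhibiting explicit $kG$-module isomorphisms between the underlying spaces and checking compatibility with the action of $x$, exploiting the fact that $V_t(\e)$, $V_t(\e,\b)$, $V_t(i)$, $V_t(i,\b)$ all come from the universal construction $M(V_\bullet)$ and are generated over $H$ by their degree-zero piece. First I would recall from Convention~\ref{5.1} that $V_i\ot V_j\cong\oplus_l N_{i,j}^l V_l$ in ${\rm mod}\,kG$ since $kG$ is semisimple, and that $a$ acts on $V_i$ by the scalar $\om_i$. The key structural observation for all three parts is that $x$ acts on a tensor product $U\ot W$ of $H$-modules via $\Delta(x)=x\ot a+1\ot x$, i.e.\ $x\cdot(u\ot w)=xu\ot aw+u\ot xw$; in each of our cases one of the two tensor factors (either $V_i$ with $xV_i=0$, or $V_t(\e)$-type modules where $a$ acts by $1$) simplifies this formula drastically.

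For part (1): in $V_i\ot V_t(\e)$ we have $xV_i=0$ and $a$ acts as the identity on $V_t(\e)$ (since $\e(a)=1$), so $x\cdot(v\ot w)=v\ot xw$. Writing $V_t(\e)=\oplus_{j=0}^{t-1}x^j V_\e$ with $V_\e$ one-dimensional spanned by $w_0$, the space $V_i\ot V_t(\e)$ has $kG$-submodules $V_i\ot x^j V_\e\cong V_i$ (as $x^jV_\e\cong V_{\s^j(\e)}=V_\e$ because $\chi^j\ne\e$ is irrelevant here — actually $V_\e$ is the trivial character's module only when... let me just say $x^jV_\e\cong V_\e$ as $kG$-modules since $\sigma$ fixes nothing forces $|\chi|<\infty$, but in $V_t(\e)$ the graded pieces are the twists $V_{\chi^j}$). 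More carefully: $V_i\ot x^jV_\e\cong V_i\ot V_{\chi^j}\cong V_{\s^j(i)}$, and $x$ maps the $j$-th graded piece isomorphically onto the $(j{+}1)$-st for $j<t-1$ and kills the top piece. This is exactly the defining structure of $V_t(i)=M(V_i)/J_t(i)$, so sending $v\ot w_0\in V_i\ot V_\e$ to $v\in V_i\subseteq V_t(i)$ and extending by the $H$-action gives a well-defined epimorphism $V_i\ot V_t(\e)\to V_t(i)$ which is an isomorphism by dimension count ($t\,{\rm dim}V_i$ on both sides). The isomorphism $V_i\ot V_t(\e)\cong V_t(\e)\ot V_i$ is the general commutativity of $V_i\ot V_\l\cong V_\l\ot V_i$-type arguments noted in Section~\ref{3}, now with $V_\l$ replaced by the uniserial $V_t(\e)$; alternatively one checks the flip composed with the symmetry of $\Delta$ works because $a$ acts trivially on the $V_t(\e)$ factor.

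For parts (2) and (3): here $V_t(\e,\b)$ is $x$-torsionfree with $x^s$ acting by $\b$, and again $a$ acts trivially on it, while on $V_i$ we have $xV_i=0$ but $a$ acts by $\om_i\ne 1$ in general. In $V_i\ot V_t(\e,\b)$ the formula $x\cdot(v\ot w)=v\ot xw$ still holds (as $xV_i=0$), so $x^s$ acts by $\b$ on the whole space, and the same generation/dimension argument identifies it with $V_t(i,\b)$. In $V_t(\e,\b)\ot V_i$ the roles are reversed: $x\cdot(w\ot v)=xw\ot av+w\ot xv=xw\ot av=\om_i(xw\ot v)$, so $x$ acts as $\om_i$ times "$x\ot{\rm id}$", whence $x^s$ acts as $\om_i^s\b$ — this is precisely the source of the twist in (3), giving $V_t(\e,\b)\ot V_i\cong V_t(i,\om_i^s\b)$ after the analogous generation-and-dimension argument (note $\om_i^s\b\in k^\ti$ so this is a legitimate module of the stated form). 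The main obstacle I anticipate is purely bookkeeping: verifying that the naive map "degree-zero piece to degree-zero piece" genuinely extends to an $H$-module map (i.e.\ that the relation $f_t(x^s)\cdot(\text{stuff})=0$ defining the quotient is respected), and keeping straight which graded piece of the tensor product corresponds to which $x^j V_i$; once the $x$-action formula above is in hand this is routine, and the dimension count $t\,{\rm dim}\,V_i$ closes each case since the constructed surjections are between modules of equal finite dimension.
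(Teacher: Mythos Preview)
Your overall strategy---compute how $x$ acts on the tensor product via $\Delta(x)=x\ot a+1\ot x$, then use generation by the degree-zero piece plus a dimension count---is exactly the paper's approach; the paper writes down the explicit map $\phi(v_j\ot v)=\om_i^{-j}x^jv$ for part (3) and checks the $x$-action on basis elements directly, which is just the universal-property argument made concrete.

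However, there are genuine errors in your proposal that you need to repair. First, the claim ``$a$ acts as the identity on $V_t(\e)$'' is false: on the graded piece $x^jV_\e\cong V_{\chi^j}$ the element $a$ acts by $q^j$, not by $1$. This does not hurt your computation of $x\cdot(v\ot w)=v\ot xw$ in $V_i\ot V_t(\e)$ (that term vanishes because $xV_i=0$, not because of how $a$ acts on the second factor), but it invalidates your ``flip'' argument for $V_t(\e)\ot V_i\cong V_i\ot V_t(\e)$: the plain flip $\tau(v\ot w)=w\ot v$ satisfies $\tau(x(v\ot w))=xw\ot v$ while $x\tau(v\ot w)=\om_i(xw\ot v)$, so $\tau$ is \emph{not} an $H$-map unless $\om_i=1$. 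The correct fix is simply to prove $V_t(\e)\ot V_i\cong V_t(i)$ directly by the same method you use in part (3): from $x(w\ot v)=\om_i(xw\ot v)$ you get that $V_t(\e)\ot V_i$ is generated by $V_\e\ot V_i\cong V_i$ with $x^t$ acting as zero, and then the map $v_j\ot v\mapsto\om_i^{-j}x^jv$ (the paper's map, specialized) is the desired isomorphism.

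Second, in parts (2) and (3) you write ``$x^s$ acts by $\b$'' and ``$x^s$ acts as $\om_i^s\b$''. This is only literally true for $t=1$; for $t>1$ the operator $x^s$ is not scalar on $V_t(\e,\b)$. What you need (and what your argument actually gives) is that $(x^s-\b)^t$, respectively $(x^s-\om_i^s\b)^t$, acts as zero on the tensor product: from $x^s(w\ot v)=\om_i^s(x^sw\ot v)$ one gets $(x^s-\om_i^s\b)^t(w\ot v)=\om_i^{st}((x^s-\b)^tw\ot v)=0$. With that correction your generation-plus-dimension argument goes through and coincides with the paper's.
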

\begin{proof}
The proofs of the three isomorphisms are similar. We only prove (3).
From \seref{3}, one knows that $V_t(\e,\b)=\oplus_{j=0}^{ts-1}x^jV_{\e}$
and $V_t(i,\om_i^s\b)=\oplus_{j=0}^{ts-1}x^jV_i$ as $kG$-modules.
Let $0\neq v_0\in V_{\e}$ and $v_j=x^jv_0$, $1\<j\<ts-1$. Then $\{v_0, v_1, \cdots, v_{ts-1}\}$ is a $k$-basis of $V_t(\e,\b)$. Hence one can define a $k$-linear isomorphism
$\phi: V_t(\e,\b)\otimes V_i\ra V_t(i,\om^s_i\b)$ by $\phi(v_j\ot v)=\om_i^{-j}x^jv$ for all $0\<j\<ts-1$ and $v\in V_i$. It is easy to check that $\phi(g(v_j\ot v))=g\phi(v_j\ot v)$ for all $g\in G$, $0\<j\<ts-1$ and $v\in V_i$. Now let $0\<j<ts-1$ and $v\in V_i$. Then
$\phi(x(v_j\ot v))=\phi(xv_j\ot av)=\om_i\phi(v_{j+1}\ot v)=\om_i\om_i^{-(j+1)}x^{j+1}v=x(\om_i^{-j}x^jv)=x\phi(v_j\ot v)$.
Let $(y-\b)^t=y^t-\sum_{l=0}^{t-1}\a_ly^l$. Then $(y-\om_i^s\b)^t=y^t-\sum_{l=0}^{t-1}\om_i^{s(t-l)}\a_ly^l$. Hence we have
$\phi(x(v_{ts-1}\ot v))=\phi(xv_{ts-1}\ot av)=\om_i\phi(x^{ts}v_0\ot v)=\om_i\phi(\sum_{l=0}^{t-1}\a_lx^{ls}v_0\ot v)=\om_i\sum_{l=0}^{t-1}\a_l\phi(v_{ls}\ot v)=\sum_{l=0}^{t-1}\a_l\om_i^{1-ls}x^{ls}v$ and $x\phi(v_{ts-1}\ot v)=x(\om_i^{1-ts}x^{ts-1}v)=\om_i^{1-ts}x^{ts}v=\om_i^{1-ts}\sum_{l=0}^{t-1}\om_i^{s(t-l)}\a_lx^{ls}v
=\sum_{l=0}^{t-1}\om_i^{1-sl}\a_lx^{ls}v$. This shows that $\phi(x(v_{ts-1}\ot v))=x\phi(v_{ts-1}\ot v)$, and so $\phi$ is an $H$-module isomorphism.
\end{proof}

\begin{proposition}\label{5.3}
Let $p,t\in \mathbb{N}^+$, $i,j\in I$ and $\b \in k^{\ti}$. Let $p=us+r$ with $u\>0$ and $0\<r<s$. Then
$$\begin{array}{rcl}
V_p(i)\ot V_t(j,\b)
&\cong&(\oplus_{l\in I}\oplus_{1\<m\<\rm min(t,u)}N_{i,j}^l(s-r)V_{2m-1+|t-u|}(l,\b))\\
&&\oplus(\oplus_{l\in I}\oplus_{m=1}^{\rm{min}(t,u+1)}N_{i,j}^lrV_{2m-1+|t-u-1|}(l,\b)),\\
V_t(j,\b)\ot V_p(i)
&\cong&(\oplus_{l\in I}\oplus_{1\<m\<\rm{min}(u,t)}N_{j,i}^l(s-r)V_{2m-1+|t-u|}(l,\om^{s}_i\b))\\
&&\oplus(\oplus_l\oplus_{m=1}^{\rm{min}(u+1,t)}N_{j,i}^lrV_{2m-1+|t-u-1|}(l,\om_i^{s}\b)).\\
\end{array}$$
\end{proposition}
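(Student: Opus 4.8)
The plan is to reduce the general tensor product $V_p(i)\ot V_t(j,\b)$ to the already–understood case where one factor comes from $kG$, using Lemma~\ref{5.2}. First I would treat the module $V_p(i)$: since $|\chi|=s$ and $p = us+r$ with $0\<r<s$, I want to compare $V_p(i)$ with tensor products $V_i\ot V_p(\e)$. By Lemma~\ref{5.2}(1), $V_i\ot V_p(\e)\cong V_p(i)$, so it suffices to analyze $V_p(\e)\ot V_t(\e,\b)$ first, and then tensor on the left by $V_i$ and on the right by $V_j$ (replacing $V_i\ot V_j$ by $\oplus_l N_{i,j}^l V_l$, which is legitimate because $kG$ is semisimple and $\ot$ distributes over direct sums). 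Thus the heart of the matter is the ``scalar'' case $i=j=\e$: decompose $V_p(\e)\ot V_t(\e,\b)$ into a direct sum of modules of the form $V_m(\e,\b)$.

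For that scalar computation I would work explicitly with the bases coming from Section~3: $V_p(\e)=\oplus_{a=0}^{p-1}x^aV_\e$ and $V_t(\e,\b)=\oplus_{b=0}^{ts-1}x^bV_\e$, with the action of $x$ and of $g\in G$ known on each. On the tensor product one has $\Delta(x)=x\ot a+1\ot x$ and $\Delta(g)=g\ot g$; since $G$ acts on every $x^aV_\e$ through $\e$ up to the central scalar coming from $a$, the group $G$ acts trivially on the whole tensor product except for the central element $a$, so the decomposition is governed entirely by the $k[x^s]$-module structure, i.e. by a single nilpotent-plus-scalar operator. Concretely, $(x^s-\b)$ is central, $(x^s)^{something}$ is nilpotent on the $V_p(\e)$ factor and acts as $\b + \text{nilpotent}$ on the $V_t(\e,\b)$ factor; writing $x\ot a$ and $1\ot x$ and expanding $\Delta(x^s)$ by the $q$-binomial formula (with $q=\chi(a)$, $|q|=s$, so the mixed $q$-binomial coefficients vanish and $\Delta(x^s)=x^s\ot a^s+1\ot x^s$, a crucial simplification from $|q|=s$) reduces the problem to: decompose a tensor product of a Jordan block (nilpotent, size related to $u$ and $r$) with a module on which $x^s$ is $\b+$nilpotent (size $t$), over a polynomial ring in one variable. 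This is the classical Clebsch--Gordan/Jordan-form computation, and it produces exactly the pieces $V_{2m-1+|t-u|}(\e,\b)$ (from the ``$s-r$'' part) and $V_{2m-1+|t-u-1|}(\e,\b)$ (from the ``$r$'' part), with the stated multiplicity ranges $1\<m\<\min(t,u)$ and $1\<m\<\min(t,u+1)$. I would verify the decomposition by matching dimensions ($ps\cdot ts = \dim V_p(i)\ot V_t(j,\b)$ after multiplying through by $\dim V_i\dim V_j$ and $N_{i,j}^l$) and by identifying $\rm rad$-layers via Proposition~\ref{4.10}, or alternatively by exhibiting explicit generating vectors for each summand and checking they are $H$-linearly independent and $x$-cyclic of the right length.

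Once the scalar case is settled, the first isomorphism follows by tensoring on the left with $V_i$ and on the right with $V_j$: $V_p(i)\ot V_t(j,\b)\cong V_i\ot(V_p(\e)\ot V_t(\e,\b))\ot V_j$, then distribute the decomposition of $V_p(\e)\ot V_t(\e,\b)$, and apply Lemma~\ref{5.2}(2) to turn each $V_i\ot V_m(\e,\b)$ into $V_m(i,\b)$ and then absorb $V_j$ using $V_m(i,\b)\ot V_j$; since $kG$ is semisimple, $V_l\ot V_j$-type bookkeeping just contributes the coefficients $N_{i,j}^l$. For the second isomorphism $V_t(j,\b)\ot V_p(i)$ I would run the mirror-image argument, writing $V_p(i)\cong V_p(\e)\ot V_i$ (the right-handed version in Lemma~\ref{5.2}(1)) and using Lemma~\ref{5.2}(3), $V_m(\e,\b)\ot V_i\cong V_m(i,\om_i^s\b)$, which is precisely the source of the parameter shift $\b\mapsto\om_i^s\b$ in the statement; the symmetry $N_{j,i}^l=N_{i,j}^l$ from Convention~\ref{5.1} takes care of the rest.

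The main obstacle I anticipate is the scalar Clebsch--Gordan computation itself: getting the index ranges and the ``$2m-1+|t-u|$'' lengths exactly right, in particular correctly splitting the contribution according to the remainder $r$ (the ``$s-r$ copies of one family plus $r$ copies of the shifted family'' dichotomy). This is really the statement that tensoring a size-$p$ nilpotent module, with $p\equiv r\pmod s$, against a uniserial module of $x^s$-length $t$, behaves like $V_p(\e)\cong$ (a $u$-step uniserial glued with an $(r)$-part) — one must carefully track how the nilpotent operator $x^s$ interacts across the tensor product and how many new uniserial summands of each length appear. Everything else (reduction to the scalar case, the parameter shift, the multiplicities $N_{i,j}^l$) is a formal consequence of Lemma~\ref{5.2}, the semisimplicity of $kG$, and distributivity of $\ot$.
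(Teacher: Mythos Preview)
Your reduction strategy is exactly the paper's: use Lemma~\ref{5.2} to peel off the simple $kG$-modules $V_i$ and $V_j$, reduce to the weight-module (``scalar'') case $V_p(\e)\ot V_t(\e,\b)$, decompose that, and then reassemble using the coefficients $N_{i,j}^l$. The paper does not redo the scalar computation; it simply cites \cite[Theorem~3.6]{H.H} for the decomposition of $V_p(\e)\ot V_t(\e,\b)$, so your Clebsch--Gordan/Jordan-block sketch is reproving that earlier result rather than contributing anything new to the present argument. Your outline of that computation (use $\Delta(x^s)=x^s\ot a^s+1\ot x^s$ from $|q|=s$, reduce to a one-variable Jordan problem) is the right idea.

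There is, however, a genuine slip in your bookkeeping that would give the wrong parameter. You write $V_p(i)\ot V_t(j,\b)\cong V_i\ot(V_p(\e)\ot V_t(\e,\b))\ot V_j$, i.e.\ you place $V_j$ on the \emph{right}. But from Lemma~\ref{5.2} one has $V_t(j,\b)\cong V_j\ot V_t(\e,\b)$, so
\[
V_p(i)\ot V_t(j,\b)\cong V_i\ot V_p(\e)\ot V_j\ot V_t(\e,\b),
\]
and swapping $V_j$ past $V_t(\e,\b)$ is \emph{not} harmless: by Lemma~\ref{5.2}(2)--(3), $V_j\ot V_t(\e,\b)\cong V_t(j,\b)$ while $V_t(\e,\b)\ot V_j\cong V_t(j,\om_j^s\b)$, and these differ unless $\om_j^s=1$. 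Your arrangement would therefore produce summands with parameter $\om_j^s\b$ rather than $\b$ in the first isomorphism. The fix is to commute $V_p(\e)$ past $V_j$ instead --- this \emph{is} symmetric by Lemma~\ref{5.2}(1) --- obtaining $V_i\ot V_j\ot V_p(\e)\ot V_t(\e,\b)$, then replace $V_i\ot V_j$ by $\oplus_l N_{i,j}^l V_l$ and apply Lemma~\ref{5.2}(2) to each $V_l\ot V_m(\e,\b)$. This is precisely what the paper does, and it is also why the asymmetry $\b$ versus $\om_i^s\b$ between the two isomorphisms arises: in the second one it is $V_i$ that must sit to the right of $V_t(\e,\b)$, invoking Lemma~\ref{5.2}(3).
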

\begin{proof}
By Convention \ref{5.1}, Lemma \ref{5.2} and \cite[Theorem 3.6 ]{H.H}, we have
$$\begin{array}{rl}
V_p(i)\ot V_t(j,\b)\cong&V_i\ot V_p(\e)\ot V_j\ot V_t(\e, \b)\\
\cong&V_i\ot V_j\ot V_p(\e)\ot V_t(\e, \b)\\
\cong&(\oplus_{l\in I}N_{i,j}^lV_l)\ot((\oplus_{1\<m\<\rm min(t,u)}(s-r)V_{2m-1+|t-u|}(\e,\b))\\
&\oplus(\oplus_{m=1}^{\rm{min}(t,u+1)}rV_{2m-1+|t-u-1|}(\e,\b)))\\
\cong&(\oplus_{l\in I}\oplus_{1\<m\<\rm min(t,u)}N_{i,j}^l(s-r)V_{2m-1+|t-u|}(l,\b))\\
&\oplus(\oplus_{l\in I}\oplus_{m=1}^{\rm{min}(t,u+1)}N_{i,j}^lrV_{2m-1+|t-u-1|}(l,\b)).\\
\end{array}$$
Similarly, one can show the second isomorphism.
\end{proof}

\begin{proposition}\label{5.4}
Let $p,t\in \mathbb{N}^+$, $i,j\in I$ and $\b \in k^{\ti}$. Then
$$V_p(i,\alpha)\ot V_t(j,\b)\cong\oplus_{l\in I}\oplus_{m=0}^{s-1}
\oplus_{u=1}^{{\rm min}\{p,t\}}N_{i,j}^l V_{2u-1+|p-t|}(\sigma^m(l),\om^s_j\a+\beta).$$
Moreover, $V_{2u-1+|p-t|}(\sigma^m(l),\om^s_j\a+\beta)\cong V_{s(2u-1+|p-t|)}(\sigma^m(l))$ when $\om^s_j\a+\beta=0$ and $V_{2u-1+|p-t|}(\sigma^m(l),\om^s_j\a+\beta)\cong V_{2u-1+|p-t|}(l,\om^s_j\a+\beta)$ when $\om^s_j\a+\beta\neq 0$.
\end{proposition}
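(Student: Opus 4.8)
The plan is to reduce the general statement to the weight-module case already established in \cite{H.H}, exactly as in the proof of Proposition~\ref{5.3}, by peeling off the simple $kG$-factors with Lemma~\ref{5.2}. First I would use Convention~\ref{5.1} together with parts (1)--(3) of Lemma~\ref{5.2} to write
$$V_p(i,\a)\ot V_t(j,\b)\cong V_i\ot V_p(\e,\a)\ot V_j\ot V_t(\e,\b)\cong (V_i\ot V_j)\ot\big(V_p(\e,\a)\ot V_t(\e,\b)\big),$$
where in the last step one commutes $V_p(\e,\a)$ past $V_j$; note that by Lemma~\ref{5.2}(3) this introduces the twist $\a\mapsto\om_j^s\a$, so the middle factor becomes $V_p(\e,\om_j^s\a)\ot V_t(\e,\b)$. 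Then I would invoke the weight-module decomposition rule from \cite{H.H} for the tensor product $V_p(\e,\om_j^s\a)\ot V_t(\e,\b)$ of two $x$-torsionfree indecomposable weight modules over $H$, which yields a direct sum of modules $V_{2u-1+|p-t|}(\e,\om_j^s\a+\b)$ with $u$ running from $1$ to $\min\{p,t\}$. Combining with $V_i\ot V_j\cong\oplus_{l\in I}N_{i,j}^l V_l$ and Lemma~\ref{5.2}(1)--(2) to reattach the $V_l$, I obtain $\oplus_{l}\oplus_{u}N_{i,j}^l V_{2u-1+|p-t|}(l,\om_j^s\a+\b)$.

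The one genuinely new point is the appearance of the extra index $m$ ranging over $0,\dots,s-1$ and the replacement of $l$ by $\sigma^m(l)$. This is a purely notational phenomenon: in the \emph{weight} setting $l$ ranges over $\hat G$ and $[l]$ over $\hat G/\langle\chi\rangle$, whereas here $l$ ranges over all of $I$ and $[l]$ over $I_0$. Since $V_{2u-1+|p-t|}(\sigma^m(l),\gamma)$ depends on $l$ only through its class $[l]$ (Proposition~\ref{4.12} when $\gamma\neq0$, Proposition~\ref{4.3} and Corollary~\ref{4.4} together with Remark~\ref{4.1} when $\gamma=0$), the family $\{V_{2u-1+|p-t|}(\sigma^m(l),\gamma)\mid 0\le m\le s-1\}$ consists of $s$ copies of the \emph{same} isomorphism class; equivalently, summing over a full class $[l]\in I_0$ with multiplicity one is the same as summing over all $l\in I$ with each class counted $s$ times. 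So I would either state the formula with the $\sigma^m$-sum (as written) and remark that it equals $s\sum_{[l]\in I_0}N_{i,j}^l V_{\dots}([l],\gamma)$, or conversely derive the $\sigma^m$-form from the clean $[l]$-indexed form; the content is the bookkeeping identity $N_{i,j}^{\sigma^m(l)}=N_{i,j}^l$ combined with $V_\chi\ot(V_i\ot V_j)\cong (V_\chi\ot V_i)\ot V_j$, i.e. that tensoring $V_i\ot V_j$ by $V_{\chi^m}$ permutes composition factors by $\sigma^m$.

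For the ``Moreover'' clause I would simply feed the two possibilities for $\gamma=\om_j^s\a+\b$ into the established facts about the second parameter: when $\gamma\neq0$ the module $V_n(\sigma^m(l),\gamma)$ is $x$-torsionfree and isomorphic to $V_n(l,\gamma)$ by Proposition~\ref{4.12} (since $[\sigma^m(l)]=[l]$), absorbing the index $m$; when $\gamma=0$ one uses Remark~\ref{4.1}, $V_n(l,0)=V_{ns}(l)$, to rewrite $V_{2u-1+|p-t|}(\sigma^m(l),0)$ as $V_{s(2u-1+|p-t|)}(\sigma^m(l))$. I do not expect a serious obstacle here; the delicate part of the whole argument is making sure the twist $\om_j^s\a$ (rather than $\a$ itself) enters when commuting $V_p(\e,\a)$ past $V_j$, and checking that the characteristic-zero, $|G|<\infty$, $|q|=|\chi|=s$ hypotheses are exactly what is needed to apply \cite[Theorem 3.6]{H.H} and the semisimplicity of $kG$. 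I would write out the $\om_j^s$ bookkeeping carefully and leave the rest to the cited results.
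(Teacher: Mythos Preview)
Your overall strategy---reduce to the weight case via Lemma~\ref{5.2} and then cite \cite{H.H}---is exactly the paper's, but your execution of the weight-module step is wrong, and your attempt to repair it is a genuine error rather than bookkeeping.

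The relevant result in \cite{H.H} is Theorem~3.7 (not 3.6), and it already produces the sum over $m$: the decomposition of $V_p(\e,\a')\ot V_t(\e,\b)$ is
\[
\bigoplus_{m=0}^{s-1}\bigoplus_{u=1}^{\min\{p,t\}} V_{2u-1+|p-t|}(\chi^m,\a'+\b),
\]
not merely $\bigoplus_{u} V_{2u-1+|p-t|}(\e,\a'+\b)$. A dimension count confirms this: $\dim V_p(\e,\a')=ps$ and $\dim V_t(\e,\b)=ts$, so the left side has dimension $pts^2$, whereas your claimed right side has dimension $s\sum_{u=1}^{\min\{p,t\}}(2u-1+|p-t|)=spt$, too small by a factor of $s$. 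Tensoring the correct formula with $V_l$ and using $V_l\ot V_{\chi^m}\cong V_{\sigma^m(l)}$ together with Lemma~\ref{5.2}(2) is what turns $\chi^m$ into $\sigma^m(l)$; the extra index $m$ is not an artifact.

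Your proposed fix---declaring that the $s$ modules $V_{2u-1+|p-t|}(\sigma^m(l),\gamma)$, $0\le m\le s-1$, are all isomorphic---fails precisely when $\gamma=\om_j^s\a+\b=0$: then $V_n(\sigma^m(l),0)=V_{ns}(\sigma^m(l))$ by Remark~\ref{4.1}, and by Corollary~\ref{4.4} and Lemma~\ref{3.2} these are pairwise \emph{non}-isomorphic as $m$ varies. (The identity $N_{i,j}^{\sigma^m(l)}=N_{i,j}^l$ you invoke is also false in general; what holds is $N_{i,j}^{\sigma^m(l)}=N_{\sigma^m(i),j}^{l}$.) So the ``Moreover'' clause is not absorbing a redundancy; in the $\gamma\neq0$ case it collapses the $m$-sum to an $s$-fold multiplicity via Proposition~\ref{4.12}, while in the $\gamma=0$ case the $s$ summands remain distinct and Remark~\ref{4.1} rewrites each one. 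Once you quote \cite[Theorem~3.7]{H.H} correctly, the rest of your outline goes through unchanged.
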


\begin{proof}
The first assertion follows from Convention \ref{5.1}, Lemma \ref{5.2}, \cite[Theorem 3.7 ]{H.H}
and an argument similar to the proof of Proposition \ref{5.3}. The second assertion follows from Remark \ref{4.1} and Proposition \ref{4.12}.
\end{proof}

\begin{proposition}\label{5.5}
 Let  $i,j\in I$, $n,t\in \mathbb{Z}$ with $n\>t\>1$.
 Assume that $n=r's+p'$ and $t=rs+p$ with $0\<p', p\<s-1$.\\
{\rm (1)} Suppose that $p+p'\<s$. If $p\<p'$ then
$$\begin{array}{rl}
&V_n(i)\ot V_t(j)\cong V_t(j)\ot V_n(i)\\
\cong&(\oplus_{l\in I}\oplus_{m=0}^{r}\oplus_{0\<u\<p-1}N_{i,j}^lV_{n+t-1-2ms-2u}(\sigma^{u}(l)))\\
&\oplus(\oplus_{l\in I}\oplus_{0\<m\<r-1}\oplus_{p\<u\<p'-1}N_{i,j}^lV_{(r+r'-2m)s}(\sigma^{u}(l)))\\
&\oplus(\oplus_{l\in I}\oplus_{0\<m\<r-1}\oplus_{p'\<u\<p+p'-1}N_{i,j}^lV_{n+t-1-2ms-2u}(\sigma^{u}(l)))\\
&\oplus(\oplus_{l\in I}\oplus_{0\<m\<r-1}\oplus_{p+p'\<u\<s-1}N_{i,j}^lV_{(r+r'-1-2m)s}(\sigma^{u}(l))),\\
\end{array}$$
and if $p\>p'$ then
$$\begin{array}{rl}
&V_n(i)\ot V_t(j)\cong V_t(j)\ot V_n(i)\\
\cong&(\oplus_{l\in I}\oplus_{m=0}^{r}\oplus_{0\<u\<p'-1}N_{i,j}^lV_{n+t-1-2ms-2u}(\sigma^{u}(l)))\\
&\oplus(\oplus_{l\in I}\oplus_{m=0}^{r}\oplus_{p'\<u\<p-1}N_{i,j}^lV_{(r+r'-2m)s}(\sigma^{u}(l)))\\
&\oplus(\oplus_{l\in I}\oplus_{0\<m\<r-1}\oplus_{p\<u\<p+p'-1}N_{i,j}^lV_{n+t-1-2ms-2u}(\sigma^{u}(l)))\\
&\oplus(\oplus_{l\in I}\oplus_{0\<m\<r-1}\oplus_{p+p'\<u\<s-1}N_{i,j}^lV_{(r+r'-1-2m)s}(\sigma^{u}(l))).\\
\end{array}$$
{\rm (2)} Suppose that $p+p'\>s+1$ and let $\ol{m}=p+p'-s-1$. If $p\<p'$ then
$$\begin{array}{rl}
&V_n(i)\ot V_t(j)\cong V_t(j)\ot V_n(i)\\
\cong&(\oplus_{l\in I}\oplus_{m=0}^{r}\oplus_{u=0}^{\ol m}N_{i,j}^lV_{(r+r'+1-2m)s}(\sigma^{u}(l)))\\
&\oplus(\oplus_{l\in I}\oplus_{m=0}^{r}\oplus_{u=\ol{m}+1}^{p-1}N_{i,j}^lV_{n+t-1-2ms-2u}(\sigma^{u}(l)))\\
&\oplus(\oplus_{l\in I}\oplus_{0\<m\<r-1}\oplus_{p\<u\<p'-1}N_{i,j}^lV_{(r+r'-2m)s}(\sigma^{u}(l)))\\
&\oplus(\oplus_{l\in I}\oplus_{0\<m\<r-1}\oplus_{u=p'}^{s-1}N_{i,j}^lV_{n+t-1-2ms-2u}(\sigma^{u}(l))),\\
\end{array}$$
and if $p\>p'$ then
$$\begin{array}{rl}
&V_n(i)\ot V_t(j)\cong V_t(j)\ot V_n(i)\\
\cong&(\oplus_{l\in I}\oplus_{m=0}^{r}\oplus_{u=0}^{\ol m}N_{i,j}^lV_{(r+r'+1-2m)s}(\sigma^{u}(l)))\\
&\oplus(\oplus_{l\in I}\oplus_{m=0}^{r}\oplus_{u={\ol m}+1}^{p'-1}N_{i,j}^lV_{n+t-1-2ms-2u}(\sigma^{u}(l)))\\
&\oplus(\oplus_{l\in I}\oplus_{m=0}^{r}\oplus_{p'\<u\<p-1}N_{i,j}^lV_{(r+r'-2m)s}(\sigma^{u}(l)))\\
&\oplus(\oplus_{l\in I}\oplus_{0\<m\<r-1}\oplus_{u=l}^{s-1}N_{i,j}^lV_{n+t-1-2ms-2u}(\sigma^{u}(l))).\\
\end{array}$$
\end{proposition}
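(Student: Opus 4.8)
The plan is to follow the template of Propositions \ref{5.3} and \ref{5.4}: reduce to the trivial-weight factor $V_n(\e)\ot V_t(\e)$, whose decomposition is already known from \cite{H.H}, and then twist the summands by the characters carried by $V_i\ot V_j$. First, applying Lemma \ref{5.2}(1) repeatedly gives, in mod$H$,
$$V_n(i)\ot V_t(j)\;\cong\;V_i\ot V_n(\e)\ot V_t(\e)\ot V_j\;\cong\;V_i\ot V_j\ot V_n(\e)\ot V_t(\e),$$
and, since $kG$ is semisimple, Convention \ref{5.1} rewrites this as
$$V_n(i)\ot V_t(j)\;\cong\;\Big(\oplus_{l\in I}N_{i,j}^lV_l\Big)\ot\big(V_n(\e)\ot V_t(\e)\big).$$

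Next I would analyse the factor $V_n(\e)\ot V_t(\e)$. It is again an $x$-torsion module, since $x=x\ot a+1\ot x$ acts on it as a sum of two nilpotent, $q^{-1}$-commuting operators and is therefore nilpotent; hence all of its indecomposable summands have the form $V_m(\chi^u)$, and the explicit list of them --- together with the four-way case split according to $p+p'\<s$ or $p+p'\>s+1$ and $p\<p'$ or $p\>p'$ --- is precisely the decomposition rule for $V_n(\e)\ot V_t(\e)$ proved in \cite{H.H}. For one such summand, Lemma \ref{5.2}(1) combined with the identity $V_l\ot V_{\chi^u}\cong V_{\chi^u}\ot V_l\cong V_{\s^u(l)}$ from \seref{3} gives
$$V_l\ot V_m(\chi^u)\;\cong\;V_l\ot V_{\chi^u}\ot V_m(\e)\;\cong\;V_{\s^u(l)}\ot V_m(\e)\;\cong\;V_m(\s^u(l)).$$
Thus each occurrence of $V_m(\chi^u)$ in the decomposition of $V_n(\e)\ot V_t(\e)$ is to be replaced by $\oplus_{l\in I}N_{i,j}^lV_m(\s^u(l))$; in particular the exponent $u$ of $\s$ in the statement is exactly the weight-exponent of the corresponding summand of $V_n(\e)\ot V_t(\e)$, so the ranges for $u$ and the threshold $\ol m=p+p'-s-1$ carry over verbatim.

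Substituting the $\e$-weight decomposition of \cite{H.H} into the two displays above and collecting terms then yields the four asserted formulas, and $V_n(i)\ot V_t(j)\cong V_t(j)\ot V_n(i)$ follows by performing the same computation in the reverse order, using $N_{j,i}^l=N_{i,j}^l$ and the commutativity $V_t(\e)\ot V_n(\e)\cong V_n(\e)\ot V_t(\e)$ of \cite{H.H} (this is also visible from the symmetry of the final formulas under interchanging $(n,i)$ with $(t,j)$). The main obstacle is the last bookkeeping step: checking that none of the summation ranges in the cited $\e$-weight decomposition shifts under the substitution $V_m(\chi^u)\mapsto\oplus_{l}N_{i,j}^lV_m(\s^u(l))$. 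This is routine but lengthy, exactly as in the proof of Proposition \ref{5.3}.
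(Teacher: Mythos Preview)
Your proposal is correct and follows essentially the same approach as the paper's proof, which simply cites Convention \ref{5.1}, Lemma \ref{5.2}, \cite[Theorem 3.15]{H.H}, and ``an argument similar to the proof of Proposition \ref{5.3}.'' The only cosmetic difference is that the paper commutes $V_j$ past a single factor (writing $V_i\ot V_n(\e)\ot V_j\ot V_t(\e)\cong V_i\ot V_j\ot V_n(\e)\ot V_t(\e)$ as in the proof of Proposition \ref{5.3}) rather than past both factors as you do, but both rearrangements are immediate from Lemma \ref{5.2}(1).
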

\begin{proof}
It follows from Convention \ref{5.1}, Lemma \ref{5.2}, \cite[Theorem 3.15 ]{H.H}
and an argument similar to the proof of Proposition \ref{5.3}.
\end{proof}

\begin{remark}\label{5.6}
Let $r_w(H)$ denote the Green ring of {\rm wmod}$H$.
Since {\rm mod}$kG$ and {\rm wmod}$H$ are both tensor subcategories of {\rm mod}$H$,
$r(kG)$ and $r_w(H)$ are subrings of $r(H)$. The structure of $r_w(H)$ has been described in \cite{H.C}. By Lemma \ref{5.2}, $r(H)=r(kG)r_w(H)=r_w(H)r(kG)$. The injective map $\hat{G}\ra r(H)$, $\l\mapsto [V_1(\l)]=[V_{\l}]$ induces a ring embedding ${\mathbb Z}\hat{G}\hookrightarrow r(H)$ \cite{H.C}. In this case,
$r(kG)\cap r_w(H)={\mathbb Z}\hat{G}$.
Similarly, we have ${\mathbb Z}\hat{G}\subseteq G_0(kG)\subseteq G_0(H)$.
Moreover, $G_0(kG)=r(kG)$ since $kG$ is semisimple.
\end{remark}

\section{\bf An example}\selabel{5.3}

In this section, we apply the results of the previous sections to investigate the representations of the Hopf-Ore extensions of the group algebras of dihedral groups.

For any positive integer $n\>2$, the dihedral group $D_n$ of order $2n$ is defined by
$$D_n=\langle a,b|a^n=b^2=(ba)^2=1\rangle.$$

Throughout this section, assume that $n=2m$ is even and $m$ is odd with $m>1$. We also assume char$(k)=0$.
Let $\om\in k$ be a root of unity with the order $|\om|=n$.

In this case, $kD_n$ is semisimple and $a^m\in Z(D_n)$.

Let $\l, \chi\in\hat{D_n}$ be given by $\l(a)=1$, $\l(b)=-1$, $\chi(a)=-1$ and $\chi(b)=1$. Then $\hat{D_n}=\{\e, \l, \chi, \l\chi\}$ and $\hat{D_n}$ is isomorphic to the Klein group $K_4$. Therefore, $kD_n$ has 4 non-isomorphic one-dimensional simple modules $\{V_{\e}, V_{\l}, V_{\chi}, V_{\l\chi}\}$, where $V_{\e}$ is the trivial $kD_n$-module. There are $m-1$ non-isomorphic two-dimensional simple $kD_n$-modules $V_l$, $1\<l\<m-1$, their corresponding matrix representations $\rho_l: kD_n\ra M_2(k)$ are given by
$$\rho_l(a)=\left(\begin{array}{cc}
\om^l&0\\
0&\om^{-l}\\
\end{array}
\right) \text{ and } \rho_l(b)=\left(\begin{array}{cc}
0&1\\
1&0\\
\end{array}\right).$$
Let $I=\{\e, \l, \chi, \l\chi, 1, 2, \cdots, m-1\}$. Then $\{V_i|i\in I\}$ is a representative set of isomorphic classes of simple $kD_n$-modules.
The following lemma is well-known.

\begin{lemma}\label{6.1}
Let $1\<l, t\<m-1$ with $l\neq t$. Then the following hold:
\begin{enumerate}
\item[(1)] $V_{\l}\ot V_{\l}\cong V_{\chi}\ot V_{\chi}\cong V_{\e}$ and $V_{\l}\ot V_{\chi}\cong V_{\l\chi}$;
\item[(2)] $V_{\l}\ot V_l\cong V_l$ and $V_{\chi}\ot V_l\cong V_{m-l}$;
\item[(3)] if $l+t<m$ then $V_l\ot V_t\cong V_{|l-t|}\oplus V_{l+t}$;
\item[(4)] if $l+t=m$ then $V_l\ot V_t\cong V_{|l-t|}\oplus V_{\chi}\oplus V_{\l\chi}$;
\item[(5)] if $l+t>m$ then $V_l\ot V_t\cong V_{|l-t|}\oplus V_{n-l-t}$;
\item[(6)] if $2l<m$ then $V_l\ot V_l\cong V_{\e}\oplus V_{\l}\oplus V_{2l}$;
\item[(7)] if $2l>m$ then $V_l\ot V_l\cong V_{\e}\oplus V_{\l}\oplus V_{n-2l}$.
\end{enumerate}

\end{lemma}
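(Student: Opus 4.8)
The plan is to reduce everything to the action of the generators $a$ and $b$ on explicit bases, using the matrix representations $\rho_l$ fixed above and the one-dimensional representations $\e,\l,\chi,\l\chi$. Throughout I write $\om$ for the fixed primitive $n$-th root of unity, so $\om^m=-1$ since $n=2m$. For the one-dimensional modules, parts (1) and the $V_\l\ot V_l$, $V_\chi\ot V_l$ halves of (2) are immediate from multiplicativity of characters and the observation that tensoring the standard basis $\{e_1,e_2\}$ of $V_l$ by a character $\mu$ rescales the $a$-eigenvalues by $\mu(a)$ and leaves the $b$-swap intact: for $\mu=\l$ this does nothing ($\l(a)=1$), giving $V_\l\ot V_l\cong V_l$; for $\mu=\chi$ it sends the eigenvalue pair $(\om^l,\om^{-l})$ to $(-\om^l,-\om^{-l})=(\om^{l+m},\om^{-(l+m)})=(\om^{m-l},\om^{-(m-l)})$ (using $\om^{2m}=1$ and $\om^m=-1$), which is exactly the eigenvalue data of $V_{m-l}$; the $b$-action is still the swap, so $V_\chi\ot V_l\cong V_{m-l}$.

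For parts (3)--(7), take bases $\{e_1,e_2\}$ of $V_l$ and $\{f_1,f_2\}$ of $V_t$ with $a e_1=\om^l e_1$, $a e_2=\om^{-l}e_2$, $b e_1=e_2$, $b e_2=e_1$, and similarly for $f_i$ with $t$ in place of $l$. Then $V_l\ot V_t$ has basis $\{e_1\ot f_1,\ e_1\ot f_2,\ e_2\ot f_1,\ e_2\ot f_2\}$ with $a$-eigenvalues $\om^{l+t},\om^{l-t},\om^{-(l-t)},\om^{-(l+t)}$ respectively, and $b$ swaps $e_1\ot f_1\leftrightarrow e_2\ot f_2$ and $e_1\ot f_2\leftrightarrow e_2\ot f_1$. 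I would split $V_l\ot V_t$ as the direct sum of the two $b$-stable planes $P_+ = \mathrm{span}\{e_1\ot f_1,\ e_2\ot f_2\}$ and $P_- = \mathrm{span}\{e_1\ot f_2,\ e_2\ot f_1\}$. On $P_+$, the $a$-eigenvalues are $\om^{\pm(l+t)}$; if $l+t<m$ this plane is isomorphic to $V_{l+t}$, while if $l+t>m$ the eigenvalues $\om^{\pm(l+t)}$ equal $\om^{\pm(n-l-t)}$ (write $l+t=n-(n-l-t)$ and use $\om^n=1$), so $P_+\cong V_{n-l-t}$; if $l+t=m$ the eigenvalues are $\om^{\pm m}=-1$, so $a$ acts as $-1$ and $b$ as the swap, and diagonalizing the swap gives $P_+\cong V_\chi\oplus V_{\l\chi}$ (the $b$-eigenvalue $+1$ vector has $\chi(a)=-1,\chi(b)=1$; the $b$-eigenvalue $-1$ vector has $\l\chi(a)=-1,\l\chi(b)=-1$). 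On $P_-$, the $a$-eigenvalues are $\om^{\pm(l-t)}$, and since $0<|l-t|<m$ (using $l\neq t$ and $1\le l,t\le m-1$) this plane is always isomorphic to $V_{|l-t|}$; when $l=t$ instead (parts (6),(7)) the $a$-eigenvalues on $P_-$ are both $1$, so $a$ acts trivially and $b$ as the swap, yielding $P_-\cong V_\e\oplus V_\l$ after diagonalizing. Assembling $P_+$ and $P_-$ gives (3)--(7) at once, noting that in (6)--(7) the $P_+$ summand is $V_{2l}$ if $2l<m$ and $V_{n-2l}$ if $2l>m$ (the case $2l=m$ cannot occur since $m$ is odd).

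The only mild subtlety — and the place I would be most careful — is the bookkeeping of when an eigenvalue $\om^j$ with $j\ge m$ should be rewritten as $\om^{j'}$ with $1\le j'\le m-1$ so as to match the labelling of the $V_l$: this is where the hypotheses that $m$ is odd (so $2l\neq m$) and $1\le l,t\le m-1$ (so $|l-t|\neq 0$ in (3)--(5) and $l+t\le n-2<n$) are used to guarantee that every summand genuinely lands among the listed simple modules rather than degenerating. Once the index ranges are checked, each isomorphism is just the identification of a $2$-dimensional module by its $a$-eigenvalues together with the $b$-swap, which by Schur's lemma and the classification of simple $kD_n$-modules pins it down uniquely. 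Since this is standard, I would simply state that the verification is routine and omit the elementary computations; indeed the excerpt itself labels this "well-known."
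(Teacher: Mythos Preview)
Your proposal is correct and is exactly the standard direct verification one would give. The paper itself supplies no proof at all, simply labelling the lemma ``well-known'', so there is nothing to compare approaches against; your explicit computation with the $b$-stable planes $P_{\pm}$ is the natural way to fill this in.

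One small slip worth fixing: in part (2) you assert that tensoring by a character $\mu$ ``leaves the $b$-swap intact'' and that for $\mu=\l$ ``this does nothing ($\l(a)=1$)''. But $\l(b)=-1$, so in $V_{\l}\ot V_l$ the generator $b$ acts as the \emph{negative} of the swap, not the swap itself. The conclusion $V_{\l}\ot V_l\cong V_l$ is still correct --- replacing $e_2$ by $-e_2$ restores the swap without disturbing the $a$-eigenvalues --- but the sentence as written glosses over this. (For $\mu=\chi$ one has $\chi(b)=1$, so there your claim is literally true.) Similarly, the equality $(\om^{l+m},\om^{-(l+m)})=(\om^{m-l},\om^{-(m-l)})$ holds only as an unordered pair, not as an ordered one; again harmless, since only the eigenvalue set matters for identifying $V_{m-l}$. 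With these cosmetic corrections the argument is complete.
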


Since $m$ is odd, $|\chi(a^m)|=2=|\chi|$. One can form a Hopf-Ore extension $kD_n(\chi, a^m, 0)$. Note that $\chi^{-1}=\chi$.

Throughout the rest of this section, let $H=kD_n(\chi, a^m, 0)$.

Let $I_0=\{\e, \l, 1, 2, \cdots, \frac{m-1}{2}\}$. Then it follows from Lemma \ref{6.1}(1, 2) and Corollary \ref{3.6}(2) that the following set is a representative set of isomorphic classes of finite dimensional simple $H$-modules:
$$\{V_i, V(j,\b)|i\in I, j\in I_0, \b\in k^{\ti}\}.$$
By Lemma \ref{6.1}(1, 2) and Corollary \ref{4.13}, the following set is a representative set of isomorphic classes of finite dimsensional indecomposable $H$-modules:
$$\{V_t(i), V_t(j,\b)|t\in\mathbb{N}^+, i\in I, j\in I_0, \b\in k^{\ti}\}.$$
Moreover, $V_t(\chi,\b)\cong V_t(\e,\b)$, $V_t(\l\chi,\b)\cong V_t(\l,\b)$
and $V_t(j,\b)\cong V_t(m-j,\b)$ for any $t\>1$, $1\<j\<m-1$ and $\b\in k^{\ti}$.

In what follows, we will frequently use the above two classifications, but not mention them for simplicity.

For any $i\in I$, it follows from Convention \ref{5.1} that there is a scale $\om_i\in k^{\ti}$ such that $a^mv=\om_iv$, $v\in V_i$. It is easy to see that either $\om_i=1$ or $\om_i=-1$. Since $s=|\chi|=2$, $\om_i^s=\om_i^2=1$. Thus, by Propositions \ref{5.3}-\ref{5.5}, we have the following corollary.

\begin{corollary}\label{6.2}
For any $M, N\in{\rm mod}H$, $M\ot N\cong N\ot M$. Consequently, $G_0(H)$ and $r(H)$ are both commutative rings.
\end{corollary}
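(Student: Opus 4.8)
The plan is to reduce the commutativity of $M\ot N$ for arbitrary $M,N\in{\rm mod}H$ to the case where $M$ and $N$ are indecomposable, since both $-\ot-$ and $\oplus$ are additive and every module decomposes as a finite direct sum of indecomposables (Krull--Schmidt holds because $H$ is finite dimensional here). So it suffices to check $M\ot N\cong N\ot M$ for $M,N$ ranging over the list in Corollary~\ref{4.13}, i.e. for each of the ordered pairs $(V_p(i),V_t(j))$, $(V_p(i),V_t(j,\b))$, $(V_p(i,\a),V_t(j,\b))$, and the pair with the two slots in $V_t(\cdot,\cdot)$ form; the commutativity within a single type covers the cases where both factors are of the same form.

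First I would record the key numerical fact that makes everything collapse: since $s=|\chi|=2$ and $\om_i=\pm1$ for every $i\in I$ (as noted just before the corollary), we have $\om_i^s=\om_i^2=1$ for all $i$. Then I would run through the relevant decomposition formulas. For two $x$-torsion indecomposables $V_n(i)\ot V_t(j)$ and $V_t(j)\ot V_n(i)$, Proposition~\ref{5.5} already states both orders are isomorphic (the displayed formulas there literally read $V_n(i)\ot V_t(j)\cong V_t(j)\ot V_n(i)$), using $N_{i,j}^l=N_{j,i}^l$ from Convention~\ref{5.1}. For the mixed type, compare the two displays in Proposition~\ref{5.3}: the first gives $V_p(i)\ot V_t(j,\b)$ as a sum of $V_{\,\cdot\,}(l,\b)$'s, the second gives $V_t(j,\b)\ot V_p(i)$ as the same sum but with second parameter $\om_i^s\b$; since $\om_i^s=1$ these coincide, and again $N_{i,j}^l=N_{j,i}^l$. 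For two $x$-torsionfree indecomposables, Proposition~\ref{5.4} gives $V_p(i,\a)\ot V_t(j,\b)$ in terms of $\om_j^s\a+\b$; swapping the factors replaces this by $\om_i^s\b+\a$, and since $\om_i^s=\om_j^s=1$ both equal $\a+\b$, so the two orders agree (the summand $\oplus_m$ over $\s^m(l)$ with $N_{i,j}^l=N_{j,i}^l$ is visibly symmetric).

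Assembling these, I would conclude $M\ot N\cong N\ot M$ for all $M,N\in{\rm mod}H$ by additivity. The consequence for $r(H)$ is then immediate: $r(H)$ is generated as an abelian group by the classes $[L]$ of indecomposables, $[M][N]=[M\ot N]=[N\ot M]=[N][M]$, so $r(H)$ is commutative; and $G_0(H)$ is a quotient ring of $r(H)$ via the canonical epimorphism of \seref{2.1}, hence also commutative. The only mild obstacle is bookkeeping: one must make sure every pairing of types from the indecomposable list is genuinely covered by one of Propositions~\ref{5.3}--\ref{5.5} (in particular that $V_t(i,\b)$ with $\b\neq0$ versus $V_t(i)$ is the Proposition~\ref{5.3} case and that Remark~\ref{4.1}, $V_t(i,0)=V_{ts}(i)$, is not needed since we are listing indecomposables with $\b\in k^{\ti}$). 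No deep new input is required beyond $\om_i^2=1$ and the symmetry $N_{i,j}^l=N_{j,i}^l$.
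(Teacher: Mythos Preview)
Your proposal is correct and follows exactly the paper's approach: the paper's proof consists precisely of the observation that $\om_i^s=\om_i^2=1$ together with an appeal to Propositions~\ref{5.3}--\ref{5.5}. Your write-up simply unpacks the case analysis and the role of $N_{i,j}^l=N_{j,i}^l$ more explicitly than the paper does.
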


\begin{convention}\label{6.3}
For any $V\in{\rm mod}H$ and $l\in\mathbb N$, define $V^{\ot l}$ by $V^{\ot 0}=V_1(\e)\cong V_{\e}$ for $l=0$, $V^{\ot 1}=V$ for $l=1$, and $V^{\ot l}=V\ot V\ot\cdots\ot V$, the tensor product of $l$-folds of $V$, for $l>1$.
\end{convention}

\subsection{\bf The Grothendieck ring of $H$}
In this subsection, we will investigate the Grotendieck ring $G_0(H)$. From Remark \ref{5.6}, ${\mathbb Z}\hat{D_n}\subseteq G_0(kD_n)\subset G_0(H)$. Moreover, $\e=1$, the identity of $G_0(H)$, and ${\mathbb Z}\hat{D_n}\cong{\mathbb Z}K_4$.

\begin{lemma}\label{6.4}
Let $1\<l\<m-1$. Then the decomposition of $V_1^{\ot l}$ is given as follows:
\begin{enumerate}
\item[(1)] if $l=2r-1$ is odd then $V_1^{\ot(2r-1)}\cong\oplus_{j=1}^r\binom{2r-1}{r-j}V_{2j-1}$;
\item[(2)] if $l=2r$ is even then $V_1^{\ot2r}\cong\binom{2r-1}{r-1}(V_{\e}\oplus V_{\l})\oplus(\oplus_{j=1}^r\binom{2r}{r-j}V_{2j})$.
\end{enumerate}
\end{lemma}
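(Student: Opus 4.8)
The plan is to prove both formulas by induction on $l$, using the tensor decomposition rule for the two-dimensional simple module $V_1$ together with the Clebsch-Gordan-type rules collected in Lemma \ref{6.1}. The base cases $l=1$ (giving $V_1^{\ot 1}\cong V_1$, matching $r=1$ in (1) since $\binom{1}{0}=1$) and $l=2$ (giving $V_1^{\ot 2}\cong V_{\e}\oplus V_{\l}\oplus V_2$ by Lemma \ref{6.1}(6), matching (2) with $r=1$ since $\binom{1}{0}=1$ and $\binom{2}{0}=1$) are immediate. The inductive step passes from $l$ to $l+1$ by tensoring the known decomposition of $V_1^{\ot l}$ with one more copy of $V_1$, so the whole argument reduces to knowing how $V_1\ot V_j$ and $V_1\ot V_{\e}$, $V_1\ot V_{\l}$ decompose.

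First I would record the elementary facts needed: $V_1\ot V_{\e}\cong V_1$, and $V_1\ot V_{\l}\cong V_1$ by Lemma \ref{6.1}(2) (with $l=1$, noting $m>1$ so $1\le m-1$), so the summand $\binom{2r-1}{r-1}(V_{\e}\oplus V_{\l})$ contributes $2\binom{2r-1}{r-1}$ copies of $V_1$ after tensoring. For $2\le j\le m-1$ one has $V_1\ot V_j\cong V_{j-1}\oplus V_{j+1}$ when $j+1<m$, i.e. $j\le m-2$, by Lemma \ref{6.1}(3); the boundary case $j=m-1$ gives $V_1\ot V_{m-1}\cong V_{m-2}\oplus V_{\chi}\oplus V_{\l\chi}$ by Lemma \ref{6.1}(4). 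However, since we only ever tensor up to $l\le m-1$ copies of $V_1$ and the largest index appearing in $V_1^{\ot l}$ is $l$ itself, the index $j$ we tensor stays $\le l-1\le m-2$, so the clean rule $V_1\ot V_j\cong V_{j-1}\oplus V_{j+1}$ always applies and the boundary subtlety never intervenes. This is worth stating explicitly as it is the reason the formula stays ``stable'' (binomial, no folding) in the stated range.

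With these rules in hand, the inductive step is a bookkeeping computation. Going from odd $l=2r-1$ to even $l=2r$: tensoring $\oplus_{j=1}^{r}\binom{2r-1}{r-j}V_{2j-1}$ with $V_1$ sends each $V_{2j-1}$ (for $j\ge 2$) to $V_{2j-2}\oplus V_{2j}$ and sends $V_1$ (the $j=1$ term) to $V_{\e}\oplus V_{\l}\oplus V_2$ via Lemma \ref{6.1}(6) (valid since $2<m$). Collecting the multiplicity of $V_{2j}$ for $j\ge 1$ gives $\binom{2r-1}{r-j}+\binom{2r-1}{r-j-1}=\binom{2r}{r-j}$ by Pascal's rule, and the multiplicity of $V_{\e}$ and of $V_{\l}$ is the $j=1$ contribution $\binom{2r-1}{r-1}$, exactly matching (2). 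Going from even $l=2r$ to odd $l=2r+1$: $\binom{2r-1}{r-1}(V_{\e}\oplus V_{\l})\ot V_1$ yields $2\binom{2r-1}{r-1}$ copies of $V_1$, and $\binom{2r}{r-j}V_{2j}\ot V_1$ yields $V_{2j-1}\oplus V_{2j+1}$; collecting the multiplicity of $V_{2j+1}$ for $j\ge 1$ gives $\binom{2r}{r-j}+\binom{2r}{r-j-1}=\binom{2r+1}{r-j}$, while the multiplicity of $V_1$ is $2\binom{2r-1}{r-1}+\binom{2r}{r-1}$, which equals $\binom{2r}{r-1}+\binom{2r}{r}+\binom{2r}{r-1}$... more cleanly, $2\binom{2r-1}{r-1}=\binom{2r-1}{r-1}+\binom{2r-1}{r}=\binom{2r}{r}$ and then $\binom{2r}{r}$ is the $V_1$-coefficient for index $2\cdot 1-1$ with $r\mapsto r+1$, i.e. $\binom{2(r+1)-1}{(r+1)-1}=\binom{2r+1}{r}$, which one checks equals $\binom{2r}{r}+\binom{2r}{r-1}$ by Pascal — consistent. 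So the induction closes. The only mild obstacle is keeping the index ranges straight so that one never needs Lemma \ref{6.1}(4),(5),(7) — once it is observed that all indices stay below $m$ throughout the range $l\le m-1$, the proof is purely a Pascal's-rule verification, which I would present compactly rather than in full detail.
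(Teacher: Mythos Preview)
Your proposal is correct and follows essentially the same route as the paper: induction on $l$, base cases $l=1,2$, and the inductive step carried out by tensoring with one more $V_1$ using Lemma~\ref{6.1} together with Pascal's identity. Your extra remark that all indices stay $\le m-2$ in the inductive step (so only Lemma~\ref{6.1}(2),(3),(6) are needed and the boundary rules (4),(5),(7) never enter) is a useful clarification the paper leaves implicit; note a small slip---the largest index in $V_1^{\ot l}$ is $l$, not $l-1$, but since the step requires $l+1\le m-1$ one still has $l\le m-2$ and the conclusion stands.
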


\begin{proof}
We prove the lemma by induction on $l$. For $l=1$, it is trivial. For $l=2$, it follows from Lemma \ref{6.1}(6). Now let $2<l\<m-1$. If $l=2r-1$ is odd, then by the induction hypothesis and Lemma \ref{6.1}, we have
$$\begin{array}{rcl}
V_1^{\ot(2r-1)}&\cong&V_1\ot V_1^{\ot(2r-2)}\\
&\cong&V_1\ot(\binom{2r-3}{r-2}(V_{\e}\oplus V_{\l})\oplus(\oplus_{j=1}^{r-1}\binom{2r-2}{r-1-j}V_{2j}))\\
&\cong&\binom{2r-3}{r-2}(V_1\ot V_{\e}\oplus V_1\ot V_{\l})\oplus(\oplus_{j=1}^{r-1}\binom{2r-2}{r-1-j}V_1\ot V_{2j})\\
&\cong&2\binom{2r-3}{r-2}V_1
\oplus(\oplus_{j=1}^{r-1}\binom{2r-2}{r-1-j}(V_{2j-1}\oplus V_{2j+1}))\\
%\cong&(2\binom{2r-3}{r-2}+\binom{2r-2}{r-2})V_1
%\oplus(\oplus_{j=2}^{r-1}(\binom{2r-2}{r-1-j}+\binom{2r-2}{r-j})V_{2j-1})
%\oplus\binom{2r-2}{0} V_{2r-1}))\\
&\cong&\oplus_{j=1}^r\binom{2r-1}{r-j}V_{2j-1}.
\end{array}$$
Similarly, if $l=2r$ is even then
$V_1^{\ot 2r}\cong\binom{2r-1}{r-1}(V_{\e}\oplus V_{\l})\oplus(\oplus_{j=1}^r\binom{2r}{r-j}V_{2j})$.
This completes the proof.
\end{proof}

Let $x=[V_1]$ in $G_0(kD_n)$. Then we have the following lemma.

\begin{lemma}\label{6.5}
Let $1\<l\<m-1$. Then the following hold in $G_0(kD_n)$:
\begin{enumerate}
\item[(1)] $\l x=x$;
\item[(2)] if $l=2r-1$ is odd, then
$$\begin{array}{c}
[V_{2r-1}]=\sum_{i=0}^{r-1}(-1)^{i}\frac{2r-1}{2r-1-2i}\binom{2r-2-i}{i}x^{2r-1-2i};
\end{array}$$
\item[(3)] if $l=2r$ is even, then
$$\begin{array}{c}
[V_{2r}]=\sum_{i=0}^{r-1}(-1)^{i}\frac{2r}{2r-i}\binom{2r-i}{i}x^{2r-2i}
+(-1)^r(\l+1).
    \end{array}$$
\end{enumerate}
\end{lemma}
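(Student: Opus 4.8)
The plan is to work entirely inside the commutative ring $G_0(kD_n)$, using $\l^2=\e=1$ and Lemma \ref{6.4} as the bridge between the basis $\{[V_{2j-1}]\}$, $\{[V_{2j}]\}$ and the powers $x^l=[V_1]^{\ot l}$. For part (1), note $\l x = [V_\l][V_1] = [V_\l\ot V_1] = [V_1] = x$ by Lemma \ref{6.1}(2); this will be used repeatedly to collapse the terms $\binom{2r-1}{r-1}(V_\e\oplus V_\l)$, since in $G_0(kD_n)$ we have $[V_\e]+[V_\l] = 1+\l$ and $(1+\l)x = 2x$, $(1+\l)\l = 1+\l$.

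For parts (2) and (3), the strategy is induction on $l$, inverting the relations in Lemma \ref{6.4}. Rewriting Lemma \ref{6.4}(1) in $G_0(kD_n)$ gives $x^{2r-1} = \sum_{j=1}^r \binom{2r-1}{r-j}[V_{2j-1}]$, and Lemma \ref{6.4}(2) gives $x^{2r} = \binom{2r-1}{r-1}(1+\l) + \sum_{j=1}^r \binom{2r}{r-j}[V_{2j}]$. So I would solve for the top term: $[V_{2r-1}] = x^{2r-1} - \sum_{j=1}^{r-1}\binom{2r-1}{r-j}[V_{2j-1}]$ and substitute the inductive formula for each $[V_{2j-1}]$ with $j<r$. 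Collecting the coefficient of $x^{2r-1-2i}$ then reduces to a binomial-coefficient identity
\[
\frac{2r-1}{2r-1-2i}\binom{2r-2-i}{i} \;=\; \binom{2r-1}{r-1-i} \;-\; \sum_{a\ge 1}\binom{2r-1}{a}\,\frac{2(r-a)-1}{2(r-a)-1-2(i-a)}\binom{2(r-a)-2-(i-a)}{i-a},
\]
which one verifies by a standard generating-function / Catalan-type argument (the numbers $\frac{n}{n-k}\binom{n-1-k}{k}$ are the coefficients appearing in the expansion of Chebyshev-like polynomials, which is exactly what inverting the Pascal-convolution in Lemma \ref{6.4} produces). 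The even case is identical, with the extra bookkeeping that the $(1+\l)$ terms accumulate: using $\l x = x$ one checks the leftover constant-plus-$\l$ contributions telescope to $(-1)^r(\l+1)$.

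The main obstacle I anticipate is the binomial identity bookkeeping rather than anything conceptual: one must track precisely how the "tail" sums $\sum_{j<r}\binom{2r-1}{r-j}[V_{2j-1}]$ contribute to each power $x^{2r-1-2i}$ after substitution, and confirm the alternating-sign closed form. A cleaner route, which I would actually prefer to present, is to \emph{avoid} inverting by hand: define $p_l(x)\in\mathbb Z[\l,x]$ to be the right-hand sides of (2) and (3), and prove directly by induction that $x\cdot p_l(x) = p_{l+1}(x) + p_{l-1}(x)$ in $G_0(kD_n)$ together with the base cases $p_1=x$, $p_2 = x^2-(1+\l)$ — this mirrors the recursion $V_1\ot V_l \cong V_{l-1}\oplus V_{l+1}$ from Lemma \ref{6.1}(3) (with the endpoint corrections from (4),(6),(7) handled by the $\l$-terms and by $\l x = x$). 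Since $[V_1]$ acts on the $\mathbb Z[\l]$-module $G_0(kD_n)$ exactly by that three-term recursion with these initial conditions, uniqueness of solutions to a linear recursion forces $[V_l] = p_l(x)$. This reduces everything to one polynomial identity $x\,p_l = p_{l+1}+p_{l-1}$, checkable by comparing coefficients of $x^{2r-2i}$ via the Pascal rule $\binom{n}{k}=\binom{n-1}{k}+\binom{n-1}{k-1}$, plus verifying the $\l$-coefficient recursion $(-1)^{r} + (-1)^{r-1} = 0$ appropriately shifted — routine once set up.
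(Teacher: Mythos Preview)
Your preferred second approach—defining $p_l$ as the claimed right-hand sides and checking inductively that $p_l = x\,p_{l-1} - p_{l-2}$ (equivalently $x\,p_{l-1}=p_l+p_{l-2}$) with base cases $p_1=x$, $p_2=x^2-(1+\l)$, and invoking the recursion $[V_l]=x[V_{l-1}]-[V_{l-2}]$ from Lemma~\ref{6.1}(3),(6)—is exactly what the paper does, down to the Pascal-rule coefficient manipulation. Your first approach (inverting the formulas of Lemma~\ref{6.4} directly) is a genuinely different route and would also succeed, but as you correctly anticipate it leads to a heavier convolution identity; the paper avoids this entirely by using the three-term recursion.
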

\begin{proof}
Note that $\frac{2r-1}{2r-1-i}\binom{2r-2-i}{i}$ and $\frac{2r}{2r-i}\binom{2r-i}{i}$ are integers for all $0\<i\<r-1$.
Part (1) follows from Lemma \ref{6.1}(2). For Parts (2) and (3), we prove them by induction on $l$. If $l=1$ then it is trivial. If $l=2$ then it follows from Lemma \ref{6.1}(6). Now let $2<l\<m-1$. If $l=2r-1$ is odd, then by Lemma \ref{6.1}(3), the induction hypothesis and Part (1), we have
$$\begin{array}{rl}
[V_{2r-1}]=&x[V_{2r-2}]-[V_{2r-3}]\\
=&x(\sum_{i=0}^{r-2}(-1)^{i}\frac{2r-2}{2r-2-i}\binom{2r-2-i}{i}x^{2r-2-2i}
+(-1)^{r-1}(\l+1))\\
&-\sum_{i=0}^{r-2}(-1)^{i}\frac{2r-3}{2r-3-2i}\binom{2r-4-i}{i}x^{2r-3-2i}\\
=&\sum_{i=0}^{r-2}(-1)^{i}\frac{2r-2}{2r-2-i}\binom{2r-2-i}{i}x^{2r-1-2i}
+(-1)^{r-1}2x\\
&+\sum_{i=1}^{r-1}(-1)^i\frac{2r-3}{2r-1-2i}\binom{2r-3-i}{i-1}x^{2r-1-2i}\\
=&x^{2r-1}+\sum_{i=1}^{r-1}(-1)^{i}
(\frac{2r-2}{2r-2-i}\binom{2r-2-i}{i}+\frac{2r-3}{2r-1-2i}\binom{2r-3-i}{i-1})x^{2r-1-2i}\\
=&\sum_{i=0}^{r-1}(-1)^{i}\frac{2r-1}{2r-1-2i}\binom{2r-2-i}{i}x^{2r-1-2i}.\\
\end{array}$$
If $l=2r$ is even, then a similar argument shows that
$$\begin{array}{c}
[V_{2r}]=\sum_{i=0}^{r-1}(-1)^{i}\frac{2r}{2r-i}\binom{2r-i}{i}x^{2r-2i}
+(-1)^r(\l+1).\\
\end{array}$$
This completes the proof.
\end{proof}

\begin{corollary}\label{6.6}
The following hold:
\begin{enumerate}
\item[(1)] $G_0(kD_n)$ has a $\mathbb Z$-basis $X_1:=\{1, \l, \chi, \l\chi, x, x^2, \cdots, x^{m-1}\}$;
\item[(2)] $G_0(kD_n)$ is generated, as a ring, by its subring ${\mathbb Z}\hat{D_n}$ and the element $x$.
\end{enumerate}
\end{corollary}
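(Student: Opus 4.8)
The plan is to deduce both statements directly from the structure results already established for $G_0(kD_n)$. Recall that since $kD_n$ is semisimple, $G_0(kD_n)=r(kD_n)$ has a $\mathbb Z$-basis consisting of the classes of the non-isomorphic simple $kD_n$-modules, namely $\{1,\l,\chi,\l\chi\}$ (the four one-dimensional modules, with $1=[V_\e]$) together with $\{[V_1],[V_2],\dots,[V_{m-1}]\}$ (the $m-1$ two-dimensional modules). So it suffices to show that, after fixing $1,\l,\chi,\l\chi$, one may replace the $\mathbb Z$-basis $\{[V_1],\dots,[V_{m-1}]\}$ by the powers $\{x,x^2,\dots,x^{m-1}\}$, where $x=[V_1]$.

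\medskip
For part (1), I would argue that the $\mathbb Z$-linear span of $X_1$ equals $G_0(kD_n)$ and that $X_1$ is $\mathbb Z$-linearly independent. The key input is Lemma \ref{6.5}(2)--(3): each $[V_l]$ with $1\le l\le m-1$ is expressed as a $\mathbb Z$-linear combination of $1,\l$ and the powers $x,x^2,\dots,x^l$. Crucially, in that formula the top-degree term is $x^l$ with coefficient $+1$ (when $l=2r-1$, the $i=0$ term is $x^{2r-1}$; when $l=2r$, the $i=0$ term is $x^{2r}$). Hence the change-of-basis matrix from $\{[V_1],\dots,[V_{m-1}]\}$ to $\{x,\dots,x^{m-1}\}$ — working within the free $\mathbb Z$-module $G_0(kD_n)/(\mathbb Z 1\oplus\mathbb Z\l)$, or more carefully keeping track of the $1,\l$ contributions — is unitriangular with $1$'s on the diagonal, so it is invertible over $\mathbb Z$. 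Concretely: the powers $x,\dots,x^{m-1}$ all lie in $G_0(kD_n)$ (products of basis classes expand via Lemma \ref{6.1} into non-negative integer combinations of simples), and conversely, by downward induction on $l$ using Lemma \ref{6.5}, each $x^l$ is a $\mathbb Z$-combination of $[V_l]$ and lower $x^j$ (hence of the $[V_j]$ and $1,\l$); this shows $\{1,\l,\chi,\l\chi,x,\dots,x^{m-1}\}$ spans and, being $m+3$ elements mapping to a basis under an invertible integral transformation, is itself a basis.

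\medskip
Part (2) is then immediate: by part (1) every element of $G_0(kD_n)$ is a $\mathbb Z$-linear combination of $1,\l,\chi,\l\chi$ and powers of $x$, and all of $1,\l,\chi,\l\chi$ lie in $\mathbb Z\hat{D_n}$ while $x=[V_1]\in G_0(kD_n)$; thus $G_0(kD_n)$ is generated as a ring by the subring $\mathbb Z\hat{D_n}$ together with $x$.

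\medskip
The main obstacle is purely bookkeeping: making the unitriangularity argument for part (1) precise in the presence of the $(-1)^r(\l+1)$ correction terms in Lemma \ref{6.5}(3). The cleanest route is to pass to the quotient ring $G_0(kD_n)/\langle 1-\chi,\,\l-\l\chi\rangle$ or simply to note that $\{1,\l,\chi,\l\chi\}$ is already part of both candidate bases and work modulo their span, where the transition matrix between $\{[V_l]\}_{l=1}^{m-1}$ and $\{x^l\}_{l=1}^{m-1}$ becomes visibly lower-unitriangular. One should also double-check that the integrality assertions at the start of the proof of Lemma \ref{6.5} guarantee the coefficients are in $\mathbb Z$, so that the whole argument stays inside $G_0(kD_n)$ rather than $G_0(kD_n)\otimes\mathbb Q$.
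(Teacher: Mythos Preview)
Your proposal is correct and follows essentially the same line as the paper: use Lemma~\ref{6.5}(2,3) to see that each $[V_l]$ lies in the $\mathbb Z$-span of $X_1$, hence $X_1$ generates $G_0(kD_n)$, and then conclude it is a basis. The only difference is in the finishing step: where you carefully argue unitriangularity of the transition matrix modulo ${\mathbb Z}1\oplus{\mathbb Z}\l$, the paper simply observes that $\sharp X_1=\sharp\{[V_i]\mid i\in I\}$ and invokes the fact that any generating set of a free $\mathbb Z$-module of finite rank with cardinality equal to the rank is automatically a basis---so the bookkeeping you flag as the ``main obstacle'' is not actually needed.
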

\begin{proof}
(1) Since $\{[V_i]|i\in I\}$ is a $\mathbb Z$-basis of $G_0(kD_n)$, it follows from Lemma \ref{6.5}(2, 3) that $G_0(kD_n)$ is generated, as a $\mathbb Z$-module, by $X_1$. Since $\sharp\{[V_i]|i\in I\}=\sharp X_1$, $X_1$ is also a $\mathbb Z$-basis of $G_0(kD_n)$.

(2) It follows from (1).
\end{proof}

\begin{corollary}\label{6.7}
The following hold in $G_0(kD_n)$:
\begin{enumerate}
\item[(1)]
$x^m=\sum_{i=1}^{\frac{m-1}{2}}(-1)^{i-1}\frac{m}{m-2i}\binom{m-1-i}{i}x^{m-2i}
+(1+\l)\chi$;
\item[(2)]
$\chi x=\sum_{i=0}^{\frac{m-3}{2}}(-1)^{i}\frac{m-1}{m-1-i}\binom{m-1-i}{i}x^{m-1-2i}
+(-1)^{\frac{m-1}{2}}(1+\l)$.
\end{enumerate}
\end{corollary}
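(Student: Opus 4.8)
The plan is to read off both identities from the explicit polynomial expressions for $[V_{m-1}]$ and $[V_{m-2}]$ furnished by Lemma \ref{6.5}, combined with the two ``boundary'' fusion rules of Lemma \ref{6.1}.

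Part (2) is essentially immediate. By Lemma \ref{6.1}(2) we have $V_{\chi}\ot V_1\cong V_{m-1}$, so in $G_0(kD_n)$
$$\chi x=[V_{\chi}][V_1]=[V_{\chi}\ot V_1]=[V_{m-1}].$$
Since $m$ is odd, $m-1=2r$ with $r=\frac{m-1}{2}$; substituting this into Lemma \ref{6.5}(3) and observing that $\frac{2r}{2r-i}\binom{2r-i}{i}=\frac{m-1}{m-1-i}\binom{m-1-i}{i}$, $x^{2r-2i}=x^{m-1-2i}$, and $(-1)^r(\l+1)=(-1)^{\frac{m-1}{2}}(1+\l)$, one gets exactly the claimed expression for $\chi x$.

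For part (1) the key input is Lemma \ref{6.1}(4) with $l=1$, $t=m-1$ (so $l+t=m$ and $|l-t|=m-2$): $V_1\ot V_{m-1}\cong V_{m-2}\oplus V_{\chi}\oplus V_{\l\chi}$, which yields
$$x[V_{m-1}]=[V_{m-2}]+\chi+\l\chi=[V_{m-2}]+(1+\l)\chi$$
in $G_0(kD_n)$. I would then substitute Lemma \ref{6.5}(3) for $[V_{m-1}]$ (with $2r=m-1$) on the left and Lemma \ref{6.5}(2) for $[V_{m-2}]$ (with $2r-1=m-2$) on the right, and use $\l x=x$ (Lemma \ref{6.5}(1)) to replace the term $(-1)^{\frac{m-1}{2}}(\l+1)x$ by $2(-1)^{\frac{m-1}{2}}x$; this is precisely the value that the would-be $i=\frac{m-1}{2}$ summand of $x[V_{m-1}]$ takes, so the summation may be regarded as running through $i=\frac{m-1}{2}$. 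The $i=0$ summand of $[V_{m-1}]$ is $x^{m-1}$, so the left-hand side has leading term $x^m$ with coefficient $1$; isolating $x^m$ and equating, for $1\<j\<\frac{m-1}{2}$, the coefficients of $x^{m-2j}$ on the two sides reduces the claim to the binomial identity
$$\frac{m-2}{m-2j}\binom{m-2-j}{j-1}+\frac{m-1}{m-1-j}\binom{m-1-j}{j}=\frac{m}{m-2j}\binom{m-1-j}{j}.$$
Writing $\binom{m-1-j}{j}=\frac{m-1-j}{j}\binom{m-2-j}{j-1}$ and clearing denominators, this reduces to the elementary polynomial identity $j(N+j-1)+(N+j)(N+1-j)=N(N+1+j)$ with $N=m-1-j$, which is readily checked.

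The only genuine work is the bookkeeping in part (1): the exponent $x^{m-2i}$ produced by $x[V_{m-1}]$ and the exponent $x^{m-2-2i}$ produced by $[V_{m-2}]$ correspond to summation indices differing by one, so one must track that index shift together with the alternating signs $(-1)^i$ to see that the total coefficient of $x^{m-2i}$ becomes $(-1)^{i-1}\frac{m}{m-2i}\binom{m-1-i}{i}$; there is nothing subtle beyond this. Conceptually, for odd $l$ the element $[V_l]$ is the Dickson polynomial $D_l(x)$, and after multiplying through by $x$ (using $(\l-1)x=0$) the relation $x[V_{m-1}]=[V_{m-2}]+(1+\l)\chi$ becomes the Dickson three-term recursion $D_m(x)=xD_{m-1}(x)-D_{m-2}(x)$ read inside $G_0(kD_n)$, so that (1) says $D_m(x)=(1+\l)\chi$ there; but this reformulation only repackages the same computation.
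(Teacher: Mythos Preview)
Your proof is correct and follows essentially the same route as the paper: both parts rest on Lemma~\ref{6.1}(2,4) together with the polynomial expressions of Lemma~\ref{6.5}, and the core of part~(1) is exactly the binomial identity $\frac{m-1}{m-1-i}\binom{m-1-i}{i}+\frac{m-2}{m-2i}\binom{m-2-i}{i-1}=\frac{m}{m-2i}\binom{m-1-i}{i}$ that you isolate. Your explicit verification of this identity and the Dickson-polynomial remark are extras not in the paper, but the argument is otherwise identical.
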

\begin{proof}
(1) By Lemma \ref{6.1}(4), $x[V_{m-1}]=[V_{m-2}]+\chi+\l\chi$. Since $m$ is odd, one gets from Lemma \ref{6.5} that
$$\begin{array}{rl}
x[V_{m-1}]=&x(\sum_{i=0}^{\frac{m-3}{2}}(-1)^{i}\frac{m-1}{m-1-i}\binom{m-1-i}{i}
x^{m-1-2i}+(-1)^{\frac{m-1}{2}}(\l+1))\\
=&\sum_{i=0}^{\frac{m-3}{2}}(-1)^{i}\frac{m-1}{m-1-i}\binom{m-1-i}{i}
x^{m-2i}+(-1)^{\frac{m-1}{2}}2x\\
=&\sum_{i=0}^{\frac{m-1}{2}}(-1)^{i}\frac{m-1}{m-1-i}\binom{m-1-i}{i}
x^{m-2i}\\
\end{array}$$
and
$$\begin{array}{rl}
[V_{m-2}]=&\sum_{i=0}^{\frac{m-3}{2}}(-1)^{i}\frac{m-2}{m-2-2i}\binom{m-3-i}{i}x^{m-2-2i}\\
=&\sum_{i=1}^{\frac{m-1}{2}}(-1)^{i-1}\frac{m-2}{m-2i}\binom{m-2-i}{i-1}x^{m-2i}.\\
\end{array}$$
It follows that
$$\begin{array}{rl}
&\sum_{i=0}^{\frac{m-1}{2}}(-1)^{i}\frac{m-1}{m-1-i}\binom{m-1-i}{i}
x^{m-2i}\\
=&\sum_{i=1}^{\frac{m-1}{2}}(-1)^{i-1}\frac{m-2}{m-2i}\binom{m-2-i}{i-1}x^{m-2i}+\chi+\l\chi.
\end{array}$$
Hence we have
$$\begin{array}{rl}
x^m=&\sum_{i=1}^{\frac{m-1}{2}}(-1)^{i-1}
(\frac{m-1}{m-1-i}\binom{m-1-i}{i}+\frac{m-2}{m-2i}\binom{m-2-i}{i-1})
x^{m-2i}+\chi+\l\chi\\
=&\sum_{i=1}^{\frac{m-1}{2}}(-1)^{i-1}
\frac{m}{m-2i}\binom{m-1-i}{i}x^{m-2i}+\chi+\l\chi.\\
\end{array}$$
(2) It follows from Lemmas \ref{6.1}(2) and \ref{6.5}(3).
\end{proof}

Let ${\mathbb Z}\hat{D_n}[x]$ be the polynomial ring in one variable $x$ over ${\mathbb Z}\hat{D_n}$. Define $f(x), g(x)\in {\mathbb Z}\hat{D_n}[x]$ by
$$\begin{array}{l}
f(x)=\sum_{i=0}^{\frac{m-3}{2}}(-1)^{i}\frac{m-1}{m-1-i}\binom{m-1-i}{i}x^{m-1-2i}
+(-1)^{\frac{m-1}{2}}(1+\l),\\
g(x)=\sum_{i=1}^{\frac{m-1}{2}}(-1)^{i-1}\frac{m}{m-2i}\binom{m-1-i}{i}x^{m-2i}
+(1+\l)\chi.\\
\end{array}$$
Let $J$ be the ideal of ${\mathbb Z}\hat{D_n}[x]$ generated by $\l x-x$,
$\chi x-f(x)$ and $x^m-g(x)$. Then we have the following proposition.

\begin{proposition}\label{6.8}
$G_0(kD_n)\cong {\mathbb Z}\hat{D_n}[x]/J$, the factor ring of ${\mathbb Z}\hat{D_n}[x]$ modulo $J$.
\end{proposition}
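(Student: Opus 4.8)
The plan is to exhibit a surjective ring homomorphism $\Phi\colon {\mathbb Z}\hat{D_n}[x]\to G_0(kD_n)$, show that its kernel is exactly $J$, and conclude by the first isomorphism theorem. First I would define $\Phi$ on ${\mathbb Z}\hat{D_n}$ to be the canonical inclusion ${\mathbb Z}\hat{D_n}\hookrightarrow G_0(kD_n)$ from \reref{5.6} (i.e.\ $\e\mapsto 1$, $\l\mapsto[V_\l]$, $\chi\mapsto[V_\chi]$, $\l\chi\mapsto[V_{\l\chi}]$), and send the variable $x$ to $[V_1]\in G_0(kD_n)$. Since ${\mathbb Z}\hat{D_n}[x]$ is the polynomial ring over the commutative ring ${\mathbb Z}\hat{D_n}$ and $G_0(kD_n)$ is commutative with $[V_1]$ central, this determines a well-defined ring homomorphism $\Phi$. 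Surjectivity is immediate from \coref{6.6}(2), which says $G_0(kD_n)$ is generated as a ring by ${\mathbb Z}\hat{D_n}$ together with $x=[V_1]$.

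Next I would check that $J\subseteq\ker\Phi$: this amounts to verifying that the three generators of $J$ die under $\Phi$. The relation $\l x-x$ maps to $[V_\l][V_1]-[V_1]=0$ by \leref{6.5}(1) (equivalently \leref{6.1}(2)); the relation $\chi x-f(x)$ maps to $0$ by \coref{6.7}(2), since $f(x)$ is by definition the polynomial appearing there; and the relation $x^m-g(x)$ maps to $0$ by \coref{6.7}(1), since $g(x)$ is the polynomial appearing there. Hence $\Phi$ factors through a surjective ring homomorphism $\ol{\Phi}\colon {\mathbb Z}\hat{D_n}[x]/J\to G_0(kD_n)$.

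The remaining — and main — point is that $\ol{\Phi}$ is injective, for which I would show that ${\mathbb Z}\hat{D_n}[x]/J$ is spanned as a ${\mathbb Z}$-module by the images of $X_1=\{1,\l,\chi,\l\chi,x,x^2,\dots,x^{m-1}\}$; since $\ol{\Phi}$ sends this finite spanning set onto the ${\mathbb Z}$-basis $X_1$ of $G_0(kD_n)$ given in \coref{6.6}(1), a surjection between ${\mathbb Z}$-modules one of which is free of rank $\sharp X_1 = m+3$ and the other spanned by $m+3$ elements must be an isomorphism. To get the spanning statement: using $x^m\equiv g(x)$ one reduces every power $x^N$ with $N\ge m$ to a ${\mathbb Z}\hat{D_n}$-combination of $1,x,\dots,x^{m-1}$ by induction on $N$ (multiply the relation by $x^{N-m}$ and note $\deg g<m$); so ${\mathbb Z}\hat{D_n}[x]/J$ is spanned over ${\mathbb Z}\hat{D_n}$ by $1,x,\dots,x^{m-1}$, i.e.\ over ${\mathbb Z}$ by the products $\gamma\, x^j$ with $\gamma\in\hat{D_n}$ and $0\le j\le m-1$. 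Then the relations $\l x\equiv x$ and $\chi x\equiv f(x)$ (hence also $\l\chi\, x\equiv \l f(x)$, and $\l x^j\equiv x^j$ for all $j\ge 1$, $\chi x^j\equiv f(x)x^{j-1}$ reduced again mod $x^m-g(x)$) collapse all the mixed products $\gamma\,x^j$ with $j\ge 1$ into ${\mathbb Z}$-combinations of $1,\l,\chi,\l\chi$ and the pure powers $x,\dots,x^{m-1}$, leaving exactly the set $X_1$ as a ${\mathbb Z}$-spanning set.

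The step I expect to be the main obstacle is the bookkeeping in this last reduction: one must be careful that after using $\chi x\equiv f(x)$ to eliminate $\chi$ from a monomial $\chi x^j$, the resulting polynomial $f(x)x^{j-1}$ may again have degree $\ge m$ and must be re-reduced, and similarly for $\l\chi x^j$; a clean way to organize this is to first prove "${\mathbb Z}\hat{D_n}[x]/J$ is ${\mathbb Z}$-spanned by $\gamma x^j$, $\gamma\in\hat{D_n}$, $0\le j\le m-1$", and then prove separately by a short induction on $j$ that $\chi x^j$ and $\l\chi x^j$ lie in the ${\mathbb Z}$-span of $\{1,\l,\chi,\l\chi,x,\dots,x^{m-1}\}$ for $1\le j\le m-1$. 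Once the cardinality count $\sharp X_1=m+3=\sharp\{[V_i]\mid i\in I\}$ is invoked, injectivity of $\ol{\Phi}$ follows and the proof is complete.
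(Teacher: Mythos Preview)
Your proposal is correct and follows essentially the same route as the paper's proof: define the homomorphism $\Phi$ extending the inclusion ${\mathbb Z}\hat{D_n}\hookrightarrow G_0(kD_n)$ with $x\mapsto[V_1]$, use \leref{6.5}(1) and \coref{6.7} to check $J\subseteq\ker\Phi$, and then establish injectivity of $\ol{\Phi}$ by showing that the images of $X_1$ span ${\mathbb Z}\hat{D_n}[x]/J$ as a ${\mathbb Z}$-module and map onto the ${\mathbb Z}$-basis $X_1$ of $G_0(kD_n)$ from \coref{6.6}(1). The paper states the spanning step in one line (``By the definition of $J$, \dots''), whereas you spell out the reduction procedure in more detail; but the logic and the final rank-counting argument are identical.
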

\begin{proof}
By Corollaries \ref{6.2} and \ref{6.6}(2), the ring embedding ${\mathbb Z}\hat{D_n}\hookrightarrow G_0(kD_n)$ can be extended to a ring epimorphism
$\phi: {\mathbb Z}\hat{D_n}[x]\ra G_0(kD_n)$ by $\phi(x)=[V_1]$. By Lemma \ref{6.5}(1) and Corollary \ref{6.7}, $\phi(J)=0$. Hence $\phi$ induces a ring epimorphism
$\ol{\phi}: {\mathbb Z}\hat{D_n}[x]/J\ra G_0(kD_n)$ given by $\ol{\phi}(\ol z)=\phi(z)$, where $\ol{z}$ denotes the image of $z\in {\mathbb Z}\hat{D_n}[x]$ under the canonical epimorphism ${\mathbb Z}\hat{D_n}[x]\ra{\mathbb Z}\hat{D_n}[x]/J$.
By the definition of $J$, ${\mathbb Z}\hat{D_n}[x]/J$ is generated, as a $\mathbb{Z}$-module, by $U:=\{\ol{1}, \ol{\l}, \ol{\chi}, \ol{\l}\ol{\chi}, \ol{x}, \ol{x}^2, \cdots, \ol{x}^{m-1}\}$. By Corollary \ref{6.6}(1), $\ol{\phi}(U)$ is a $\mathbb{Z}$-basis of $G_0(kD_n)$. It follows that $U$ is a $\mathbb{Z}$-basis of ${\mathbb Z}\hat{D_n}[x]/J$ and $\ol{\phi}$ is a ring isomorphism.
\end{proof}

\begin{remark}\label{6.9}
From Proposition \ref{6.8}, $G_0(kD_n)$ is a commutative ring generated by its subring ${\mathbb Z}\hat{D_n}$ and an element $x (:=[V_1])$ subject to the three relations given in Lemma \ref{6.5}(1) and Corollary \ref{6.7}.
\end{remark}

Let $y_{\b}=[V(\e,\b)]$ in $G_0(H)$ for any $\b\in k^{\ti}$. Then by Lemma \ref{5.2}, one gets the following lemma.

\begin{lemma}\label{6.10}
$G_0(H)$ is generated, as a ring, by $G_0(kD_n)\cup\{y_{\b}|\b\in k^{\ti}\}$.
\end{lemma}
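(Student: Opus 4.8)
The plan is to show that every simple $H$-module, hence every basis element $[V]$ of $G_0(H)$, lies in the subring generated by $G_0(kD_n)$ together with the elements $y_\b=[V(\e,\b)]$. By Corollary~\ref{3.6}(2), together with the refinement at the start of this section, a complete list of simple $H$-modules is $\{V_i\mid i\in I\}\cup\{V(j,\b)\mid j\in I_0,\ \b\in k^{\ti}\}$. The modules $V_i$ already lie in $G_0(kD_n)$, which is a subring of $G_0(H)$ by Remark~\ref{5.6}, so nothing is needed for them. It therefore suffices to express each $[V(j,\b)]$ as a polynomial in the generators.

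First I would handle the two-dimensional simple $kD_n$-modules $V_j$ with $1\le j\le \frac{m-1}{2}$, i.e.\ the indices $j\in I_0$ other than $\e$ and $\l$. By Lemma~\ref{5.2}(2) we have $V_j\ot V(\e,\b)\cong V(j,\b)$ as $H$-modules, so in $G_0(H)$ one gets $[V(j,\b)]=[V_j]\,y_\b$. Since $[V_j]\in G_0(kD_n)$, this exhibits $[V(j,\b)]$ as a product of an element of $G_0(kD_n)$ and one of the $y_\b$. Next, for $j=\l$, Lemma~\ref{5.2}(2) gives $V_\l\ot V(\e,\b)\cong V(\l,\b)$, so $[V(\l,\b)]=\l\,y_\b$ with $\l=[V_\l]\in{\mathbb Z}\hat{D_n}\subseteq G_0(kD_n)$. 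Finally, for $j=\e$, the module $V(\e,\b)$ is itself $y_\b$, one of the designated generators. This covers every simple $H$-module.

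Putting these together: $G_0(H)$ has a $\mathbb Z$-basis consisting of the classes of simple modules, each of which has now been written as a $\mathbb Z$-linear combination (in fact a single monomial) in elements of $G_0(kD_n)$ and the $y_\b$. Hence the subring generated by $G_0(kD_n)\cup\{y_\b\mid \b\in k^{\ti}\}$ contains a $\mathbb Z$-basis of $G_0(H)$ and therefore equals $G_0(H)$, which is the assertion of the lemma.

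The content here is essentially bookkeeping rather than a genuine obstacle: the one point that must be invoked carefully is that the list of simple $H$-modules really is exhausted by the $V_i$ and the $V(j,\b)$ with $j\in I_0$ (using $s=|\chi|=|\chi(a^m)|=2$ and the identifications $V(\chi,\b)\cong V(\e,\b)$, $V(\l\chi,\b)\cong V(\l,\b)$, $V(j,\b)\cong V(m-j,\b)$ noted earlier in this section), so that no simple module escapes the argument. Once that is in place, Lemma~\ref{5.2}(2) does all the work and the proof is a couple of lines.
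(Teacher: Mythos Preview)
Your proof is correct and follows the same route as the paper: the paper's proof is the single sentence ``By Lemma~\ref{5.2}, one gets the following lemma,'' and you have simply unpacked what that sentence means, invoking the classification of simple $H$-modules and the identity $[V(j,\b)]=[V_j]\,y_\b$ coming from Lemma~\ref{5.2}(2).
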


\begin{lemma}\label{6.11}
Let $\a, \b\in k^{\ti}$ with $\a\neq-\b$. Then the following hold in $G_0(H)$:
$$\chi y_{\b}=y_{\b};\ y_{\a}y_{\b}=2y_{\a+\b};\ y_{\b}y_{-\b}=2(1+\chi).$$
\end{lemma}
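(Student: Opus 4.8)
The plan is to compute each of the three products directly using Lemma~\ref{5.2} together with the decomposition rule for $V_p(i,\a)\ot V_t(j,\b)$ from Proposition~\ref{5.4}, specialized to the dihedral setting where $s=|\chi|=2$, every $\om_i\in\{1,-1\}$, and hence $\om_i^s=1$. All three identities are statements about elements of $G_0(H)$, so it suffices to pass from module isomorphisms to the corresponding equalities of classes.

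First I would handle $\chi y_\b=y_\b$. In $G_0(H)$ the element $\chi$ is the class $[V_\chi]$, and $y_\b=[V(\e,\b)]=[V_1(\e,\b)]$. By Lemma~\ref{5.2}(2), $V_\chi\ot V_1(\e,\b)\cong V_1(\chi,\b)$, and we already observed in the excerpt that $V_t(\chi,\b)\cong V_t(\e,\b)$ for all $t\ge1$ (since $\chi=\s^{\,?}(\e)$ within its equivalence class, i.e. $[\chi]=[\e]$ in $I_0$). Hence $V_\chi\ot V(\e,\b)\cong V(\e,\b)$, giving $\chi y_\b=y_\b$ in $G_0(H)$.

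Next, for $y_\a y_\b=2y_{\a+\b}$ and $y_\b y_{-\b}=2(1+\chi)$, I would apply Proposition~\ref{5.4} with $i=j=\e$ and $p=t=1$. There $N_{\e,\e}^l=\d_{l,\e}$, $\om_\e^s=1$, and $\min\{p,t\}=1$ with $2u-1+|p-t|=1$, while $m$ ranges over $0,\dots,s-1=\{0,1\}$. So $V_1(\e,\a)\ot V_1(\e,\b)\cong\bigoplus_{m=0}^{1}V_1(\sigma^m(\e),\a+\b)=V_1(\e,\a+\b)\oplus V_1(\s(\e),\a+\b)$. Now $\s(\e)=\chi$ (since $V_\chi\ot V_\e\cong V_\chi$), and $V_1(\chi,\g)\cong V_1(\e,\g)$ as noted above. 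When $\a+\b\ne0$ (the case $\a\ne-\b$), $V_1(\e,\a+\b)=V(\e,\a+\b)$, so the right side is $2V(\e,\a+\b)$, giving $y_\a y_\b=2y_{\a+\b}$. For the last identity take $\a=-\b$, i.e. $\a+\b=0$: by the ``moreover'' clause of Proposition~\ref{5.4} and Remark~\ref{4.1}, $V_1(\sigma^m(\e),0)\cong V_{s}(\sigma^m(\e))=V_2(\sigma^m(\e))$; here we instead want the $G_0$ class, and since $V_2(i)$ has composition factors $V_i$ and $V_{\s(i)}$ (Proposition~\ref{4.3}), $[V_2(\e)]=[V_\e]+[V_\chi]=1+\chi$ in $G_0(H)$; summing over $m=0,1$ and using $\s(\e)=\chi$, $\s(\chi)=\e$ gives $[V_2(\e)]+[V_2(\chi)]=2(1+\chi)$, hence $y_\b y_{-\b}=2(1+\chi)$.

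I expect the only delicate point to be bookkeeping the identifications $\s(\e)=\chi$ and $V_t(\chi,\g)\cong V_t(\e,\g)$ correctly, i.e. making sure the index $\sigma^m(\e)$ for $m\in\{0,1\}$ runs exactly over the pair $\{\e,\chi\}$ and that these two contribute equal classes in $G_0(H)$; once that is pinned down, the three formulas drop out of Proposition~\ref{5.4} and Lemma~\ref{5.2} with no real computation. A secondary point is being careful, in the $\a=-\b$ case, to work in the Grothendieck ring (passing to composition factors) rather than with the indecomposable module $V_2(\cdot)$ itself.
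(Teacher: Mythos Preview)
Your proposal is correct and follows essentially the same route as the paper: both arguments use Lemma~\ref{5.2} and Proposition~\ref{5.4} (with $i=j=\e$, $p=t=1$, $s=2$) to compute the tensor products, invoke the identification $V_t(\chi,\g)\cong V_t(\e,\g)$ from Proposition~\ref{3.4}, and for the $\a=-\b$ case pass through $V_1(\cdot,0)\cong V_2(\cdot)$ via Remark~\ref{4.1} and read off the composition factors of $V_2(\e)$, $V_2(\chi)$ from Proposition~\ref{4.3}. The only cosmetic difference is that the paper cites Proposition~\ref{3.4} explicitly for $V(\chi,\b)\cong V(\e,\b)$, whereas you invoke the equivalence $[\chi]=[\e]$ in $I_0$ directly.
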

\begin{proof}
By Lemma \ref{5.2} and Proposition \ref{3.4}, one knows that
$V_{\chi}\ot V(\e, \b)\cong V(\chi, \b)\cong V(\e, \b)$. Hence $\chi y_{\b}=y_{\b}$.
By Propositions \ref{5.4} and \ref{3.4}, we have
$V(\e, \a)\ot V(\e, \b)\cong V(\e, \a+\b)\oplus V(\chi, \a+\b)\cong 2V(\e, \a+\b)$.
Hence $y_{\a}y_{\b}=2y_{\a+\b}$. By Proposition \ref{5.4}, Remark \ref{4.1} and $|\chi|=2$, one gets $V(\e, \b)\ot V(\e, -\b)\cong V(\e, 0)\oplus V(\chi, 0)\cong V_2(\e)\oplus V_2(\chi)$. Then it follows from the proof of Proposition \ref{4.3} that
$y_{\b}y_{-\b}=[V_2(\e)]+[V_2(\chi)]=2(1+\chi)$.
\end{proof}

\begin{lemma}\label{6.12}
The set $X_2:=\{1,\l,\chi,\l\chi, x^l, \chi x^l|1\<l\<\frac{m-1}{2}\}$ is also a $\mathbb Z$-basis of $G_0(kD_n)$.
\end{lemma}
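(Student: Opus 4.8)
The plan is to establish that $X_2$ spans $G_0(kD_n)$ as a $\mathbb Z$-module; since $X_2$ has $4+2\cdot\frac{m-1}{2}=m+3=\sharp I$ elements and $G_0(kD_n)$ is $\mathbb Z$-free of rank $\sharp I$ with basis $\{[V_i]\mid i\in I\}$, and a spanning set of a free $\mathbb Z$-module of rank $N$ consisting of exactly $N$ elements is automatically a basis, this is all that needs to be checked.

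The argument will rest on two facts about the \emph{standard} basis $\{[V_i]\mid i\in I\}=\{1,\l,\chi,\l\chi,[V_1],\dots,[V_{m-1}]\}$. First, Lemma~\ref{6.5}(2,3) expresses each $[V_l]$ with $1\<l\<m-1$ as $x^l$ plus a $\mathbb Z$-linear combination of $x^{l-2},x^{l-4},\dots$ together with, when $l$ is even, a $\mathbb Z$-multiple of $1+\l$ (the coefficients being integers, as recorded at the start of the proof of Lemma~\ref{6.5}). Hence the transition between the $x^l$ and the $[V_l]$ is an integral unitriangular change of basis; in particular, for $l$ running through $1,\dots,\frac{m-1}{2}$ the two sets $\{1,\l,x,x^2,\dots,x^{\frac{m-1}{2}}\}$ and $\{1,\l,[V_1],[V_2],\dots,[V_{\frac{m-1}{2}}]\}$ span one and the same $\mathbb Z$-submodule, which I denote $P$. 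Second, by Lemma~\ref{6.1}(2) one has $\chi[V_t]=[V_{m-t}]$ for $1\<t\<m-1$, while $\chi\cdot1=\chi$ and $\chi\l=\l\chi$ in $\mathbb Z\hat{D_n}$; multiplying the two generating sets of $P$ by $\chi$ then shows that $\chi P$ is generated both by $\{\chi,\l\chi,\chi x,\dots,\chi x^{\frac{m-1}{2}}\}$ and by $\{\chi,\l\chi,[V_{m-1}],[V_{m-2}],\dots,[V_{\frac{m+1}{2}}]\}$.

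The final step uses the hypothesis that $m$ is odd: then $t\mapsto m-t$ maps $\{1,\dots,\frac{m-1}{2}\}$ bijectively onto $\{\frac{m+1}{2},\dots,m-1\}$, so the generators $\{1,\l,[V_1],\dots,[V_{\frac{m-1}{2}}]\}$ of $P$ and the generators $\{\chi,\l\chi,[V_{\frac{m+1}{2}}],\dots,[V_{m-1}]\}$ of $\chi P$ are disjoint and their union is precisely $\{[V_i]\mid i\in I\}$. Therefore $P+\chi P=G_0(kD_n)$, and $P+\chi P$ is spanned by the union of the corresponding $x$-generating sets, i.e. by $\{1,\l,x,\dots,x^{\frac{m-1}{2}}\}\cup\{\chi,\l\chi,\chi x,\dots,\chi x^{\frac{m-1}{2}}\}=X_2$. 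This is exactly the spanning claim, so the lemma follows. The one place that calls for care is the first fact: one must check that the triangular passage between the $x^l$ and the $[V_l]$ for $l\<\frac{m-1}{2}$ is genuinely over $\mathbb Z$, so that $P$ truly equals the $\mathbb Z$-span of $\{1,\l,[V_1],\dots,[V_{\frac{m-1}{2}}]\}$; everything else is a parity count that hinges on $m$ being odd.
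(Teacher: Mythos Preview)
Your proof is correct. Both arguments reduce to showing that $X_2$ spans and then invoke the cardinality count $\sharp X_2=m+3=\sharp I$, and both rest on the same two ingredients: the unitriangular integral transition between $\{x^l\}$ and $\{[V_l]\}$ from Lemma~\ref{6.5}, and the relation $\chi[V_t]=[V_{m-t}]$ from Lemma~\ref{6.1}(2). The organization differs, however. You split $X_2$ into the generators of a submodule $P$ and their $\chi$-multiples, rewrite each half in the $[V_i]$-basis, and observe that since $m$ is odd the two halves exactly cover $\{[V_i]\mid i\in I\}$. The paper instead lets $N=\mathbb{Z}X_2$ and shows $N$ is an ideal: $\lambda N=N$ and $\chi N=N$ are easy, and the crux is $xN\subseteq N$, for which one needs $x^{\frac{m+1}{2}}\in N$ (obtained from $[V_{\frac{m+1}{2}}]=\chi[V_{\frac{m-1}{2}}]\in\chi N=N$ together with Lemma~\ref{6.5}); then $1\in N$ forces $N=G_0(kD_n)$. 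Your route is a bit more direct and makes the role of the bijection $t\mapsto m-t$ on $\{1,\dots,\frac{m-1}{2}\}$ explicit; the paper's route packages the same computation as closure under the ring generators.
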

\begin{proof}
Let $N$ be the $\mathbb Z$-submodule of $G_0(kD_n)$ generated by $X_2$. Then by Lemma \ref{6.5}(1), $\l N=N$. Clearly, $\chi N=N$. By Lemma \ref{6.1}(2), $\chi[V_{\frac{m-1}{2}}]=[V_{\frac{m+1}{2}}]$. Then it follows from Lemma \ref{6.5}(2, 3) that $x^{\frac{m+1}{2}}\in N$. This implies $xN\subseteq N$ by $\l N=N$ and $\chi N=N$. Therefore, it follows from Corollary \ref{6.6}(2) that $N$ is an ideal of $G_0(kD_n)$, and so $N=G_0(kD_n)$ by $1\in N$. Thus, the lemma follows from $\sharp X_2=\sharp\{[V_i]|i\in I\}$.
\end{proof}

\begin{lemma}\label{6.13}
$G_0(H)$ has a $\mathbb{Z}$-basis $X_1\cup X_3$, where $X_1$ is the $\mathbb Z$-basis of $G_0(kD_n)$ given in Corollary \ref{6.6}(1) and $X_3:=\{\l y_{\b}, x^ly_{\b}|0\<l\<\frac{m-1}{2}, \b \in k^{\ti}\}$.
\end{lemma}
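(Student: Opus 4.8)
The plan is to show that $X_1 \cup X_3$ spans $G_0(H)$ over $\mathbb{Z}$ and that it has the right cardinality to be a basis — or, more precisely, since $G_0(H)$ is not finitely generated over $\mathbb{Z}$ (the scalars $\b$ range over the infinite set $k^{\ti}$), to show that $X_1 \cup X_3$ is $\mathbb{Z}$-linearly independent and spanning directly. First I would recall from Corollary~\ref{3.6}(2) and Lemma~\ref{6.1}(1,2) that a full set of non-isomorphic simple $H$-modules is $\{V_i \mid i \in I\} \cup \{V(j,\b) \mid j \in I_0,\ \b \in k^{\ti}\}$, so that $G_0(H)$ has the $\mathbb{Z}$-basis $\{[V_i] \mid i \in I\} \cup \{[V(j,\b)] \mid j \in I_0,\ \b \in k^{\ti}\}$. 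By Corollary~\ref{6.6}(1), the first part $\{[V_i] \mid i \in I\}$ has the same $\mathbb{Z}$-span as $X_1$; this is already established. So the real content is to show that, for each fixed $\b \in k^{\ti}$, the set $\{[V(j,\b)] \mid j \in I_0\}$ has the same $\mathbb{Z}$-span as $\{\l y_\b,\ x^l y_\b \mid 0 \le l \le \frac{m-1}{2}\}$, working inside the $G_0(kD_n)$-module structure.

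The key computation is the following: by Lemma~\ref{5.2}(2) and (3) together with $\om_i^s = 1$ (since $s=2$, noted just before Corollary~\ref{6.2}), one has $V_i \ot V(\e,\b) \cong V(\e,\b) \ot V_i \cong V(i,\b)$ for every $i \in I$, hence in $G_0(H)$
\[
[V_i]\, y_\b = [V(i,\b)].
\]
Now $I_0 = \{\e, \l, 1, 2, \dots, \frac{m-1}{2}\}$, and using $V(\chi,\b)\cong V(\e,\b)$, $V(\l\chi,\b)\cong V(\l,\b)$, $V(j,\b)\cong V(m-j,\b)$ (recorded after Corollary~\ref{4.13}) we get $[V_\chi] y_\b = y_\b$, $[V_{\l\chi}] y_\b = \l y_\b$, and $[V_j] y_\b = [V_{m-j}] y_\b$. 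Therefore $\{[V(j,\b)] \mid j \in I_0\} = \{y_\b,\ \l y_\b,\ [V_1]y_\b, \dots, [V_{\frac{m-1}{2}}]y_\b\}$. By Lemma~\ref{6.5}(2,3), each $[V_l]$ for $1 \le l \le \frac{m-1}{2}$ is a $\mathbb{Z}$-linear combination of $1, \l, x, x^2, \dots, x^l$ with leading term $x^l$; multiplying through by $y_\b$ and using $\l y_\b \cdot (\text{scalar from } \mathbb{Z})$ stays in $X_3$-span (and noting $1 \cdot y_\b = y_\b = x^0 y_\b \in X_3$), one sees that the $\mathbb{Z}$-span of $\{[V(j,\b)] \mid j \in I_0\}$ equals the $\mathbb{Z}$-span of $\{\l y_\b,\ x^l y_\b \mid 0 \le l \le \frac{m-1}{2}\}$. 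The triangular change of basis (leading term $x^l$, lower terms involving $1, \l, x, \dots, x^{l-1}$) is invertible over $\mathbb{Z}$, so the two sets have the same span, and $X_3$ restricted to a fixed $\b$ is linearly independent because $\{[V(j,\b)] \mid j \in I_0\}$ is part of a $\mathbb{Z}$-basis.

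Finally I would assemble: $G_0(H) = \mathbb{Z}\text{-span}(\{[V_i]\}) \oplus \mathbb{Z}\text{-span}(\{[V(j,\b)]\}_{j,\b})$ as $\mathbb{Z}$-modules, the first summand has $\mathbb{Z}$-basis $X_1$ (Corollary~\ref{6.6}(1)) and the second has $\mathbb{Z}$-basis $\bigcup_{\b} \{\l y_\b,\ x^l y_\b \mid 0 \le l \le \frac{m-1}{2}\} = X_3$ by the paragraph above (the sets for distinct $\b$ involve disjoint basis elements $[V(j,\b)]$, so independence across different $\b$ is automatic). Hence $X_1 \cup X_3$ is a $\mathbb{Z}$-basis of $G_0(H)$. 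The one point requiring a little care — the main obstacle — is verifying that the change of basis between $\{[V_l] y_\b : 1 \le l \le \frac{m-1}{2}\} \cup \{y_\b, \l y_\b\}$ and $\{x^l y_\b : 0 \le l \le \frac{m-1}{2}\} \cup \{\l y_\b\}$ is genuinely unimodular over $\mathbb{Z}$; this follows from the explicit triangular shape in Lemma~\ref{6.5} exactly as in the proof of Corollary~\ref{6.6}(1), so it reduces to an argument already carried out.
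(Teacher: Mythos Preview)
Your proposal is correct and follows essentially the same strategy as the paper's proof: split $G_0(H)$ as $G_0(kD_n)\oplus L$ with $L$ spanned by the $[V(j,\b)]$, and then for each fixed $\b$ exhibit a unimodular triangular change of basis between $\{[V(j,\b)]:j\in I_0\}$ and $\{\l y_\b,\,x^ly_\b:0\le l\le\frac{m-1}{2}\}$. The only notable difference is bookkeeping: the paper establishes spanning by invoking Lemma~\ref{6.12} (that $X_2$ is a basis of $G_0(kD_n)$) together with $\chi y_\b=y_\b$, and establishes independence by computing $x^ly_\b$ via Lemma~\ref{6.4}; you instead run both directions through Lemma~\ref{6.5}, expressing $[V_l]y_\b$ in terms of the $x^jy_\b$ and reading off the triangular (leading-coefficient-$1$) shape. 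Your route is marginally more direct since it avoids the separate basis $X_2$, while the paper's route makes the congruence $x^ly_\b\equiv[V(l,\b)]$ explicit; either way the argument is the same triangular change of basis.
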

\begin{proof}
Let $L$ be the $\mathbb{Z}$-submodules of $G_0(H)$ generated by $\{[V(i,\b)]|i\in I,\b \in k^{\ti}\}$. Then $G_0(H)=G_0(kD_n)\oplus L$ as $\mathbb{Z}$-modules and $L$ has a $\mathbb Z$-basis $\{[V(i,\b)]|i\in I_0,\b \in k^{\ti}\}$. By Lemma \ref{5.2}, $L$ is a $G_0(kD_n)$-submodule of $G_0(H)$, and is generated, as a $G_0(kD_n)$-module, by $\{y_{\b}|\b\in k^{\ti}\}$. Hence it follows from Lemmas \ref{6.11} and \ref{6.12}, $L$ is generated, as a $\mathbb Z$-module, by $X_3$. It is left to show that $X_3$ is linearly independent over $\mathbb Z$. Note that $y_{\b}=[V(\e,\b)]$.
By Lemmas \ref{5.2}(2) and \ref{6.4}, $\l y_{\b}=[V(\l,\b)]$, $xy_{\b}=[V(1,\b)]$
and 
$$\begin{array}{c}
x^ly_{\b}\equiv [V(l,\b)] \text{ modulo } \mathbb{Z}[V(\e,\b)]+\mathbb{Z}[V(\l,\b)]+\sum_{i=1}^{l-1}\mathbb{Z}[V(i,\b)]
\end{array}$$
for all $2\<l\<\frac{m-1}{2}$. Since $\{[V(\e,\b)], [V(\l,\b)], [V(l,\b)]|1\<l\<\frac{m-1}{2}, \b \in k^{\ti}\}$ is linearly independent over $\mathbb{Z}$, so is $\{\l y_{\b}, x^l\omega_{\b}|0\<l\<\frac{m-1}{2},\b \in k^{\ti}\}$. This completes the proof.
\end{proof}

Let  $Y=\{y_{\b}|\b \in k^{\ti}\}$ and $G_0(kD_n)[Y]$ the polynomial ring in variables $Y$ over $G_0(kD_n)$. Put
$$U:=\{\chi y_{\b}-y_{\b}, y_{\a}y_{\b}-2y_{\a+\b}, y_{\b}y_{-\b}-2(1+\chi)|\a, \b\in k^{\ti} \text{ with } \a\neq -\b\},$$
and let $(U)$ be the ideal of $G_0(kD_n)[Y]$ generated by $U$.

\begin{theorem}\label{6.14}
$G_0(H)$ is isomorphic to the factor ring $G_0(kD_n)[Y]/(U)$.
\end{theorem}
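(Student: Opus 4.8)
The argument will run parallel to the proof of Proposition~\ref{6.8}: exhibit the evident surjection from $G_0(kD_n)[Y]$ onto $G_0(H)$, check that $(U)$ lies in its kernel, and then identify the induced map as an isomorphism via a $\mathbb Z$-basis count.

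First I would build a ring homomorphism $\phi:G_0(kD_n)[Y]\ra G_0(H)$ by extending the canonical ring embedding $G_0(kD_n)\hookrightarrow G_0(H)$ of Remark~\ref{5.6} through $\phi(y_{\b})=[V(\e,\b)]$ for each $\b\in k^{\ti}$. This is well defined because $G_0(H)$ is commutative by Corollary~\ref{6.2}, and it is an epimorphism by Lemma~\ref{6.10}. The three identities of Lemma~\ref{6.11} say exactly that $\phi$ kills each generator of $U$ (applying $\phi$ to $\chi y_{\b}-y_{\b}$, $y_{\a}y_{\b}-2y_{\a+\b}$ and $y_{\b}y_{-\b}-2(1+\chi)$ and using that $\phi|_{G_0(kD_n)}$ is the inclusion), so $\phi$ descends to a ring epimorphism $\ol{\phi}:R:=G_0(kD_n)[Y]/(U)\ra G_0(H)$.

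The heart of the proof is to show that $R$ is spanned as a $\mathbb Z$-module by the images of $X_1\cup X_3$, with $X_1=\{1,\l,\chi,\l\chi,x,x^2,\cdots,x^{m-1}\}$ the $\mathbb Z$-basis of $G_0(kD_n)$ from Corollary~\ref{6.6}(1) and $X_3=\{\l y_{\b},\,x^ly_{\b}\mid 0\<l\<\frac{m-1}{2},\ \b\in k^{\ti}\}$. Using $y_{\a}y_{\b}=2y_{\a+\b}$ (valid whenever $\a\neq-\b$) and $y_{\b}y_{-\b}=2(1+\chi)$, a short induction on $r$ shows that modulo $(U)$ any monomial $y_{\b_1}\cdots y_{\b_r}$ with $r\>1$ equals an integer multiple of a single $y_{\g}$ or an integer multiple of $1+\chi\in G_0(kD_n)$; the relation $\chi y_{\b}=y_{\b}$ is what absorbs the factor $1+\chi$ produced when a pair $y_{\b}y_{-\b}$ is hit by a further $y_{\b'}$. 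Hence every element of $R$ is a $\mathbb Z$-combination of elements of $G_0(kD_n)$ together with elements $z\,y_{\g}$ with $z\in G_0(kD_n)$. Now $G_0(kD_n)=\mathbb Z X_1$ and also $G_0(kD_n)=\mathbb Z X_2$ for $X_2=\{1,\l,\chi,\l\chi,x^l,\chi x^l\mid 1\<l\<\frac{m-1}{2}\}$ by Lemma~\ref{6.12}; since $R$ is commutative and $\chi y_{\g}=y_{\g}$, each $w\,y_{\g}$ with $w\in X_2$ lies in $\mathbb Z X_3$. Therefore $R=\mathbb Z(\ol{X_1}\cup\ol{X_3})$.

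Finally, $\ol{\phi}$ restricts to a map from the spanning set $\ol{X_1}\cup\ol{X_3}$ of $R$ onto the set $X_1\cup X_3\subseteq G_0(H)$, which is a $\mathbb Z$-basis of $G_0(H)$ by Lemma~\ref{6.13}; this restriction is forced to be a bijection because its image is $\mathbb Z$-linearly independent. A surjective $\mathbb Z$-linear map sending a spanning set bijectively onto a basis is injective, so $\ol{\phi}$ is a ring isomorphism (and $\ol{X_1}\cup\ol{X_3}$ is in fact a $\mathbb Z$-basis of $R$). The only step demanding genuine care is the inductive normalization of monomials in the $y_{\b}$ and the ensuing collapse of $\chi$-multiples; the rest is formal once Lemmas~\ref{6.10}--\ref{6.13} are in hand.
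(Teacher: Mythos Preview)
Your proof is correct and follows essentially the same route as the paper's: build the epimorphism $\phi$, kill $(U)$ via Lemma~\ref{6.11}, reduce the quotient to the $\mathbb Z$-span of $\ol{X_1}\cup\ol{X_3}$ using the relations and Lemma~\ref{6.12}, and conclude by the basis match of Lemma~\ref{6.13}. The only difference is cosmetic: you spell out the induction reducing monomials $y_{\b_1}\cdots y_{\b_r}$ to a single $y_{\g}$ or to a multiple of $1+\chi$, whereas the paper compresses this into the sentence ``Clearly, $G_0(kD_n)[Y]/(U)=\pi(G_0(kD_n))+\sum_{\b\in k^{\ti}}\pi(G_0(kD_n)y_{\b})$''.
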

\begin{proof}
By Corollary \ref{6.2} and Lemma \ref{6.10}, the ring embedding $G_0(kD_n)\hookrightarrow G_0(H)$ can be extended to a ring epimorphism $\phi: G_0(kD_n)[Y]\ra G_0(H)$ by $\phi(y_{\b})=[V(\e, \b)]$ for all $\b\in k^{\ti}$. By Lemma \ref{6.11}, $\phi(U)=0$. Hence $\phi$ induces a ring epimorphism $\ol{\phi}: G_0(kD_n)[Y]/(U)\ra G_0(H)$ given by $\ol{\phi}(\pi(z))=\phi(z)$ for any $z\in  G_0(kD_n)[Y]$, where $\pi:  G_0(kD_n)[Y]\ra  G_0(kD_n)[Y]/(U)$ is the canonical ring epimorphism. Clearly, $\pi(\chi y_{\b})=\pi(y_{\b})$ and
$G_0(kD_n)[Y]/(U)=\pi(G_0(kD_n))+\sum_{\b\in k^{\ti}}\pi(G_0(kD_n)y_{\b})$.
Then by Lemma \ref{6.12}, $\sum_{\b\in k^{\ti}}\pi(G_0(kD_n)y_{\b})$ is generated, as a $\mathbb Z$-module, by $Y_1:=\{\pi(\l y_{\b}), \pi(x^ly_{\b})|0\<l\<\frac{m-1}{2}, \b\in k^{\ti}\}$. Hence $G_0(kD_n)[Y]/(U)$ is generated, as a $\mathbb Z$-module, by $\pi(X_1)\cup Y_1$, where $X_1$ is the $\mathbb Z$-basis of $G_0(kD_n)$ given in Corollary \ref{6.6}(1). It is easy to check that $\ol{\phi}(z_1)\neq\ol{\phi}(z_2)$ for any $z_1\neq z_2$ in $\pi(X_1)\cup Y_1$ and that $\ol{\phi}(\pi(X_1)\cup Y_1)$ is a $\mathbb Z$-basis of $G_0(H)$ by Lemma \ref{6.13}. Hence $\pi(X_1)\cup Y_1$ is $\mathbb Z$-basis of $G_0(kD_n)[Y]/(U)$ and $\ol{\phi}$ is a ring isomorphism.
\end{proof}

\subsection{\bf The Green ring of $H$}

In this subsection, we will investigate the Green ring $r(H)$. By Remark \ref{5.6},
${\mathbb Z}\hat{D_n}\subset G_0(kD_n)=r(kD_n)\subset r(H)$. Moreover, $\e=1$, the identity of $r(H)$, and ${\mathbb Z}\hat{D_n}\cong{\mathbb Z}K_4$.

Let $R$ be the $\mathbb{Z}$-submodule of $r(H)$ generated by
$\{[V_t(i)]|i\in I,t\>1\}$. By Proposition \ref{5.5}(1), $R$ is a subring of $r(H)$.
Clearly, $G_0(kD_n)\subset R$. By Proposition \ref{5.2}(1), we have the following lemma.

\begin{lemma}\label{6.15}
$R$ is a free $G_0(kD_n)$-module with a basis $\{[V_t(\e)]|t\>1\}$.
\end{lemma}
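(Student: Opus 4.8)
The plan is to show two things: first that $\{[V_t(\e)]\mid t\>1\}$ generates $R$ as a $G_0(kD_n)$-module, and second that this generating set is linearly independent over $G_0(kD_n)$, i.e.\ that $R=\oplus_{t\>1}G_0(kD_n)[V_t(\e)]$ as a $\mathbb Z$-module. The generation statement is essentially Lemma \ref{5.2}(1): for every $i\in I$ and every $t\>1$ one has $[V_t(i)]=[V_i][V_t(\e)]$ in $r(H)$, and since $[V_i]\in G_0(kD_n)=r(kD_n)$, the element $[V_t(i)]$ lies in $\sum_{t\>1}G_0(kD_n)[V_t(\e)]$. As $R$ is by definition the $\mathbb Z$-span of the $[V_t(i)]$, this shows $R=\sum_{t\>1}G_0(kD_n)[V_t(\e)]$.

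For independence, the key is to exhibit a $\mathbb Z$-basis of $R$ of the right shape. By Corollary \ref{6.6}(1), $G_0(kD_n)$ has the $\mathbb Z$-basis $X_1=\{1,\l,\chi,\l\chi,x,x^2,\dots,x^{m-1}\}$ where $x=[V_1]$, and by Lemma \ref{6.4} together with Lemma \ref{5.2}(1) the products $x^l[V_t(\e)]=[V_1^{\ot l}][V_t(\e)]=[V_1^{\ot l}\ot V_t(\e)]$ expand, modulo lower-index terms, into $[V_t(j)]$'s. More precisely, for fixed $t$ the set $\{[V_t(i)]\mid i\in I\}$ is $\mathbb Z$-linearly independent in $r(H)$ (these are pairwise non-isomorphic indecomposables by Corollary \ref{4.4}, hence distinct basis elements of $r(H)$), and by the triangular change of basis provided by Lemma \ref{6.4}/\ref{6.5} the set $\{[V_t(i)]\mid i\in I\}$ and the set $\{g[V_t(\e)]\mid g\in X_1\}$ span the same $\mathbb Z$-submodule of $r(H)$; in particular the latter is also $\mathbb Z$-linearly independent. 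Since for distinct $t$ the elements $[V_t(i)]$ involve disjoint sets of basis elements of $r(H)$, the union $\bigcup_{t\>1}\{g[V_t(\e)]\mid g\in X_1\}$ is $\mathbb Z$-linearly independent, which is exactly the assertion that $R$ is free over $G_0(kD_n)$ on $\{[V_t(\e)]\mid t\>1\}$.

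The main obstacle I anticipate is making the triangularity argument clean: one must check that the coefficient matrix relating $\{x^l[V_t(\e)]\}$ and $\{[V_t(j)]\}$ (for a fixed $t$) is invertible over $\mathbb Z$. This is where Lemma \ref{6.4} does the work — it shows $V_1^{\ot l}\cong V_l\oplus(\text{sum of }V_j\text{ with }j<l\text{ and one-dimensionals})$ with the leading coefficient equal to $1$ — so after multiplying by $V_t(\e)$ and using Lemma \ref{5.2}(1) the transition is unitriangular up to the action of $\{1,\l,\chi,\l\chi\}$, and one invokes Corollary \ref{6.6}(1) to conclude the full set $X_1$ gives a $\mathbb Z$-basis. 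Everything else (that distinct $t$ contribute independently, that $G_0(kD_n)\subset R$) is immediate from Proposition \ref{5.5}(1), Corollary \ref{4.4}, and the definition of $R$.
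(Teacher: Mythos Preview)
Your proof is correct and rests on the same key identity as the paper, namely Lemma \ref{5.2}(1): $[V_i][V_t(\e)]=[V_t(i)]$. The paper states only this and leaves the rest implicit.

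The one genuine difference is your choice of $\mathbb Z$-basis for $G_0(kD_n)$. You work with $X_1=\{1,\l,\chi,\l\chi,x,\dots,x^{m-1}\}$, which forces you into the triangularity discussion involving Lemmas \ref{6.4}--\ref{6.5} and Corollary \ref{6.6}. This detour is unnecessary here: $G_0(kD_n)$ also has the ``tautological'' $\mathbb Z$-basis $\{[V_i]\mid i\in I\}$, and with that choice Lemma \ref{5.2}(1) maps this basis bijectively onto $\{[V_t(i)]\mid i\in I\}$ for each fixed $t$. Since $\{[V_t(i)]\mid i\in I,\ t\>1\}$ is a $\mathbb Z$-basis of $R$ (pairwise non-isomorphic indecomposables by Corollary \ref{4.4}), freeness is immediate. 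Your argument reaches the same conclusion, but the paper's route avoids the whole ``anticipated obstacle'' you mention: there is no matrix to invert if one uses the natural basis of $G_0(kD_n)$ rather than $X_1$.
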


By Proposition \ref{5.5}(1) or \cite[Theorem 3.15]{H.H}, we have the following lemma.

\begin{lemma}\label{6.16}
Let $t\>2$. Then the following hold:
\begin{enumerate}
\item[(1)] $V_2(\e)\ot V_1(\e)\cong V_2(\e)$ and $V_3(\e)\ot V_1(\e)\cong V_3(\e)$;
\item[(2)] if $t$ is even, then $V_2(\e)\ot V_t(\e)\cong V_t(\e)\oplus V_t(\chi)$;
\item[(3)] if $t$ is odd, then $V_2(\e)\ot V_t(\e)\cong V_{t+1}(\e)\oplus V_{t-1}(\chi)$;
\item[(4)] if $t\>3$, then $V_3(\e)\ot V_t(\e)\cong V_{t+2}(\e)\oplus V_{t-2}(\e)\oplus V_t(\chi)$.
\end{enumerate}
\end{lemma}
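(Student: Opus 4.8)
The plan is to derive Lemma~\ref{6.16} directly from the general tensor-product decomposition formula in Proposition~\ref{5.5}(1), specialized to the present situation where $s=|\chi|=2$, and where one of the two tensor factors is $V_3(\e)$ or $V_2(\e)$. Recall that for $H=kD_n(\chi,a^m,0)$ we have $\omega_\e=1$ and $N_{\e,j}^l=[V_\e\ot V_j:V_l]=\delta_{j,l}$, so all the structure constants $N_{i,j}^l$ collapse to Kronecker deltas when $i=\e$. Also $\sigma$ has order $2$ here (it is the permutation induced by $V_\chi\ot-$), and $\sigma(\e)=\chi$. So the formulas of Proposition~\ref{5.5} become quite short once specialized. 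Concretely, I would write $n$ (the index attached to the first factor) as $3$ and $t$ (attached to the second) variable, apply the division-with-remainder parametrization $n=r's+p'$, $t=rs+p$ with $s=2$, and read off each of the four cases.

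First I would handle part (1), $V_2(\e)\ot V_1(\e)\cong V_2(\e)$ and $V_3(\e)\ot V_1(\e)\cong V_3(\e)$: here the second factor is $V_1(\e)=V_\e$, which is the tensor identity of $\mathrm{mod}\,kD_n$ viewed inside $\mathrm{mod}\,H$ (it is the one-dimensional module $V_\e$ on which $x$ acts as zero), and $V_t(\e)\ot V_\e\cong V_t(\e)$ is immediate from Lemma~\ref{5.2}(1) with $i=\e$ (since $V_\e\ot V_t(\e)\cong V_t(\e)$ and $V_t(\e)\cong V_t(\e)$). Alternatively it is the $p=t=1$, $r=r'=0$ instance of Proposition~\ref{5.5}(1). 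Then for parts (2), (3), (4) I would plug into Proposition~\ref{5.5}(1). For $V_2(\e)\ot V_t(\e)$: take $n=2$, so $r'=1$, $p'=0$; write $t=2r+p$ with $p\in\{0,1\}$. When $t$ is even, $p=0$, so $p+p'=0\le s=2$ and $p=p'=0$ (the $p\le p'$ branch): the first and third sums are empty (ranges $0\le u\le p-1$ and $p'\le u\le p+p'-1$ are empty), the second sum over $p\le u\le p'-1$ is also empty, and the fourth sum over $p+p'\le u\le s-1$, i.e. $0\le u\le 1$, with $m$ ranging $0\le m\le r-1$, contributes $\bigoplus_u V_{(r+r'-1-2m)\cdot 2}(\sigma^u(\e))$; reindexing, this should collapse (using $r'=1$, and summing the geometric-like telescoping in $m$) to $V_t(\e)\oplus V_t(\chi)$. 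When $t$ is odd, $p=1$, $p'=0$, so $p>p'$ and $p+p'=1\le 2$: now the first sum ($0\le u\le p'-1$) is empty, the second sum over $p'\le u\le p-1$, i.e. $u=0$, with $0\le m\le r$, gives $V_{(r+r'-2m)\cdot 2}(\sigma^0(\e))=V_{(r+1-2m)\cdot 2}(\e)$, and the third and fourth sums over $p\le u\le p+p'-1$ (empty) and $p+p'\le u\le s-1$, i.e. $u=1$, with $0\le m\le r-1$, give $V_{(r+r'-1-2m)\cdot 2}(\chi)=V_{(r-2m)\cdot 2}(\chi)$; summing over $m$ and using $n+t-1-2ms-2u$ where applicable, this should telescope to $V_{t+1}(\e)\oplus V_{t-1}(\chi)$. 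The case $V_3(\e)\ot V_t(\e)$ with $t\ge 3$ is analogous with $n=3$, so $r'=1$, $p'=1$: now $p'=1$ and $p\in\{0,1\}$ depending on parity of $t$, and one has to track the case split $p+p'\le s$ versus $p+p'\ge s+1$ (here $s=2$, so $p+p'=2\le s$ when $p=1$ and $p+p'=1\le s$ when $p=0$, so one always stays in part (1) of Proposition~\ref{5.5}), yielding after simplification $V_{t+2}(\e)\oplus V_{t-2}(\e)\oplus V_t(\chi)$.

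The main obstacle is purely bookkeeping: Proposition~\ref{5.5}(1) is stated with $n\ge t$, whereas in Lemma~\ref{6.16} the ``small'' factor $V_2(\e)$ or $V_3(\e)$ is written first and is typically \emph{smaller} than $V_t(\e)$; so one must either invoke the commutativity $V_n(i)\ot V_t(j)\cong V_t(j)\ot V_n(i)$ (which is part of the statement of Proposition~\ref{5.5}, and also follows from Corollary~\ref{6.2}) to swap the roles and then apply the formula with the genuinely larger index in the $n$-slot, or equivalently apply Proposition~\ref{5.5} in the form already symmetrized. After that swap, the remaining work is to carefully evaluate the (mostly empty or singleton) index ranges for the small values $n\in\{2,3\}$, $s=2$, and to verify that the subscripts $2u-1+|p-t|$-type quantities and the $(r+r'\pm\cdots)s$ quantities indeed reduce to $t\pm 1$, $t\pm 2$, $t$ as claimed, and that $\sigma^u(\e)$ alternates between $\e$ and $\chi$ correctly. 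I expect no conceptual difficulty, only the need for a clean, low-risk specialization; to keep the proof short one may alternatively cite \cite[Theorem 3.15]{H.H} directly for the weight-module computation and then apply Lemma~\ref{5.2}(1) to transfer it, since $V_t(\e)$ and $V_t(\chi)$ are weight modules and $\e,\chi\in\hat{D_n}$.
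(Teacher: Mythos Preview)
Your approach is exactly the paper's: the paper's entire proof is the single line ``By Proposition~\ref{5.5}(1) or \cite[Theorem~3.15]{H.H}'', and you propose to carry out precisely that specialization.

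One point of care: in your sketched bookkeeping you set $n=2$ (or $n=3$) and leave $t$ large, but Proposition~\ref{5.5} is stated under the hypothesis $n\ge t$. With your labeling (so $r'=1$ and $r=\lfloor t/2\rfloor$) the fourth sum ranges over $0\le m\le r-1$ and produces many terms $V_{t-4m}(\sigma^u(\e))$, not just $V_t(\e)\oplus V_t(\chi)$; there is no telescoping. You already identify the fix at the end: first use the commutativity in Proposition~\ref{5.5} (or Corollary~\ref{6.2}) to put the larger index in the $n$-slot, so that $r=1$ and $r'=\lfloor t/2\rfloor$. Then the $m$-range $0\le m\le r-1$ collapses to $m=0$ (parts (2),(3)) or $0\le m\le r=1$ gives exactly two terms (part (4)), and the formulas reduce immediately to $V_t(\e)\oplus V_t(\chi)$, $V_{t+1}(\e)\oplus V_{t-1}(\chi)$, and $V_{t+2}(\e)\oplus V_{t-2}(\e)\oplus V_t(\chi)$ as claimed. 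Just make sure the written-up version does the swap \emph{before} the index computation, not after.
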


Let $y=[V_2(\e)]$ and $z=[V_3(\e)]$ in $r(H)$. Then $y, z\in R$. For any $t\>1$, let $M_t$ be the $G_0(kD_n)$-submodule of $R$ generated by $\{[V_l(\e)]|1\<l\<t\}$. Let $M_{-1}=M_0=0\subset R$. Then $M_{t-1}\subset M_t$ for all $t\>0$.

\begin{corollary}\label{6.17}
Let $t\>1$. Then the following hold:
\begin{enumerate}
\item[(1)] $M_t$ has a $\mathbb Z$-basis $\{[V_l(i)]|i\in I, 1\<l\<t\}$;
\item[(2)] $yM_t\subseteq M_{t+1}$ if $t$ is odd and $yM_t\subseteq M_t$ if $t$ is even;
\item[(3)] $zM_t\subseteq M_{t+2}$.
\end{enumerate}
\end{corollary}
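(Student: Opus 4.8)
The plan is to push everything down to the $G_0(kD_n)$-module generators $[V_l(\e)]$ of the $M_t$ and then read off the required containments from Lemma~\ref{6.16}, using that $r(H)$ is commutative.

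\emph{Part (1).} By Lemma~\ref{5.2}(1) one has $V_i\ot V_l(\e)\cong V_l(i)$, hence $[V_i][V_l(\e)]=[V_l(i)]$ in $r(H)$. Since $\{[V_i]\mid i\in I\}$ is a $\mathbb Z$-basis of $G_0(kD_n)=r(kD_n)$, it follows that $M_t=\sum_{l=1}^t G_0(kD_n)[V_l(\e)]=\sum_{i\in I}\sum_{l=1}^t\mathbb Z[V_l(i)]$. By Corollary~\ref{4.4} the modules $V_l(i)$ for $i\in I$ and $1\<l\<t$ are pairwise non-isomorphic indecomposables, so their classes are part of the standard $\mathbb Z$-basis of $r(H)$ and are therefore $\mathbb Z$-linearly independent; hence $\{[V_l(i)]\mid i\in I,\,1\<l\<t\}$ is a $\mathbb Z$-basis of $M_t$.

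\emph{Parts (2)--(3).} Since $r(H)$ is commutative (Corollary~\ref{6.2}), multiplication by $y$ (resp.\ $z$) commutes with the $G_0(kD_n)$-action, and each $M_{t'}$ is a $G_0(kD_n)$-submodule of $R$; so it suffices to show that $y[V_l(\e)]$, resp.\ $z[V_l(\e)]$, lies in the prescribed $M_{t'}$ for every $1\<l\<t$. I would also repeatedly use $[V_l(\chi)]=[\chi][V_l(\e)]\in M_l$. For (2): Lemma~\ref{6.16}(1) gives $y[V_1(\e)]=[V_2(\e)]\in M_2$; for even $l$, Lemma~\ref{6.16}(2) gives $y[V_l(\e)]=[V_l(\e)]+[V_l(\chi)]\in M_l$; for odd $l\geq3$, Lemma~\ref{6.16}(3) gives $y[V_l(\e)]=[V_{l+1}(\e)]+[V_{l-1}(\chi)]\in M_{l+1}$. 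If $t$ is even, then every odd $l\<t$ satisfies $l\<t-1$, so $y[V_l(\e)]\in M_{l+1}\subseteq M_t$, while for even $l$, $y[V_l(\e)]\in M_l\subseteq M_t$; hence $yM_t\subseteq M_t$. If $t$ is odd, then $M_{l+1}\subseteq M_{t+1}$ for all $l\<t$ and $M_l\subseteq M_{t-1}\subseteq M_{t+1}$ for even $l$; hence $yM_t\subseteq M_{t+1}$. For (3): $z[V_1(\e)]=[V_3(\e)]\in M_3$; the case $l=2$ is not covered by Lemma~\ref{6.16}(4), but by commutativity and Lemma~\ref{6.16}(3), $z[V_2(\e)]=[V_2(\e)\ot V_3(\e)]=[V_4(\e)]+[V_2(\chi)]\in M_4$; and for $l\geq3$, Lemma~\ref{6.16}(4) gives $z[V_l(\e)]=[V_{l+2}(\e)]+[V_{l-2}(\e)]+[V_l(\chi)]\in M_{l+2}$ (note $[V_{l-2}(\e)]\in M_{l-2}$). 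Since $l\<t$ throughout and $l=2$ only occurs when $t\geq2$, all of these land in $M_{t+2}$, giving $zM_t\subseteq M_{t+2}$.

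The computations are essentially mechanical; the only points needing care are the parity split in (2) and the boundary values $l=1,2$ — in particular $l=2$ in (3), where one must commute $V_3(\e)\ot V_2(\e)$ and invoke Lemma~\ref{6.16}(3) instead of Lemma~\ref{6.16}(4). The one conceptual ingredient is that each $[V_l(\chi)]$ already lies in $M_l$, which is precisely why the argument uses the $G_0(kD_n)$-module structure of the $M_t$ rather than only their underlying $\mathbb Z$-module structure. I do not anticipate any genuine obstacle.
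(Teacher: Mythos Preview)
Your proof is correct and follows essentially the same approach as the paper, which simply cites Lemma~\ref{6.15}, Lemma~\ref{5.2}(1), and Lemma~\ref{6.16}; you have just unpacked the details, in particular the parity split in (2) and the boundary case $l=2$ in (3) (which indeed is not covered by Lemma~\ref{6.16}(4) and requires commuting to apply Lemma~\ref{6.16}(3)). The only cosmetic difference is that for linear independence in (1) the paper invokes the freeness statement of Lemma~\ref{6.15} rather than Corollary~\ref{4.4} directly, but these amount to the same thing.
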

\begin{proof}
(1) follows from Lemma \ref{6.15}, Proposition \ref{5.2}(1) and the fact that $\{[V_i]|i\in I\}$ is a $\mathbb Z$-basis of $G_0(kD_n)$. (2) and (3) follows from Lemma \ref{6.16} and (1).
\end{proof}

\begin{lemma}\label{6.18}
The following hold:
\begin{enumerate}
\item[(1)] $y^2=(1+\chi)y$ in $R$ (or $r(H)$);
\item[(2)] $R$ is generated, as a ring, by $G_0(kD_n)\cup\{y,z\}$.
\end{enumerate}
\end{lemma}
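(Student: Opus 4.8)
The plan is to treat the two parts separately, both by direct reduction to the tensor–product decompositions already recorded in Lemma \ref{6.16} (equivalently \cite[Theorem 3.15]{H.H}) together with the ``twisting'' isomorphisms of Lemma \ref{5.2}. For part (1), I would write $y^2=[V_2(\e)\ot V_2(\e)]$ and apply Lemma \ref{6.16}(2) (the case $t=2$, which is even) to get $V_2(\e)\ot V_2(\e)\cong V_2(\e)\oplus V_2(\chi)$, hence $y^2=y+[V_2(\chi)]$. Then Lemma \ref{5.2}(1) gives $V_2(\chi)\cong V_{\chi}\ot V_2(\e)$, so $[V_2(\chi)]=\chi y$ in $r(H)$, and the identity $y^2=(1+\chi)y$ follows at once.

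For part (2), let $R'$ denote the subring of $R$ generated by $G_0(kD_n)\cup\{y,z\}$; the goal is $R'=R$. By Lemma \ref{6.15}, $R$ is a free $G_0(kD_n)$-module on $\{[V_t(\e)]\mid t\>1\}$, so it suffices to show $[V_t(\e)]\in R'$ for every $t\>1$, which I would do by induction on $t$. The cases $t=1,2,3$ are immediate since $[V_1(\e)]=1$, $[V_2(\e)]=y$, $[V_3(\e)]=z$; for $t=4$ I would use Lemma \ref{6.16}(3) with $t=3$, namely $V_2(\e)\ot V_3(\e)\cong V_4(\e)\oplus V_2(\chi)$, to write $[V_4(\e)]=yz-\chi y\in R'$. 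For $t\>5$, apply Lemma \ref{6.16}(4) with $t$ replaced by $t-2$ to obtain $V_3(\e)\ot V_{t-2}(\e)\cong V_t(\e)\oplus V_{t-4}(\e)\oplus V_{t-2}(\chi)$, and then solve for $[V_t(\e)]$:
$$[V_t(\e)]=(z-\chi)[V_{t-2}(\e)]-[V_{t-4}(\e)],$$
after replacing $[V_{t-2}(\chi)]$ by $\chi[V_{t-2}(\e)]$ via Lemma \ref{5.2}(1). Both terms on the right lie in $R'$ by the induction hypothesis, completing the induction and hence the proof that $R=R'$.

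I do not expect a serious obstacle here: the argument is essentially a recursion in the polynomial-like variables $x,y,z$ over $G_0(kD_n)$. The only points requiring care are bookkeeping ones — keeping track of the two parities of $t$ with their separate base cases ($t\le 3$ versus $t=4$), and making sure that every occurrence of a ``$\chi$-twisted'' module $V_t(\chi)$ appearing in Lemma \ref{6.16} is systematically rewritten as $\chi[V_t(\e)]$ using Lemma \ref{5.2}(1) so that the recursion stays inside $R'$. One should also note at the outset that the reduction step is legitimate precisely because of the freeness statement in Lemma \ref{6.15}, so that exhibiting each $[V_t(\e)]$ inside $R'$ genuinely forces $R=R'$.
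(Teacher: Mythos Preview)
Your proposal is correct and follows essentially the same route as the paper: part (1) via Lemma \ref{6.16}(2) with $t=2$ and the twist $[V_2(\chi)]=\chi y$ from Lemma \ref{5.2}(1), and part (2) by the same induction using Lemma \ref{6.15} to reduce to $[V_t(\e)]\in R'$, the base cases $t\le 3$, $[V_4(\e)]=yz-\chi y$ from Lemma \ref{6.16}(3), and the recursion $[V_t(\e)]=(z-\chi)[V_{t-2}(\e)]-[V_{t-4}(\e)]$ from Lemma \ref{6.16}(4).
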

\begin{proof}
(1) It follows from Lemma \ref{6.16}(2) and Proposition \ref{5.2}(1).

(2) Let $R'$ be the subring of $R$ generated by $G_0(kD_n)\cup\{y,z\}$. By Lemma \ref{6.15}, we only need to show $[V_t(\e)]\in R'$ for all $t\>1$. Clearly, $[V_t(\e)]\in R'$ for $1\<t\<3$. By Lemma \ref{6.16}(3) and Proposition \ref{5.2}(1), $[V_4(\e)]=yz-\chi y\in R'$. Now let $t>4$ and assume $[V_l(\e)]\in R'$ for all $1\<l\<t-1$. By Lemma \ref{6.16}(4), $V_3(\e)\ot V_{t-2}(\e)\cong V_t(\e)\oplus V_{t-4}(\e)\oplus V_{t-2}(\chi)$. Hence $[V_t(\e)]=(z-\chi)[V_{t-2}(\e)]-[V_{t-4}(\e)]\in R'$ by Proposition \ref{5.2}(1) and the induction hypothesis.
This completes the proof.
\end{proof}

\begin{lemma}\label{6.19}
Let $t\>0$. Then the following hold:
\begin{enumerate}
\item[(1)] $z^t\equiv[V_{2t+1}(\e)]$ modulo $M_{2t-1}$;
\item[(2)] $yz^t\equiv[V_{2t+2}(\e)]$ modulo $M_{2t}$;
\item[(3)] $\{z^t, yz^t|t\>0\}$ is a $G_0(kD_n)$-basis of $R$.
\end{enumerate}
\end{lemma}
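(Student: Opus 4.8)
The plan is to prove the three claims simultaneously by induction on $t$, using Lemma~\ref{6.16}(4) as the recursion that relates $z\cdot[V_{2t+1}(\e)]$ to $[V_{2t+3}(\e)]$ and lower terms, and Lemma~\ref{6.16}(3)--(4) together with Lemma~\ref{6.18}(1) for the $y$-multiples. For (1), the base case $t=0$ is $z^0=1=[V_1(\e)]$ and $M_{-1}=0$. For the inductive step, assuming $z^{t}\equiv[V_{2t+1}(\e)]\ \mathrm{mod}\ M_{2t-1}$, I multiply by $z=[V_3(\e)]$; by Lemma~\ref{6.16}(4), $V_3(\e)\ot V_{2t+1}(\e)\cong V_{2t+3}(\e)\oplus V_{2t-1}(\e)\oplus V_{2t+1}(\chi)$, so $z\cdot[V_{2t+1}(\e)]\equiv[V_{2t+3}(\e)]\ \mathrm{mod}\ M_{2t+1}$ (the two extra summands lie in $M_{2t+1}$). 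By Corollary~\ref{6.17}(3), $z\,M_{2t-1}\subseteq M_{2t+1}$, so the error term from the induction hypothesis is also absorbed, giving $z^{t+1}\equiv[V_{2t+3}(\e)]\ \mathrm{mod}\ M_{2t+1}$, which is (1) for $t+1$.

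For (2), I multiply the congruence in (1) by $y=[V_2(\e)]$. Since $2t+1$ is odd, Lemma~\ref{6.16}(3) gives $V_2(\e)\ot V_{2t+1}(\e)\cong V_{2t+2}(\e)\oplus V_{2t}(\chi)$, so $y\cdot[V_{2t+1}(\e)]\equiv[V_{2t+2}(\e)]\ \mathrm{mod}\ M_{2t}$. By Corollary~\ref{6.17}(2), $y\,M_{2t-1}\subseteq M_{2t}$ (as $2t-1$ is odd), so again the induction error is absorbed, and I conclude $yz^t\equiv[V_{2t+2}(\e)]\ \mathrm{mod}\ M_{2t}$.

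For (3), the plan is to show $\{z^t,yz^t\mid t\>0\}$ generates $R$ as a $G_0(kD_n)$-module and is $G_0(kD_n)$-linearly independent. Generation: by Lemma~\ref{6.15}, $R$ is free over $G_0(kD_n)$ on $\{[V_t(\e)]\mid t\>1\}$, so it suffices to show each $[V_t(\e)]$ lies in the $G_0(kD_n)$-span of $\{z^u,yz^u\}$; this follows from (1) and (2) by an easy induction on $t$ (the leading term of $z^u$ is $[V_{2u+1}(\e)]$ and of $yz^u$ is $[V_{2u+2}(\e)]$, with lower-order corrections in $M_{2u-1}$ resp.\ $M_{2u}$, which by Corollary~\ref{6.17}(1) have $\mathbb Z$-bases of the $[V_l(i)]$ with smaller $l$, hence lie in the span by induction). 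Independence: suppose a finite $G_0(kD_n)$-combination of the $z^t,yz^t$ vanishes; take the largest index $N$ occurring. If the $z^N$- or $yz^N$-term is nonzero, then by (1)--(2) the combination has a nonzero $[V_{2N+1}(\e)]$- or $[V_{2N+2}(\e)]$-component in the free basis of Lemma~\ref{6.15} (all other terms involve strictly smaller $[V_l(\e)]$), contradicting vanishing; hence all top coefficients vanish, and one descends. The main obstacle is bookkeeping the module-filtration indices $M_{2t-1}$ versus $M_{2t}$ carefully so that Corollary~\ref{6.17}(2)--(3) apply with the correct parity and the induction errors are genuinely absorbed; once the parities are tracked correctly, everything else is routine.
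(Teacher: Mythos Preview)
Your proposal is correct and follows essentially the same route as the paper: induction on $t$ using Lemma~\ref{6.16}(4) for part (1), multiplication by $y$ via Lemma~\ref{6.16}(3) and Corollary~\ref{6.17}(2) for part (2), and the triangular change-of-basis argument against Lemma~\ref{6.15} for part (3). The only minor wrinkle is that your inductive step for (1) invokes Lemma~\ref{6.16}(4) on $V_{2t+1}(\e)$, but that lemma is stated only for indices $\geq 3$; so when $t=0$ you should instead verify $t=1$ directly (which is immediate since $z=[V_3(\e)]$) before applying Lemma~\ref{6.16}(4) for $t\geq 1$, exactly as the paper does by declaring ``trivial for $t=0,1$''.
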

\begin{proof}
(1) It is trivial for $t=0, 1$. Now let $t>1$ and assume $z^{t-1}\equiv[V_{2t-1}(\e)]$ modulo $M_{2t-3}$. Then $z^{t-1}=[V_{2t-1}(\e)]+u$ for some $u\in M_{2t-3}$.
Thus, by Lemma \ref{6.16}(4) and Corollary \ref{6.17}(3),
$$\begin{array}{rcl}
z^t&=&z[V_{2t-1}(\e)]+zu\\
&=&[V_{2t+1}(\e)]+[V_{2t-3}(\e)]+[V_{2t-1}(\chi)]+zu\\
&\equiv&[V_{2t+1}(\e)]\ \text{ modulo } M_{2t-1}.\\
\end{array}$$

(2) By Corollary \ref{6.17}(2), $yM_{2t-1}\subseteq M_{2t}$. Then by (1) and Lemma \ref{6.16}(1, 3),  $yz^t\equiv y[V_{2t+1}(\e)]\equiv[V_{2t+2}(\e)]$ module $M_{2t}$.

(3) It follows from (1), (2) and Lemma \ref{6.15}.
\end{proof}

\begin{proposition}\label{6.20}
Let $G_0(kD_n)[y,z]$ be the polynomial ring in two variables $y, z$ over $G_0(kD_n)$. Then $R\cong G_0(kD_n)[y,z]/(y^2-(1+\chi)y)$, where $(y^2-(1+\chi)y)$ is the ideal of $G_0(kD_n)[y,z]$ generated by $y^2-(1+\chi)y$.
\end{proposition}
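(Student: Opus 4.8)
The plan is to realize $R$ as a quotient of the polynomial ring $G_0(kD_n)[y,z]$ by exhibiting an explicit surjection and then showing its kernel is exactly the principal ideal $(y^2-(1+\chi)y)$. First I would invoke Lemma \ref{6.18}(2): since $R$ is generated as a ring by $G_0(kD_n)\cup\{y,z\}$, the inclusion $G_0(kD_n)\hookrightarrow R$ together with the assignments $y\mapsto [V_2(\e)]$, $z\mapsto [V_3(\e)]$ extends to a ring epimorphism $\psi\colon G_0(kD_n)[y,z]\twoheadrightarrow R$. (This uses that $y$ and $z$ are central in $r(H)$, which follows from Corollary \ref{6.2}, so the universal property of the polynomial ring applies.) By Lemma \ref{6.18}(1) we have $y^2=(1+\chi)y$ in $R$, so $y^2-(1+\chi)y\in\ker\psi$, and hence $\psi$ factors through a ring epimorphism $\ol{\psi}\colon G_0(kD_n)[y,z]/(y^2-(1+\chi)y)\twoheadrightarrow R$.

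It then remains to prove $\ol{\psi}$ is injective, and the natural way is the usual $\mathbb{Z}$-basis comparison argument already used repeatedly in this section (e.g.\ in the proofs of Proposition \ref{6.8} and Theorem \ref{6.14}). The key observation is that in the quotient ring $Q:=G_0(kD_n)[y,z]/(y^2-(1+\chi)y)$, every occurrence of $y^2$ can be rewritten using $y^2=(1+\chi)y$, so $Q$ is generated as a $G_0(kD_n)$-module by $\{z^t, yz^t\mid t\>0\}$. On the other side, Lemma \ref{6.19}(3) says exactly that $\{z^t, yz^t\mid t\>0\}$ (meaning their images $[V_{2t+1}(\e)]$, $[V_{2t+2}(\e)]$ up to the filtration by the $M_t$) is a $G_0(kD_n)$-basis of $R$. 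Since $\ol{\psi}$ sends the spanning set $\{z^t,yz^t\}$ of $Q$ onto the $G_0(kD_n)$-basis $\{z^t,yz^t\}$ of $R$, and $R$ is free over $G_0(kD_n)$ on that set, the spanning set of $Q$ must in fact be a $G_0(kD_n)$-basis of $Q$ and $\ol{\psi}$ carries basis to basis; hence $\ol{\psi}$ is an isomorphism.

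The one point that needs a little care — and is the main (mild) obstacle — is verifying that $Q=G_0(kD_n)[y,z]/(y^2-(1+\chi)y)$ really is generated as a $G_0(kD_n)$-module by $\{z^t,yz^t\mid t\>0\}$, i.e.\ that reducing modulo the single relation $y^2=(1+\chi)y$ suffices to eliminate all monomials $y^iz^j$ with $i\>2$. This is elementary: $y^i z^j=(1+\chi)^{i-1}y z^j$ for $i\>1$ by induction on $i$ using the relation, and $y^0z^j=z^j$, so the monomials $1,y,z,yz,z^2,yz^2,\dots$ span $Q$ over $G_0(kD_n)$; one just has to note $(1+\chi)\in G_0(kD_n)$ so these coefficients stay in the base ring. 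Once this is in hand, the basis comparison goes through verbatim as in Theorem \ref{6.14}, and we conclude $R\cong G_0(kD_n)[y,z]/(y^2-(1+\chi)y)$.
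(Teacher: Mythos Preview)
Your proposal is correct and follows essentially the same approach as the paper: construct the surjection from the polynomial ring using Lemma~\ref{6.18}(2) and Corollary~\ref{6.2}, factor through the quotient using Lemma~\ref{6.18}(1), observe that $\{z^t,yz^t\mid t\ge 0\}$ spans the quotient as a $G_0(kD_n)$-module, and then invoke Lemma~\ref{6.19}(3) to conclude injectivity by a basis comparison. The only difference is that you spell out the inductive reduction $y^i z^j=(1+\chi)^{i-1}yz^j$ explicitly, which the paper leaves implicit.
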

\begin{proof}
By Corollary \ref{6.2} and Lemma \ref{6.18}(2), the ring embedding $G_0(kD_n)\hookrightarrow R$ can be extended to a ring epimorphism $\phi: G_0(kD_n)[y,z]\ra R$ such that $\phi(y)=[V_2(\e)]$ and $\phi(z)=[V_3(\e)]$. By Lemma \ref{6.18}(1), $\phi$ induces a ring epimorphism
$\ol{\phi}: G_0(kD_n)[y,z]/(y^2-(1+\chi)y)\ra R$ such that $\ol{\phi}(\pi(u))=\phi(u)$
for any $u\in G_0(kD_n)[y,z]$, where $\pi: G_0(kD_n)[y,z]\ra G_0(kD_n)[y,z]/(y^2-(1+\chi)y)$ is the canonical epimorphism.
In an obvious way, $G_0(kD_n)[y,z]/(y^2-(1+\chi)y)$ becomes a $G_0(kD_n)$-module. In this case, $\ol{\phi}$ is a $G_0(kD_n)$-module map.
Clearly, $G_0(kD_n)[y,z]/(y^2-(1+\chi)y)$ is generated, as a $G_0(kD_n)$-module, by
$X_4:=\{\pi(z^t), \pi(yz^t)|t\>0\}$. By Lemma \ref{6.19}(3), $\ol{\phi}(X_4)$ is a  $G_0(kD_n)$-basis of $R$. This implies that $\ol{\phi}$ is injective, and so it is a ring isomorphism.
\end{proof}

For any $\b \in k^{\ti}$, let $w_{\b}=[V(\e,\b)]$ in $r(H)$. Then we have the following lemma.

\begin{lemma}\label{6.21}
$r(H)$ is generated, as a ring, by $R\cup\{w_{\b}|\b\in k^{\ti}\}$.
\end{lemma}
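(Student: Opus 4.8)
The plan is to reduce the statement to showing that each class $[V_t(\e,\b)]$ (with $t\in\mathbb{N}^+$, $\b\in k^{\ti}$) lies in the subring $R'$ of $r(H)$ generated by $R\cup\{w_\b\mid\b\in k^{\ti}\}$. Recall that $r(H)$ is the free abelian group on the classes of the indecomposable $H$-modules, which by Corollary~\ref{4.13} in the present dihedral setting are the $[V_t(i)]$ with $i\in I$, $t\in\mathbb{N}^+$, together with the $[V_t(j,\b)]$ with $[j]\in I_0$, $\b\in k^{\ti}$, $t\in\mathbb{N}^+$. The first family lies in $R\subseteq R'$ by the very definition of $R$, so it suffices to show that every $[V_t(j,\b)]$ lies in $R'$.

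First I would invoke Lemma~\ref{5.2}(2): $V_j\ot V_t(\e,\b)\cong V_t(j,\b)$, whence $[V_t(j,\b)]=[V_j]\,[V_t(\e,\b)]$ in $r(H)$, with $[V_j]\in G_0(kD_n)\subseteq R\subseteq R'$. Thus the whole problem collapses to the claim that $[V_t(\e,\b)]\in R'$ for all $t$ and $\b$, which I would prove by induction on $t$.

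For $t=1$ we have $[V_1(\e,\b)]=w_\b\in R'$. For the inductive step I would tensor $V_t(\e,\b)$ with $V_3(\e)$, whose class $z=[V_3(\e)]$ lies in $R$, and apply Proposition~\ref{5.3} with $i=j=\e$ (so $N_{\e,\e}^l=\delta_{l,\e}$) and $s=2$; writing $3=1\cdot 2+1$, the two direct sums of that proposition collapse to
$$V_3(\e)\ot V_1(\e,\b)\cong V_1(\e,\b)\oplus V_2(\e,\b),\qquad V_3(\e)\ot V_t(\e,\b)\cong V_{t-1}(\e,\b)\oplus V_t(\e,\b)\oplus V_{t+1}(\e,\b)\ \ (t\ge 2).$$
Reading these off in $r(H)$ yields $[V_2(\e,\b)]=(z-1)w_\b$ and the recursion $[V_{t+1}(\e,\b)]=(z-1)[V_t(\e,\b)]-[V_{t-1}(\e,\b)]$ for $t\ge 2$. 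Hence each $[V_t(\e,\b)]$ is a polynomial with integer coefficients in $z\in R$ and $w_\b$, so it lies in $R'$; combined with the reduction above this gives $R'=r(H)$.

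The only genuine computation is checking that, with $p=3$ and $s=2$ (so $u=1$, $r=1$, $s-r=1$), the minima and absolute values indexing the two sums in Proposition~\ref{5.3} really do leave exactly the two, respectively three, summands claimed above; this is the step where an off-by-one slip is easiest, but it is a mechanical evaluation. Everything else is formal manipulation in the free abelian group $r(H)$.
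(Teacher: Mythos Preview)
Your proof is correct and follows essentially the same route as the paper: reduce via Lemma~\ref{5.2}(2) to showing $[V_t(\e,\b)]\in R'$, then induct on $t$ using the decomposition $V_3(\e)\ot V_t(\e,\b)\cong V_{t-1}(\e,\b)\oplus V_t(\e,\b)\oplus V_{t+1}(\e,\b)$ from Proposition~\ref{5.3} to get the recursion $[V_{t+1}(\e,\b)]=(z-1)[V_t(\e,\b)]-[V_{t-1}(\e,\b)]$. The only cosmetic difference is that the paper handles all $t\geqslant 1$ uniformly by declaring $V_0(\e,\b)=0$, whereas you split off the $t=1$ case explicitly.
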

\begin{proof}
Let $R'$ be the subring of $r(H)$ generated by $R\cup\{w_{\b}|\b\in k^{\ti}\}$.
Then $G_0(kD_n)\subset R\subset R'$. By Lemma \ref{5.2}(2) and the classification of finite dimensional indecomposable $H$-modules, it is enough to show that
$[V_t(\e,\b)]\in R'$ for all $t\>1$ and $\b\in k^{\ti}$.
We prove it by induction on $t$. For $t=1$, it is trivial. Now assume $t\>1$ and assume $[V_{l}(\e,\b)]\in R'$ for all $1\<l\<t$ and $\b\in k^{\ti}$. By Proposition \ref{5.3} (or \cite[Theorem 3.6]{H.H}),
$V_3(\e)\ot V_t(\e,\b)\cong V_t(\e,\b)\oplus V_{t-1}(\e,\b)\oplus V_{t+1}(\e,\b)$, where $V_0(\e,\b)=0$. Hence $[V_{t+1}(\e,\b)]=(z-1)[V_t(\e, \b)]-[V_{t-1}(\e, \b)]\in R'$.
\end{proof}

\begin{lemma}\label{6.22}
Let $\a, \b\in k^{\ti}$ with $\a\neq-\b$. Then the following hold in $r(H)$:
$$\chi w_{\b}=w_{\b};\ w_{\a}w_{\b}=2w_{\a+\b};\ w_{\b}w_{-\b}=(1+\chi)y;\ yw_{\b}=2w_{\b}.$$
\end{lemma}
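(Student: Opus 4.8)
The plan is to derive all four identities in $r(H)$ from the decomposition rules of Section 5 (which reduce everything, via Lemma \ref{5.2}, to computations in $r_w(H)$, i.e. to \cite{H.C} and \cite{H.H}), together with the basic structural facts established earlier. Throughout I would use that $\om_\e=1$ and $s=|\chi|=2$, so $\om_\e^s=1$ and the scalars attached to the modules $V(\e,\cdot)$ behave additively: $V(\e,\a)\ot V(\e,\b)$ should contribute parameter $\a+\b$.

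First, for $\chi w_{\b}=w_{\b}$: by Lemma \ref{5.2}(2), $V_\chi\ot V(\e,\b)\cong V_1(\chi,\b)=V(\chi,\b)$, and by Proposition \ref{3.4} (with $\chi\in\hat{D_n}\subseteq I$ and $[\chi]=[\e]$ in $I_0$, since $V_\chi\cong V_{\chi^1}\ot V_\e$) we get $V(\chi,\b)\cong V(\e,\b)$, hence $\chi w_{\b}=w_{\b}$ in $r(H)$. Second, for $w_{\a}w_{\b}=2w_{\a+\b}$ with $\a\neq-\b$: apply Proposition \ref{5.4} with $p=t=1$, $i=j=\e$, $\a,\b$ as given; since $N_{\e,\e}^l=\d_{l,\e}$, $s=2$, and $\om_\e^s\a+\b=\a+\b\neq0$, the sum collapses to $\oplus_{m=0}^{1}V_{1}(\s^m(\e),\a+\b)\cong V(\e,\a+\b)\oplus V(\chi,\a+\b)\cong 2V(\e,\a+\b)$, using again $V(\chi,\a+\b)\cong V(\e,\a+\b)$; this gives $w_{\a}w_{\b}=2w_{\a+\b}$. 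Third, for $w_{\b}w_{-\b}=(1+\chi)y$: take $\a=-\b$ in Proposition \ref{5.4}. Now $\om_\e^s\a+\b=-\b+\b=0$, so the summands are $V_{s(2u-1+|p-t|)}(\s^m(l))=V_2(\s^m(\e))$ for $m=0,1$, i.e. $V_2(\e)\oplus V_2(\chi)$. Hence $w_{\b}w_{-\b}=[V_2(\e)]+[V_2(\chi)]$. To rewrite this as $(1+\chi)y$, note $V_2(\chi)\cong V_\chi\ot V_2(\e)$ by Lemma \ref{5.2}(1), so $[V_2(\chi)]=\chi y$ in $r(H)$; thus $w_{\b}w_{-\b}=y+\chi y=(1+\chi)y$. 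Fourth, for $yw_{\b}=2w_{\b}$: apply Proposition \ref{5.3} with $p=2$, $t=1$, $i=j=\e$, so $p=us+r$ with $s=2$ gives $u=1$, $r=0$; then $s-r=2$ and the second block vanishes, leaving $\oplus_{1\le m\le\min(1,1)}N_{\e,\e}^l\cdot2\cdot V_{2m-1+|1-1|}(l,\b)=2V_1(\e,\b)=2V(\e,\b)$, so $yw_{\b}=2w_{\b}$.

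I expect the only real care needed is bookkeeping: correctly substituting the parameters into the (notation-heavy) formulas of Propositions \ref{5.3}--\ref{5.4}, keeping track that $\om_\e^s=1$ so that the $\om^s$-twists disappear, and handling the $\a=-\b$ degenerate case where $V(\e,0)$ must be reinterpreted as $V_2(\e)$ via Remark \ref{4.1}. None of these is a genuine obstacle — each identity is a one-line specialization of an already-proved decomposition rule — so the proof is essentially a short verification, and I would present it in exactly the order above, invoking Lemma \ref{5.2}, Proposition \ref{3.4}, and Propositions \ref{5.3}--\ref{5.4} as needed.
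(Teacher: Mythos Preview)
Your proposal is correct and follows essentially the same approach as the paper: the paper simply refers back to the argument of Lemma~\ref{6.11} (which is exactly your use of Lemma~\ref{5.2}, Proposition~\ref{3.4}, and Proposition~\ref{5.4}) for the first three identities, and invokes Proposition~\ref{5.3} for $yw_\b=2w_\b$, precisely as you do. Your write-up is more explicit than the paper's, but the method is identical.
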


\begin{proof}
The first three equations follow from an argument similar to the proofs of Lemma \ref{6.11} and Lemma \ref{5.2}(1). By Proposition \ref{5.3} (or \cite[Theorem 3.6]{H.H}),
$V_2(\e)\ot V(\e,\b)\cong 2V(\e,\b)$, and hence $yw_{\b}=2w_{\b}$.
\end{proof}

Let $P$ be the $\mathbb Z$-submodule of $r(H)$ generated by $\{[V_t(i,\b)]|t\>1, i\in I, \b\in k^{\ti}\}$. Then $r(H)=R\oplus P$ and $P$ has a $\mathbb Z$-basis $\{[V_t(i,\b)]|t\>1, i\in I_0, \b\in k^{\ti}\}$.

For $t\>1$ and $\b\in k^{\ti}$, let $P^{\b}$ and $P^{\b}_t$ be the $\mathbb Z$-submodules of $r(H)$ generated by $\{[V_l(i,\b)]|l\>1, i\in I\}$ and $\{[V_l(i,\b)]|1\<l\<t, i\in I\}$, respectively. Then $P^{\b}$ and $P^{\b}_t$ have the $\mathbb Z$-bases $\{[V_l(i,\b)]|l\>1, i\in I_0\}$ and $\{[V_l(i,\b)]|1\<l\<t, i\in I_0\}$, respectively. Clearly, $P^{\b}_t\subset P^{\b}_{t+1}$, $P^{\b}=\sum_{t\>1}P^{\b}_t=\cup_{t\>1}P^{\b}_t$ and $P=\oplus_{\b\in k^{\ti}}P^{\b}$.
By Proposition \ref{5.3}, $P$ and $P^{\b}$ are $R$-submodules of $r(H)$.

Let $t\>1$ and $\b\in k^{\ti}$. By Lemma \ref{5.2}(2), $P^{\b}_t$ is a $G_0(kD_n)$-module generated by $\{[V_l(\e,\b)]|1\<l\<t\}$. By Proposition \ref{5.3}, $V_3(\e)\ot V_t(i,\b)\cong V_t(i,\b)\oplus V_{t-1}(i,\b)\oplus V_{t+1}(i,\b)$ for any $i\in I$, where $V_0(i,\b)=0$. Hence $zP^{\b}_t\subseteq P^{\b}_{t+1}$. We claim that
$$z^lw_{\b}\equiv [V_{l+1}(\e,\b)] \text{ modulo } P^{\b}_l, \forall l\>1.$$
For $l=1$, $zw_{\b}=[V_2(\e, \b)]+[V_1(\e, \b)]\equiv[V_2(\e, \b)]$ modulo $P^{\b}_1$. Let $l>1$ and assume $z^{l-1}w_{\b}\equiv[V_l(\e, \b)]$ modulo $P^{\b}_{l-1}$. Then $z^{l-1}w_{\b}=[V_l(\e, \b)]+u$ for some $u\in P^{\b}_{l-1}$. Hence
$z^lw_{\b}=z[V_l(\e,\b)]+zu=[V_{l+1}(\e,\b)]+[V_l(\e,\b)]+[V_{l-1}(\e,\b)]+zu
\equiv [V_{l+1}(\e,\b)]$ modulo $P^{\b}_l$. Thus, we have shown the claim. Therefore,
$P^{\b}_t$ is generated, as a $G_0(kD_n)$-module, by $\{z^lw_{\b}|0\<l\<t-1\}$.
Then by Lemmas \ref{6.12} and \ref{6.22}, $P^{\b}_t$ is generated, as a $\mathbb Z$-module, by $X^{\b}_t:=\{\l z^lw_{\b}, x^iz^lw_{\b}|0\<i\<\frac{m-1}{2},0\<l\<t-1\}$.
Since $\sharp X^{\b}_t=\sharp\{[V_l(i,\b)]|1\<l\<t, i\in I_0\}$, $X^{\b}_t$ is also a $\mathbb Z$-basis of $P^{\b}_t$. It follows that $X^{\b}:=\{\l z^lw_{\b}, x^iz^lw_{\b}|0\<i\<\frac{m-1}{2}, l\>0\}$ is a $\mathbb Z$-basis of $P^{\b}$.
Summarizing the above discussion, we have the following lemma.

\begin{lemma}\label{6.23} $P$ has a $\mathbb{Z}$-basis
$B_P:=\{\l z^tw_{\b}, x^lz^tw_{\b}|0\<l\<\frac{m-1}{2}, t\>0, \b\in k^{\ti}\}$.
\end{lemma}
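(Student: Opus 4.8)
The plan is to prove Lemma \ref{6.23} by assembling the intermediate facts that have already been established in the text, so the argument is mostly a matter of bookkeeping over the direct-sum decomposition $P=\oplus_{\b\in k^{\ti}}P^{\b}$. First I would recall that $r(H)=R\oplus P$ as $\mathbb Z$-modules, with $P$ the span of $\{[V_t(i,\b)]\mid t\>1,\ i\in I,\ \b\in k^{\ti}\}$ and, by Proposition \ref{4.12} together with the relations $V_t(\chi,\b)\cong V_t(\e,\b)$, $V_t(\l\chi,\b)\cong V_t(\l,\b)$, $V_t(j,\b)\cong V_t(m-j,\b)$, a $\mathbb Z$-basis given by $\{[V_t(i,\b)]\mid t\>1,\ i\in I_0,\ \b\in k^{\ti}\}$. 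Since these indecomposables for distinct values of $\b$ never coincide, the decomposition $P=\oplus_{\b}P^{\b}$ is clear, so it suffices to produce a $\mathbb Z$-basis of each $P^{\b}$ and then take the union over $\b\in k^{\ti}$.

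Next I would fix $\b\in k^{\ti}$ and work inside the filtration $P^{\b}_1\subset P^{\b}_2\subset\cdots$, where $P^{\b}=\cup_{t\>1}P^{\b}_t$. The key structural input is Lemma \ref{5.2}(2): $V_i\ot V(\e,\b)\cong V(i,\b)$ and more generally $V_i\ot V_t(\e,\b)\cong V_t(i,\b)$, which shows that $P^{\b}_t$ is a $G_0(kD_n)$-module generated by $\{[V_l(\e,\b)]\mid 1\<l\<t\}$. Then Proposition \ref{5.3} applied to $V_3(\e)\ot V_t(i,\b)\cong V_t(i,\b)\oplus V_{t-1}(i,\b)\oplus V_{t+1}(i,\b)$ (with the convention $V_0(i,\b)=0$) gives both $zP^{\b}_t\subseteq P^{\b}_{t+1}$ and, by an easy induction on $l$, the congruence $z^lw_{\b}\equiv [V_{l+1}(\e,\b)]$ modulo $P^{\b}_l$ for all $l\>1$ (the base case $l=1$ being $zw_{\b}=[V_2(\e,\b)]+[V_1(\e,\b)]$). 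From this congruence it follows that $\{z^lw_{\b}\mid 0\<l\<t-1\}$ generates $P^{\b}_t$ as a $G_0(kD_n)$-module. Combining this with the $\mathbb Z$-basis $X_2=\{1,\l,\chi,\l\chi,x^l,\chi x^l\mid 1\<l\<\frac{m-1}{2}\}$ of $G_0(kD_n)$ from Lemma \ref{6.12} and the relation $\chi w_{\b}=w_{\b}$ from Lemma \ref{6.22} (which collapses $\chi x^l z^l w_{\b}$ back to $x^l z^l w_{\b}$ and $\chi z^l w_\b$ back to $z^l w_\b$), one sees that $P^{\b}_t$ is generated as a $\mathbb Z$-module by $X^{\b}_t:=\{\l z^lw_{\b},\ x^iz^lw_{\b}\mid 0\<i\<\frac{m-1}{2},\ 0\<l\<t-1\}$.

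The final step is a counting argument to upgrade "spanning" to "basis". The set $X^{\b}_t$ has $(\frac{m-1}{2}+2)\cdot t$ elements, which is exactly $\sharp\{[V_l(i,\b)]\mid 1\<l\<t,\ i\in I_0\}=\sharp I_0\cdot t$, the rank of $P^{\b}_t$; since a spanning set of a free $\mathbb Z$-module of finite rank with cardinality equal to that rank is automatically a basis, $X^{\b}_t$ is a $\mathbb Z$-basis of $P^{\b}_t$. Letting $t\to\infty$ shows $X^{\b}:=\{\l z^lw_{\b},\ x^iz^lw_{\b}\mid 0\<i\<\frac{m-1}{2},\ l\>0\}$ is a $\mathbb Z$-basis of $P^{\b}$, and taking the union over $\b\in k^{\ti}$ using $P=\oplus_{\b}P^{\b}$ gives the claimed basis $B_P=\{\l z^tw_{\b},\ x^lz^tw_{\b}\mid 0\<l\<\frac{m-1}{2},\ t\>0,\ \b\in k^{\ti}\}$ of $P$. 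The one place where care is needed — and the main (mild) obstacle — is making sure the cardinality count is exact: one must verify that the congruence $z^lw_\b\equiv[V_{l+1}(\e,\b)]$ produces no extra linear dependencies among the $x^i z^l w_\b$, which is precisely what the equality $\sharp X^{\b}_t=\sharp I_0\cdot t$ guarantees, so the argument is self-contained once Lemmas \ref{6.12} and \ref{6.22} and Proposition \ref{5.3} are in hand. Essentially all of this reasoning has already been carried out in the paragraph preceding the lemma, so the proof can simply read: "Summarizing the above discussion, the lemma follows," or repeat the filtration-and-counting argument in compressed form.
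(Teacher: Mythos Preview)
Your proposal is correct and follows precisely the same route as the paper: the argument you outline---the decomposition $P=\oplus_{\b}P^{\b}$, the filtration $P^{\b}_t$, the congruence $z^lw_{\b}\equiv[V_{l+1}(\e,\b)]$ modulo $P^{\b}_l$ obtained from Proposition \ref{5.3}, the reduction of the $G_0(kD_n)$-generators via Lemma \ref{6.12} and the relation $\chi w_{\b}=w_{\b}$, and the final cardinality count---is exactly the content of the paragraph immediately preceding Lemma \ref{6.23}, and the paper's proof is indeed just ``summarizing the above discussion.''
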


\begin{theorem}\label{6.24}
Let $R[Z]$ be the polynomial ring in the variables $Z=\{w_{\b}|\b \in k^{\ti}\}$ over $R$. Let $(W)$ be the ideal of $R[Z]$ generated by
$$W:=\left\{
\begin{array}{c}
\chi w_{\b}-w_{\b},\ w_{\a}w_{\b}-2w_{\a+\b},\\
w_{\b}w_{-\b}-(1+\chi)y,\ yw_{\b}-2w_{\b}\\
\end{array}\right|
\left.\begin{array}{c}
\a, \b\in k^{\ti}\\
\text{with } \a\neq\b\\
\end{array}\right\}.$$
Then $r(H)$ is isomorphic to the factor ring $R[Z]/(W)$.
\end{theorem}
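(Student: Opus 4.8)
The plan is to mimic exactly the structure of the proof of \thref{6.14} (the Grothendieck-ring case), since $r(H)$ decomposes as $R\oplus P$ in the same way $G_0(H)$ decomposed as $G_0(kD_n)\oplus L$. First I would invoke Corollary \ref{6.2} and Lemma \ref{6.21} to extend the ring embedding $R\hookrightarrow r(H)$ to a ring epimorphism $\phi: R[Z]\ra r(H)$ determined by $\phi(w_{\b})=[V(\e,\b)]$ for all $\b\in k^{\ti}$. Lemma \ref{6.22} shows that every generator of $W$ is killed by $\phi$, so $\phi$ factors through a ring epimorphism $\ol{\phi}: R[Z]/(W)\ra r(H)$, $\ol{\phi}(\pi(u))=\phi(u)$, where $\pi: R[Z]\ra R[Z]/(W)$ is the canonical projection.

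Next I would show $\ol{\phi}$ is injective by exhibiting a spanning set of $R[Z]/(W)$ of the "right size" and checking $\ol{\phi}$ sends it to a $\mathbb Z$-basis of $r(H)$. From the relations in $W$ one sees that $R[Z]/(W)=\pi(R)+\sum_{\b\in k^{\ti}}\pi(R w_{\b})$: indeed $\chi w_{\b}=w_{\b}$, $yw_{\b}=2w_{\b}$ and $w_{\a}w_{\b}=2w_{\a+\b}$ (together with $w_{\b}w_{-\b}=(1+\chi)y\in\pi(R)$) collapse all products of two or more $w$'s back into $\pi(R w_{\g})$ for a single $\g$. Using Lemma \ref{6.15} ($R$ is free over $G_0(kD_n)$ on $\{[V_t(\e)]\}_{t\ge1}$, i.e. on $\{\pi(z^t),\pi(yz^t)\}$ modulo $y^2=(1+\chi)y$ via Proposition \ref{6.20} and Lemma \ref{6.19}), together with $yw_{\b}=2w_{\b}$ which kills the $yz^t$-part when multiplied into $w_{\b}$, each $\pi(R w_{\b})$ is spanned over $G_0(kD_n)$ by $\{\pi(z^t w_{\b})\}_{t\ge0}$; and then by Lemma \ref{6.12} ($X_2$ is a $\mathbb Z$-basis of $G_0(kD_n)$) it is spanned over $\mathbb Z$ by $\{\pi(\l z^t w_{\b}),\pi(x^l z^t w_{\b})\mid 0\le l\le\frac{m-1}{2},\,t\ge0\}$. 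Hence $R[Z]/(W)$ is spanned as a $\mathbb Z$-module by $\pi(B_R)\cup\bigcup_{\b}\pi(X^{\b})$, where $B_R=\{x^l, \chi x^l, \l z^t, x^l z^t, \l y z^t, x^l y z^t\}$ is the evident $\mathbb Z$-basis of $R$ coming from Lemmas \ref{6.12}, \ref{6.15} and \ref{6.19}, and $X^{\b}=\{\l z^t w_{\b}, x^l z^t w_{\b}\mid 0\le l\le\frac{m-1}{2},\,t\ge0\}$.

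Finally I would check that $\ol{\phi}$ carries this spanning set bijectively onto a $\mathbb Z$-basis of $r(H)$: on the $R$-part $\ol{\phi}$ restricts to the identity of $R$, and on $\pi(X^{\b})$ we have $\ol{\phi}(\l z^t w_{\b})=[V_{?}(\l,\b)]$-type elements and $\ol{\phi}(x^l z^t w_{\b})\equiv[V_{2t+1+\cdots}(l,\b)]$ modulo lower $P^{\b}_{\bullet}$-terms, which is precisely the change-of-basis computation already carried out in the paragraph preceding Lemma \ref{6.23} showing $B_P$ is a $\mathbb Z$-basis of $P$. Since $r(H)=R\oplus P$ and $\ol{\phi}(\pi(B_R)\cup\bigcup_\b\pi(X^\b))$ is the disjoint union of the $\mathbb Z$-basis of $R$ and the $\mathbb Z$-basis $B_P$ of $P$ (up to the harmless reindexing identifying $X^\b$ with $B_P$ via Lemma \ref{6.23}), the map $\ol{\phi}$ sends a spanning set to a $\mathbb Z$-basis; therefore that spanning set is itself a $\mathbb Z$-basis of $R[Z]/(W)$ and $\ol{\phi}$ is a ring isomorphism. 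The main obstacle is the bookkeeping in the second paragraph: one must be careful that the relation $yw_\b=2w_\b$ really does eliminate every occurrence of the "$yz^t$" basis vectors of $R$ after multiplication by a $w_\b$, so that $\pi(Rw_\b)$ has exactly the expected rank and no collapse beyond what the basis $B_P$ of $P$ predicts; this is exactly where the count $\sharp X^\b_t=\sharp\{[V_l(i,\b)]\mid 1\le l\le t,\ i\in I_0\}$ must be matched against the image, and getting the indexing of $z^t w_\b$ versus $V_{t+1}(\e,\b)$ right (the claim displayed just before Lemma \ref{6.23}) is the crux.
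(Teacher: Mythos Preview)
Your proposal is correct and follows essentially the same approach as the paper's proof: extend $R\hookrightarrow r(H)$ to $\phi:R[Z]\to r(H)$ via Lemma~\ref{6.21}, factor through $(W)$ using Lemma~\ref{6.22}, reduce $R[Z]/(W)$ to $\pi(R)+\sum_\b\pi(Rw_\b)$, and then compare a $\mathbb Z$-spanning set (built from the $G_0(kD_n)$-basis $X_2$ of Lemma~\ref{6.12} and the $\{z^t,yz^t\}$-basis of Lemma~\ref{6.19}) with the $\mathbb Z$-basis $B_R\cup B_P$ of $r(H)$ furnished by Lemma~\ref{6.23}. Your explicit listing of $B_R$ is slightly garbled (it should be $\{rz^t,\,ryz^t\mid r\in X_2,\ t\ge0\}$, so in particular the $r\in\{\chi,\l\chi\}$ cases appear), but since you correctly cite Lemmas~\ref{6.12} and~\ref{6.19}(3) as the source this is only a transcription slip, not a gap.
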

\begin{proof}
By Lemma \ref{6.21}, the ring embedding $R\hookrightarrow r(H)$ can be extended to a ring epimorphism $\phi: R[Z]\ra r(H)$ such that $\phi(w_{\b})=[V(\e, \b)]$ for all $\b\in k^{\ti}$. By Lemma \ref{6.22}, $\phi$ induces a ring epimorphism
$\ol{\phi}: R[Z]/(W)\ra r(H)$ such that $\ol{\phi}(\pi(u))=\phi(u)$
for any $u\in R[Z]$, where $\pi: R[Z]\ra R[Z]/(W)$ is the canonical epimorphism.
By the definition of $W$, $R[Z]/(W)=\pi(R)+\sum_{\b\in k^{\ti}}\pi(Rw_{\b})$.
Let $X_2$ be the $\mathbb Z$-basis of $G_0(kD_n)$ given in Lemma \ref{6.12}.
Then by Lemma \ref{6.19}(3), $R$ has a $\mathbb Z$-basis $B_R:=\{rz^t, ryz^t|r\in X_2, t\>0\}$. Again by the definition of $W$, $\sum_{\b\in k^{\ti}}\pi(Rw_{\b})$ is generated, as a $\mathbb Z$-module, by
$$S_R:=\{\pi(\l z^tw_{\b}), \pi(x^lz^tw_{\b})|0\<l\<\frac{m-1}{2}, t\>0, \b\in k^{\ti}\}.$$
Hence $R[Z]/(W)$ is generated, as a $\mathbb Z$-module, by $B:=\pi(B_R)\cup S_R$.
From $\ol{\phi}(\pi(w_{\b}))=[V(\e, \b)]$ and $\ol{\phi}(\pi(r))=r$ for any $r\in R$, one can check that $\ol{\phi}(a)\neq\ol{\phi}(b)$ for any $a, b\in B$ with $a\neq b$,
and that $\ol{\phi}(B)=B_R\cup B_P$, which is a $\mathbb Z$-basis of $r(H)$ by Lemma \ref{6.23}. It follows that $\ol{\phi}$ is a ring isomorphism.
\end{proof}

Let $X:=\{x, y, z, w_{\b}|\b\in k^{\ti}\}$ and ${\mathbb Z}\hat{D_n}[X]$ the polynomial ring in variables $X$ over ${\mathbb Z}\hat{D_n}$.
Let $(Q)$ be the ideal of ${\mathbb Z}\hat{D_n}[X]$ generated by the following set
$$Q:=\left\{\left.
\begin{array}{c}
\chi x-f(x),\ x^m-g(x),\ y^2-(1+\chi)y,\\
\l x-x,\ \chi w_{\b}-w_{\b},\ yw_{\b}-2w_{\b},\\
 w_{\a}w_{\b}-2w_{\a+\b},\ w_{\b}w_{-\b}-(1+\chi)y\\
\end{array}\right|
\begin{array}{c}
\a, \b\in k^{\ti}\\
\text{with } \a\neq\b\\
\end{array}\right\},$$
where $f(x), g(x)\in {\mathbb Z}\hat{D_n}[x]\subset {\mathbb Z}\hat{D_n}[X]$ are given before Proposition \ref{6.8}. Then by Propositions \ref{6.8}, \ref{6.20} and Theorem \ref{6.24}, one gets the following corollary.

\begin{corollary}
$r(H)$ is isomorphic to the factor ring ${\mathbb Z}\hat{D_n}[X]/(Q)$.
\end{corollary}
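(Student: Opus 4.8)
The plan is to assemble the stated presentation by composing the three ring isomorphisms already established: $G_0(kD_n)\cong{\mathbb Z}\hat{D_n}[x]/J$ (Proposition \ref{6.8}), $R\cong G_0(kD_n)[y,z]/(y^2-(1+\chi)y)$ (Proposition \ref{6.20}), and $r(H)\cong R[Z]/(W)$ (Theorem \ref{6.24}). The one extra ingredient needed is the elementary fact that adjoining polynomial variables commutes with forming quotients: for a commutative ring $A$, an ideal $I\subseteq A$ and new variables $\ul{t}$, one has $(A/I)[\ul{t}]\cong A[\ul{t}]/IA[\ul{t}]$, and a further quotient by the class of an element $s\in A[\ul{t}]$ gives $A[\ul{t}]/(IA[\ul{t}]+(s))$.

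Unwinding the three quotients with this compatibility, one lands in the single polynomial ring ${\mathbb Z}\hat{D_n}[X]$ modulo the ideal generated by three batches of elements: (i) the three generators of $J$, namely $\l x-x$, $\chi x-f(x)$ and $x^m-g(x)$; (ii) $y^2-(1+\chi)y$; and (iii) the lifts of the four defining relations in $W$. For (iii) the point to check is that every element of $W$ is a polynomial in $y$ and the $w_\b$ alone, and all of $y$ and the $w_\b$ are already among the variables $X$; hence the lift of $W$ to ${\mathbb Z}\hat{D_n}[X]$ is literally the set $\{\chi w_\b-w_\b,\ w_\a w_\b-2w_{\a+\b},\ w_\b w_{-\b}-(1+\chi)y,\ yw_\b-2w_\b\}$. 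Collecting (i)--(iii), the resulting ideal is exactly $(Q)$, whence $r(H)\cong{\mathbb Z}\hat{D_n}[X]/(Q)$.

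Alternatively, and more in the spirit of the proofs of Propositions \ref{6.8} and \ref{6.20} and Theorem \ref{6.24}, one can argue directly. By Corollary \ref{6.2}, Corollary \ref{6.6}(2), Lemma \ref{6.18}(2) and Lemma \ref{6.21}, the ring $r(H)$ is generated by ${\mathbb Z}\hat{D_n}$ together with $x=[V_1]$, $y=[V_2(\e)]$, $z=[V_3(\e)]$ and the $w_\b=[V(\e,\b)]$, so the embedding ${\mathbb Z}\hat{D_n}\hookrightarrow r(H)$ extends to a ring epimorphism $\Phi:{\mathbb Z}\hat{D_n}[X]\ra r(H)$. One checks $\Phi(Q)=0$ using Lemma \ref{6.5}(1), Corollary \ref{6.7}, Lemma \ref{6.18}(1) and Lemma \ref{6.22}, so $\Phi$ factors through a ring epimorphism $\ol{\Phi}:{\mathbb Z}\hat{D_n}[X]/(Q)\ra r(H)$. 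One then shows that ${\mathbb Z}\hat{D_n}[X]/(Q)$ is spanned over $\mathbb Z$ by the preimages of $B_R\cup B_P$, where $B_R=\{rz^t,\,ryz^t\mid r\in X_2,\ t\>0\}$ is the $\mathbb Z$-basis of $R$ from Lemma \ref{6.19}(3) and $B_P$ is the $\mathbb Z$-basis of $P$ from Lemma \ref{6.23}: the relations $\l x-x$, $\chi x-f(x)$, $x^m-g(x)$ collapse the $x$-part onto $X_2$ as in Lemma \ref{6.12}, the relation $y^2=(1+\chi)y$ eliminates higher powers of $y$, and the four relations among the $w_\b$ reduce every product that involves a $w_\b$. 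Since $\ol{\Phi}$ carries this spanning set bijectively onto the $\mathbb Z$-basis $B_R\cup B_P$ of $r(H)$, it is a $\mathbb Z$-module isomorphism, hence a ring isomorphism.

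The work here is bookkeeping rather than new ideas, and I expect the main obstacle to be precisely that bookkeeping. In the composition route the delicate point is keeping track of the element $y$ occurring inside the relations $w_\b w_{-\b}-(1+\chi)y$ and $yw_\b-2w_\b$, so that the three batches of relations genuinely add up to $(Q)$ and nothing is lost or double-counted when moving between the three presentations. In the direct route the obstacle is verifying that the reduction rules coming from $Q$ really do bring an arbitrary element of ${\mathbb Z}\hat{D_n}[X]$ into the span of $B_R\cup B_P$ without creating any new $\mathbb Z$-linear dependency --- which is exactly what the cardinality counts behind Lemmas \ref{6.12}, \ref{6.19} and \ref{6.23} are there to guarantee.
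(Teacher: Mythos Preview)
Your proposal is correct and follows exactly the approach the paper intends: the paper's proof consists solely of the sentence ``Then by Propositions \ref{6.8}, \ref{6.20} and Theorem \ref{6.24}, one gets the following corollary,'' so your composition route --- using that adjoining polynomial variables commutes with quotients to stack the three presentations --- is precisely what is being invoked, only spelled out in detail. Your alternative direct argument is also sound and simply replays the strategy of those three proofs in one pass, but it is more than the paper asks for.
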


\centerline{ACKNOWLEDGMENTS}

This work is supported by NNSF of China (No. 12071412).\\

\end{document}